\newcommand{\kk}{\Bbbk}
\newcommand{\kv}{{\kk[V]}}
\newcommand{\kvg}{{\kk[V]^{G}}}
\def\sep{\operatorname{sep}}
\newcommand{\bsep}{\beta_{\sep}}
\newcommand{\NGV}{{\mathcal{N}_{G,V}}}
\def\SL{\operatorname{SL}}
\def\reg{\operatorname{reg}}
\def\SL2{\operatorname{SL}_{2}(K)}
\def\GL2{\operatorname{GL}_{2}(K)}
\def\Ga{{\mathbb G}_{a}}
\def\Gm{{\mathbb G}_{m}}
\def\INVSL2{$K[V]^{operatorname{SL}_{2}(K)}$}
\def\INVSO2{$K[V]^{operatorname{SO}_{2}(K)}$}
\def\INVGL2{$K[V]^{operatorname{GL}_{2}(K)}$}
\def\Magma{{\sc Magma }}
\def\GL{\operatorname{GL}}
\def\SL{\operatorname{SL}}
\def\id{\operatorname{id}}
\def\Stab{\operatorname{Stab}}
\def\Ind{\operatorname{Ind}}
\def\Z{\mathbb{Z}}
\def\N{\mathbb{N}}
\def\Aut{\operatorname{Aut}}
\def\ord{\operatorname{ord}}
\newtheorem{Lemma}{Lemma}[section]
\newtheorem{Theorem}[Lemma]{Theorem}
\newtheorem{Corollary}[Lemma]{Corollary}
\newtheorem{Prop}[Lemma]{Proposition}
\newtheorem{Conj}[Lemma]{Conjecture}
\theoremstyle{definition}
\theoremstyle{remark}
  \newtheorem{rem}[Lemma]{Remark}
\newtheorem{eg}[Lemma]{Example}
\newtheoremstyle{Acknowledgments}
  {}
    {}
     {}
     {}
    {\bfseries}
    {}
     {.5em}
     {\thmname{#1}\thmnumber{ }\thmnote{ (#3)}}
\theoremstyle{Acknowledgments}
\newtheorem{ack}{Acknowledgments.}
\title[Zero-separating invariants]
{Zero-separating invariants for finite groups}
\author{Jonathan Elmer}
\address{University of Aberdeen\\
King's College, Aberdeen\\
AB24 3UE}
\email{j.elmer@abdn.ac.uk}
\author{Martin Kohls}
\address{Technische Universit\"at M\"unchen \\
 Zentrum Mathematik-M11\\
Boltzmannstrasse 3\\
 85748 Garching, Germany}
\email{kohls@ma.tum.de}
\date{\today}
\begin{document}
\maketitle

\begin{abstract}
We fix a field $\kk$ of characteristic $p$.
For a finite group $G$ denote by $\delta(G)$ and $\sigma(G)$ respectively the minimal number $d$, such that
for any finite dimensional representation $V$ of $G$ over $\kk$ and any $v\in
V^{G}\setminus\{0\}$ or $v\in V\setminus\{0\}$ respectively, there exists a homogeneous
invariant  $f\in\kk[V]^{G}$ of positive degree at most $d$ such that $f(v)\ne
0$. Let $P$ be a Sylow-$p$-subgroup of
$G$ (which we take to be trivial if the group order is not divisble by $p$). We show that $\delta(G)=|P|$. If $N_{G}(P)/P$ is cyclic, we show
$\sigma(G)\ge|N_{G}(P)|$. If $G$ is $p$-nilpotent and $P$ is not normal in $G$, we
show $\sigma(G)\le \frac{|G|}{l}$, where $l$ is the smallest prime divisor of
$|G|$. These results extend known results in the non-modular case to the
modular case.
\end{abstract}

\section{Introduction}\label{SecIntro}

Let $G$ be a linear algebraic group over an algebraically closed field $\kk$,
$V$ a finite dimensional rational representation of $G$ (which we will call a $G$-module), and denote by $\kv$ the ring of polynomial
functions $V \rightarrow \kk$. The action of $G$ on $V$ induces an action of
$G$ on $\kv$ via $g\cdot f (v) := f(g^{-1}v)$ for $g\in G$,  $f\in\kv$ and
$v\in V$. The set of $G$-invariant polynomial functions under this action is
denoted by $\kvg$, and inherits a natural grading from $\kv$, since the given action is degree-preserving. We denote by
$\kv^{G}_{d}$ the set of polynomial invariants of degree $d$ and the zero-polynomial, and by
$\kv^{G}_{\leq d}$ the set of polynomial invariants of degree at most
$d$. For any subset $S$ of $\kv$, we define $S_{+}$ as those elements of $S$
with constant term zero.

A linear algebraic group $G$ is said to be \emph{reductive} if for every
$G$-module $V$ we have that, for all nonzero $v \in V^G$, there exists  $f \in \kk[V]_{+}^G$ such that $f(v) \neq 0$. It is said to be \emph{linearly reductive} if
for all nonzero  $v \in V^G$ there exists $f \in \kk[V]^G_1$ such that $f(v) \neq 0.$ Obviously linear reductivity implies reductivity. Denote by $\mathcal{N}_{G,V}$ the nullcone of $V$, that is
\[\mathcal{N}_{G,V} = \left\{v \in V| \quad f(v)=0  \quad \text{ for all } f
  \in \kk[V]_{+}^G \right\}.\] 
Note that the nullcone is the vanishing set of the ``Hilbert Ideal''  $I_{G,V}$ of
$\kv$, which is the ideal of $\kv$ generated by all homogeneous invariants of positive degree.
Then  $G$ is reductive if for any $G$-module $V$,  one has that $\mathcal{N}_{G,V} \cap V^G = \{0\}$.

The concept of reductivity is important in both invariant theory and the
theory of linear algebraic groups. One of the most celebrated results of 20th
century invariant theory is the theorem of Nagata \cite{NagataHilbert14} and
Popov \cite{Popov} which states that $\kk[X]^G$ is finitely generated for all
affine $G$-varieties $X$ if and only if $G$ is reductive.

The  first part of the present article is motivated by a simple, perhaps even facetious,
question: are there any ``quadratically reductive'' groups? The reader can
probably guess the definition, but we explain this in detail, while
introducing some useful terminology. Let $G$ be a linear algebraic group over
$\kk$ and $V$ a $G$-module. We shall say a subset $S \subseteq \kvg$ is a
$\delta$-\emph{set} if, for all $v \in V^G \setminus \mathcal{N}_{G,V}$, there
exists an $f\in S_{+}$ such that $f(v) \neq 0$. We shall call a subalgebra of $\kk[V]^G$ a $\delta$-\emph{subalgebra} if it is a $\delta$-set. The quantity $\delta(G,V)$ is then defined as
\[
\delta(G,V) = \min\{d\ge 0|\quad \kk[V]^G_{\leq d} \, \text{ is a
  $\delta$-set} \,  \}.
\]
We will justify below that $\delta(G,V)$ is always a finite number. Finally, we define $$\delta(G):= \sup\{\delta(G,V) |\quad V \,\text{ a
   $G$-module}\},$$ where we take the supremum of an unbounded set to be infinity.

Note that if $G$ is reductive over $\kk$, the definitions above simplify: $S
\subseteq \kk[V]^G$ is then a $\delta$-set if and only if for all nonzero $v \in
V^G$, there  is an $f \in S_{+}$ with $f(v) \neq 0$. Note further, that a reductive group $G$ is linearly reductive if and only if $\delta(G)=1$. A quadratically reductive group, then, ought be a reductive group $G$ for which $\delta(G) = 2$. There are plenty of examples. We show in Section \ref{SecDelta}:

\begin{Theorem}\label{thmdeltag}
Let $G$ be a finite group, $\kk$ an algebraically closed field of characteristic $p$, and $P$ a Sylow-$p$-subgroup of $G$. Then $\delta(G) = |P|$.
\end{Theorem}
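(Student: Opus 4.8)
The plan is to prove $\delta(G)\leq|P|$ and $\delta(G)\geq|P|$ separately; the first bound holds for every $G$-module, and the second is witnessed by the regular representation.

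For $\delta(G)\leq|P|$, the idea is to descend to the Sylow subgroup $P$ using the relative transfer $\operatorname{Tr}_P^G\colon \kk[V]^P\to\kk[V]^G$, $h\mapsto\sum_{gP\in G/P}g\cdot h$, which makes sense because $[G:P]$ is invertible in $\kk$. Fix a $G$-module $V$ and $v\in V^G\setminus\mathcal{N}_{G,V}$; in particular $v\neq 0$, so there is a linear form $x\in\kk[V]_1$ with $x(v)=1$. Since $P$ permutes the finite set $P\cdot x\subseteq\kk[V]_1$, the product $h:=\prod_{\ell\in P\cdot x}\ell$ is a homogeneous $P$-invariant whose degree, namely $[P:\Stab_P(x)]$, lies between $1$ and $|P|$; as $v$ is $P$-fixed we get $h(v)=\prod_\ell\ell(v)=x(v)^{\deg h}=1$. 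Then $\operatorname{Tr}_P^G(h)\in\kk[V]^G$ is homogeneous of the same positive degree $\leq|P|$, and since $v$ is $G$-fixed its value at $v$ equals $[G:P]\cdot h(v)=[G:P]\neq 0$ in $\kk$. Hence $\kk[V]^G_{\leq|P|}$ is a $\delta$-set, so $\delta(G,V)\leq|P|$ for all $V$, and therefore $\delta(G)\leq|P|$. (This also shows $\delta(G,V)$ is finite and that finite groups are reductive, so that $\mathcal{N}_{G,V}\cap V^G=\{0\}$.)

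For $\delta(G)\geq|P|$, I would take $V=\kk G$, the regular representation with basis $\{e_g:g\in G\}$ and $G$ acting by $g\cdot e_h=e_{gh}$, together with the fixed vector $v:=\sum_{g\in G}e_g$, which is nonzero and not in $\mathcal{N}_{G,V}$ (for instance $\prod_{g\in G}x_g$ is an invariant not vanishing at $v$). In the coordinate ring $\kk[V]=\kk[x_g:g\in G]$ the group $G$ permutes the variables, $g\cdot x_h=x_{gh}$, hence permutes the monomials, so every homogeneous invariant $f$ is a $\kk$-linear combination $f=\sum_O c_O\bigl(\sum_{m\in O}m\bigr)$ of orbit sums over monomial $G$-orbits $O$ of the given degree. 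Since $x_h(v)=1$ for every $h$, each monomial takes the value $1$ at $v$, so $\bigl(\sum_{m\in O}m\bigr)(v)=|O|\cdot 1_\kk$ and therefore $f(v)=\sum_O c_O|O|$. The key claim is that every monomial orbit $O$ of degree $d$ with $0<d<|P|$ satisfies $p\mid|O|$: otherwise $\Stab_G(m)$ contains a Sylow $p$-subgroup, and after replacing $m$ by a suitable $G$-translate --- which changes neither $O$ nor $d$ --- we may assume $P\subseteq\Stab_G(m)$; but the $P$-orbits on the variables are precisely the right cosets $Pg$, each of size $|P|$, so a $P$-invariant monomial has all exponents constant on these cosets and hence degree divisible by $|P|$, which is impossible when $0<d<|P|$. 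Consequently $f(v)=0$ for every homogeneous invariant of positive degree $<|P|$, so $\kk[\kk G]^G_{\leq|P|-1}$ is not a $\delta$-set and $\delta(G)\geq\delta(G,\kk G)\geq|P|$.

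Combining the two bounds yields $\delta(G)=|P|$ (and, in passing, $\delta(G,\kk G)=|P|$). I expect the lower bound to be the only real obstacle: one has to spot that the all-ones vector in the regular representation is extremal, and then isolate the clean counting argument, whose crux is the elementary observation that any $P$-invariant monomial in the regular representation of a $p$-group $P$ has degree a multiple of $|P|$. The upper bound is a routine application of the relative transfer once one decides to pass to the Sylow subgroup.
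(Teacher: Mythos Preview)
Your proof is correct. The lower bound via the regular representation is essentially the paper's argument: both observe that $\kk[V_{\reg}]^G$ is spanned by orbit sums of monomials, that each orbit sum evaluates at $v=\sum_g v_g$ to the orbit size, and that an orbit size coprime to $p$ forces the stabilizer to contain a Sylow $p$-subgroup, whence the monomial has degree at least $|P|$.

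For the upper bound you take a genuinely different route. The paper first proves $\delta(G)=\delta(G,V_{\reg})$ through a short chain of structural results (submodules do not increase $\delta$; direct sums take the maximum; every module embeds in a power of $V_{\reg}$), and then, working in $V_{\reg}$, exhibits the single orbit sum $o_G\bigl(\prod_{g\in P}x_g\bigr)$ of degree $|P|$ with value $[G:P]\neq 0$ at $v$. You instead treat an arbitrary module $V$ directly: pick a linear form nonzero at the given fixed vector, multiply over its $P$-orbit to obtain a $P$-invariant of degree dividing $|P|$, and push it up via the relative transfer $\operatorname{Tr}_P^G$, which scales the value at $v$ by the invertible factor $[G:P]$. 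Your argument is shorter and bypasses the reduction to $V_{\reg}$ altogether; the paper's detour, in exchange, isolates the reusable identity $\delta(G)=\delta(G,V_{\reg})$, whose analogue for $\sigma$ is exploited later.
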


It is well known that a finite group $G$ is linearly reductive over a field $\kk$ if and only if the order of $G$ is not divisible by the characteristic of $\kk$. The theorem above can be viewed as a generalisation of this result, where we take the Sylow-$p$-subgroup to be trivial in the non-modular case.

In addition to $\delta(G)$, we also study the closely related quantity
$\sigma(G)$. The definition is as follows.  We shall say a subset $S \subseteq
\kk[V]^G$ is a $\sigma$-\emph{set} if, for all $v \in V \setminus
\mathcal{N}_{G,V}$, there exists an $f\in S_{+}$  such that $f(v) \neq 0$. 

We shall call a subalgebra of $\kk[V]^G$ a $\sigma$-\emph{subalgebra} if it is a $\sigma$-set. The quantity $\sigma(G,V)$ is then defined as
\[
\sigma(G,V) = \min\{d\ge 0|\quad \kk[V]^G_{\leq d} \,\, \text{ is a $\sigma$-set} \,
\}.
\]
 It is clear that a generating set of  the Hilbert ideal $I_{G,V}$ which
 consists of invariants is a $\sigma$-set. Therefore, since $\kv$ is Noetherian, $\kk[V]^G$ always contains a finite $\sigma$-set and the number $\sigma(G,V)$ is finite.
 Finally, we define 
\[
\sigma(G):= \sup\{\sigma(G,V) |\quad  V \text{ a $G$
   -module}\},
\] 
which can be finite or infinite. It is immediately clear that $\delta(G,V) \leq
\sigma(G,V)$ for all $G$-modules $V$, and that $\delta(G) \leq
\sigma(G)$. It is also well known that $\sigma(G)\le |G|$, e.g. from Dade's
Algorithm \cite[Proposition 3.3.2]{DerksenKemper}.

Note that $\sigma(G,V)$ can be interpreted in a few different ways. For
instance, we see that $\sigma(G,V)$ is the minimal degree $d$ such that there
exists a finite set of invariants of degree at most $d$ whose common zero set
is  $\mathcal{N}_{G,V}$. If $G$ is reductive, then a graded subalgebra $S \subseteq \kk[V]^G$ is a
$\sigma$-subalgebra if and only if $\kk[V]^G$ is a finitely generated
$S$-module (see \cite[Lemma~2.4.5]{DerksenKemper}). So in the case of
reductive groups, $\sigma(G,V)$  is the minimal degree $d$ such that
there exists a set $T$ of homogeneous invariants of degree at most $d$ such
that $\kvg$ is a finitely generated $\kk[T]$-module.
Recall that for reductive groups, $\mathcal{N}_{G,V}$ consists of those $v \in V$ such that $0 \in \overline{G\cdot v}$, where the bar denotes closure in the Zariski topology (see e.g. \cite[Lemma~2.4.2]{DerksenKemper}). In particular when $G$ is finite we have that $\mathcal{N}_{G,V}$ = \{0\}, so $\sigma(G,V)$
may be defined as the minimal degree $d$ such that there exists a finite set
of invariants of degree at most $d$ whose common zero set is $\{0\}$. 

For linearly reductive groups in
characteristic $0$, the  $\sigma$-number plays an
important role in giving upper bounds for the classical Noether number
$\beta(G,V)=\beta(\kvg)$, which is defined as the minimum degree $d$ such
that $\kk[V]^G_{\leq d}$ generates $\kk[V]^G$ as an algebra. Again, the ``global''
value $\beta(G)$ is defined as the supremum of all $\beta(G,V)$. For example, Derksen \cite[Theorem 1.1]{DerksenPolynomial} gives the
upper bound 
\[
\beta(G,V)\le \max\left\{2\,\, ,\,\,\frac{3}{8}\cdot \dim (\kvg)\cdot \sigma(G,V)^{2}\right\}.
\]
Cziszter and Domokos  \cite{DomokosCziszter} study $\sigma(G)$ for finite
groups over fields of characteristic not dividing $|G|$. In particular, they show

\begin{Prop}[{Cziszter and Domokos \cite{DomokosCziszter}}]\label{propdcnonmodularsigma}
Let $G$ be a finite group, and let $\kk$ be an algebraically closed field of characteristic not dividing $|G|$. Then $\sigma(G)=|G|$ if and only
if $G$ is cyclic. More precisely, if $G$ is not cyclic, then $\sigma(G) \leq |G|/l$ where $l$ is the smallest prime dividing $|G|$.
\end{Prop}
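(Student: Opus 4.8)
The plan is to prove both directions. The short one follows from a lower-bound principle that I set up first; the substantial one, namely $\sigma(G)\le|G|/l$ when $G$ is not cyclic, is reduced by a few elementary steps to a single hard case, for which a transfer argument works but the degree bookkeeping is the real obstacle. Throughout put $d_V(v):=\min\{\deg f\mid f\in\kvgplus,\ f(v)\ne0\}$, so that $\sigma(G,V)=\max_{0\ne v\in V}d_V(v)$ (recall $\NGV=\{0\}$). The principle is: for $0\ne v\in V$ let $N=\Stab_G(\kk v)$ be the stabiliser of the line $\kk v$; then $N$ acts on $\kk v$ through a character $\theta\colon N\to\kk^{\times}$ with $\ker\theta=\Stab_G(v)$, and if $f\in\kvg$ is homogeneous with $f(v)\ne0$ then $f(v)=f(nv)=\theta(n)^{\deg f}f(v)$ for all $n\in N$, so $\theta(n)^{\deg f}=1$; hence $m_V(v):=|\theta(N)|=[N:\Stab_G(v)]$ divides $\deg f$, and in particular $d_V(v)\ge m_V(v)$. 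For the easy direction, let $G=\langle g\rangle$ with $n=|G|$ and $\chi\colon G\to\kk^{\times}$ a faithful character (available as $\kk$ is algebraically closed and $\chr\kk\nmid n$); taking $V=\kk_\chi$ and $v\ne0$ we have $N=G$ (scalars) and $\ker\chi=1$, so $d_V(v)\ge m_V(v)=n$, whence $\sigma(G)\ge|G|$, and with the known bound $\sigma(G)\le|G|$ we get $\sigma(G)=|G|$. For the converse: since $N/\Stab_G(v)\hookrightarrow\kk^{\times}$ is cyclic, $m_V(v)$ is always the order of a cyclic quotient of a subgroup of $G$, so if $G$ is not cyclic then $m_V(v)\le|G|/l$ for all $V,v$ (if $N\ne G$ then $[G:N]\ge l$ and so $|N|\le|G|/l$; if $N=G$ then $\Stab_G(v)\ne1$, as otherwise $G\cong\theta(G)$ would be cyclic, so again $m_V(v)=[G:\Stab_G(v)]\le|G|/l$). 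This already shows $\sigma(G)\ne|G|$ for non-cyclic $G$; but it controls only the \emph{lower} estimate $m_V(v)$ of $d_V(v)$, whereas the assertion $\sigma(G)\le|G|/l$ is an \emph{upper} bound on $d_V(v)$.

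So assume $G$ is not cyclic; the task is to show $d_V(v)\le|G|/l$ for every $V$ and every $0\ne v\in V$. Using $\chr\kk\nmid|G|$ I would first make three reductions. (1) The restriction map $\kvg\to\kk[W]^G$ is surjective for every subrepresentation $W\subseteq V$, so $d_V(v)\le d_W(v)$ for $v\in W$; hence one may assume $V=\kk Gv$, in particular $\dim V\le[G:\Stab_G(v)]$. (2) Writing $\pi_G=\frac1{|G|}\sum_{g\in G}g$ for the Reynolds operator on $V$, a generic linear form $\ell$ gives a linear invariant $\pi_G(\ell)$ with $\pi_G(\ell)(v)=\ell(\pi_G(v))$; so if $\pi_G(v)\ne0$ then $d_V(v)=1$, and one may assume $\pi_G(v)=0$, whence $v\notin V^G$ and $\Stab_G(v)\ne G$. (3) If $G$ acts on $V=\kk Gv$ with non-trivial kernel $K$, then $\kvg=\kk[V]^{G/K}$, so $d_V(v)\le\sigma(G/K)\le|G/K|=|G|/|K|\le|G|/l$ (using $|K|\ge l$), and we are done. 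Thus it remains to treat: $G$ non-cyclic, $V=\kk Gv$ faithful, $\pi_G(v)=0$.

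For this case I would transfer from a maximal subgroup $M<G$; then $[G:M]\ge l$, so $|M|\le|G|/l$, and since $\chr\kk\nmid[G:M]$ the relative transfer $\operatorname{Tr}^G_M\colon\kk[V]^M\to\kvg$, $h\mapsto\sum_{gM\in G/M}g\cdot h$, is degree-preserving and surjective onto $\kvg$. A short computation, using only $M$-invariance of $h$, gives $\operatorname{Tr}^G_M(h)(v)=\frac{|\Stab_G(v)|}{|M|}\sum_{w\in Gv}h(w)$. Hence it suffices to produce a positive-degree $M$-invariant $h$ with $\deg h\le|M|\le|G|/l$ and $\sum_{w\in Gv}h(w)\ne0$; such an $h$ exists in \emph{some} degree (otherwise surjectivity of the transfer would force $f(v)=0$ for all $f\in\kvg$, contradicting $v\notin\NGV=\{0\}$), so everything hinges on the degree. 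The hard part will be exactly this degree control. A priori the points of the orbit $Gv$ can lie in very special position: for the $3$-dimensional irreducible representation $W$ of the Heisenberg group of order $27$, the space $\kk[W]^{G}_{3}$ is $2$-dimensional, spanned in suitable coordinates by $x^3+y^3+z^3$ and $xyz$, there are no invariants in degrees $1,2,4,5$ (the centre acts on $W$ by a primitive cube root of unity $\omega$), and both degree-$3$ invariants vanish at $v=(1,-\omega,0)$, so $d_W(v)\ge6$ although $m_W(v)=1$. One therefore cannot simply take $h$ to be a product of linear forms killing the unwanted $M$-orbits, and controlling the degree is where the work of Cziszter and Domokos enters: one combines the transfer above with an analysis — via the (small) Davenport constants of the abelian sections of $M$ governing the $M$-orbit structure on $Gv$ — of how far the minimal degree can be forced up, the outcome being that although $d_V(v)$ may be much larger than $m_V(v)$, it never exceeds $|G|/l$. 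As a guide, and as the base of an induction on $|G|$, note that the abelian case is transparent: over $\kk$ every representation of an abelian $G$ is a sum of characters, every invariant monomial supported on a single coordinate has degree divisible by the order of that character, and one finds $\sigma(G)=\exp(G)$ — which equals $|G|$ exactly for cyclic $G$ and is at most $|G|/l$ otherwise.
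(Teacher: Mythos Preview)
The paper itself does not prove this proposition; its ``proof'' consists of two citations, to \cite[Theorem~7.1]{DomokosCziszter} for the bound $\sigma(G)\le|G|/l$ when $G$ is not cyclic, and to \cite[Corollary~5.3]{DomokosCziszter} for $\sigma(A)=\exp(A)$ when $A$ is abelian (hence $\sigma(G)=|G|$ for cyclic $G$). Your write-up is therefore more ambitious than the paper's, and the parts you actually carry out---the easy direction via a faithful character, the three reductions, the transfer identity $\operatorname{Tr}^G_M(h)(v)=\frac{|\Stab_G(v)|}{|M|}\sum_{w\in Gv}h(w)$, and the abelian base case $\sigma(G)=\exp(G)$---are all correct and well explained. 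But you do not complete the argument either: at the decisive step (bounding the degree of an $M$-invariant $h$ with $\sum_{w\in Gv}h(w)\ne0$ by $|M|$) you explicitly hand off to Cziszter and Domokos, exactly as the paper does. So as a self-contained proof your proposal has the same gap as the paper's, just located more precisely.

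There is also one genuine logical slip. After showing $m_V(v)\le|G|/l$ for all $V,v$ when $G$ is not cyclic, you assert ``This already shows $\sigma(G)\ne|G|$''. It does not: by your own set-up $m_V(v)$ is a \emph{lower} bound for $d_V(v)$ (in fact a divisor of it), so knowing that $m_V(v)$ is small says nothing whatsoever about whether $d_V(v)$---and hence $\sigma(G)$---can reach $|G|$. You acknowledge in the very next clause that $m_V(v)$ controls only the lower estimate, so the sentence is presumably a lapse rather than a confusion; but as written it is an unjustified claim and should be removed. The strict inequality $\sigma(G)<|G|$ for non-cyclic $G$ genuinely requires the upper-bound work you defer, not the divisor argument.
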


\begin{proof} \cite[Theorem~7.1]{DomokosCziszter} states that if $G$ is not cyclic, then $\sigma(G) \leq |G|/l$ where $l$ is the smallest prime dividing $|G|$. In particular $\sigma(G)<|G|$ when $G$ is not cyclic. Conversely, \cite[Corollary~5.3]{DomokosCziszter} states that if $G$ is abelian, $\sigma(G) = \exp(G)$. In particular, if $G$ is cyclic, $\sigma(G)=|G|.$
\end{proof}

In sections \ref{SecSigmaGeneral} and \ref{SecSigmaFinite} we generalise some
results of Cziszter and Domokos  to fields of
arbitrary characteristic. In particular, we prove the following version of the
above for the modular case (where $N_{G}(P)$ is the normalizer of the subgroup
$P$ in $G$):

\begin{Theorem}\label{thmsigmaleg} 
Suppose $G$ is a finite group, and that $\kk$ is an algebraically closed field of characteristic $p$, where $p$ divides $|G|$. Let $P$ be
  a Sylow-$p$-subgroup of $G$. Also let $l$ denote the smallest prime divisor
  of $|G|$. Then the following holds:
\begin{enumerate}
\item[(a)] If $\sigma(G)=|G|$, then $N_{G}(P)/P$ is a cyclic group. If
  additionally  $P$ is abelian and $G\ne P$, then  $N_{G}(P)/P$ is also non-trivial.
\item[(b)] If $N_G(P)/P$ is cyclic, then $\sigma(G) \geq |N_G(P)|$. In
  particular $\sigma(G)=|G|$ when $P$ is normal in $G$ and $G/P$ is cyclic.
\item[(c)] If $G$ is $p$-nilpotent and $P$ is not normal, then $\sigma(G)\le\frac{|G|}{l}$.
\end{enumerate}
\end{Theorem}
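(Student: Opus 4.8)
The plan is to reduce Theorem~\ref{thmsigmaleg} to two general monotonicity principles for $\sigma$ under a subgroup $H\le G$, used together with Theorem~\ref{thmdeltag}, with Proposition~\ref{propdcnonmodularsigma} applied to the $p'$-group $N_G(P)/P$, and with Burnside's normal $p$-complement theorem. The principles are: (i)~$\sigma(G)\ge\sigma(H)$ for every $H\le G$; and (ii)~$\sigma(G)\le[G:H]\cdot\sigma(H)$ for every $H\le G$. For~(i): given an $H$-module $W$, put $V=\Ind_H^GW$; the description $V=\bigoplus_{gH\in G/H}g\otimes W$ exhibits $W=1\otimes W$ as an $H$-direct summand of $\operatorname{Res}^G_HV$, so restriction of polynomial functions to the subspace $W\subseteq V$ is a degree-preserving map carrying $\kk[V]^G$ into $\kk[W]^H$; hence a vector $w\in W\setminus\{0\}$ witnessing that $\kk[W]^H_{\le d}$ is not a $\sigma$-set witnesses, viewed in $1\otimes W$, that $\kk[V]^G_{\le d}$ is not a $\sigma$-set. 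For~(ii): for a $G$-module $V$ and $h\in\kk[V]^H$ the polynomial $\prod_{gH\in G/H}(T-g\cdot h)$ is well defined and fixed by $G$, so its coefficients are invariants of degree $\le[G:H]\deg h$; thus every $h\in\kk[V]^H$ of degree $\le\sigma(H,V)$ is integral over the subalgebra generated by the invariants of degree $\le[G:H]\sigma(H,V)$, whence so is all of $\kk[V]$, and $\sigma(G,V)\le[G:H]\sigma(H,V)$.

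For part~(b), by~(i) it suffices to show $\sigma(N_G(P))\ge|N_G(P)|$; since $\sigma$ of a finite group never exceeds its order, it is enough to prove that $\sigma(G)=|G|$ whenever $P\trianglelefteq G$ and $G/P$ is cyclic, which also yields the ``in particular'' clause. Here $\gcd(|P|,[G:P])=1$, so by Schur--Zassenhaus $G=P\rtimes C$ with $C$ cyclic of order $m:=[G:P]$, coprime to $p$; let $i\colon G\to\ZZ/m\ZZ$ be the projection with kernel $P$. I would take $V=\kk[G]$, the regular representation with its permutation coordinates $x_g$, fix a primitive $m$-th root of unity $\zeta\in\kk$, and put $v_g=\zeta^{i(g)}$. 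Every homogeneous invariant is a linear combination of orbit sums $\Theta_\mu=\sum_{gS\in G/S}x^{g\mu}$ of monomials $\mu$ (with $S=\Stab(\mu)$), and one computes $\Theta_\mu(v)=\zeta^{\langle\mu\rangle}\sum_{gS\in G/S}\zeta^{i(g)\deg\mu}$, where $\langle\mu\rangle=\sum_g i(g)\mu(g)$. Since $\mu$ is constant on the right cosets of $S$, the order $|S|$---hence also the order $m'$ of $i(S)\le\ZZ/m\ZZ$---divides $\deg\mu$; this makes the summand well defined modulo $gS$, and the sum collapses to $\tfrac{|P|}{|S\cap P|}\cdot\sum_{j=0}^{m/m'-1}(\zeta^{\deg\mu})^j$. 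The geometric sum vanishes unless $m\mid\deg\mu$, and the factor $|P|/|S\cap P|$ (a power of $p$) vanishes in $\kk$ unless $P\le S$; if both hold then $|P|\cdot m=|G|$ divides $\deg\mu$. Hence $\Theta_\mu(v)=0$ for every $\mu$ with $0<\deg\mu<|G|$, and as $v\ne0$ this gives $\sigma(G,V)\ge|G|$.

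For part~(a), first statement: with $K=N_G(P)$ and $\bar K=K/P$, write $K=P\rtimes C$ with $C\cong\bar K$; applying~(ii) to $C\le K\le G$ gives $\sigma(G)\le[G:K]\sigma(K)\le[G:K][K:C]\sigma(C)=\tfrac{|G|}{|\bar K|}\,\sigma(\bar K)$. If $\sigma(G)=|G|$ then $\sigma(\bar K)\ge|\bar K|$, hence $\sigma(\bar K)=|\bar K|$, and since $\bar K$ is a $p'$-group Proposition~\ref{propdcnonmodularsigma} forces $\bar K=N_G(P)/P$ to be cyclic. For the second statement, suppose moreover $P$ is abelian, $G\ne P$, but $N_G(P)=P$; then $P\le Z(N_G(P))$, so Burnside's normal $p$-complement theorem gives $G=N\rtimes P$ with $N\trianglelefteq G$ of order prime to $p$ and $N\ne1$ (as $G\ne P$), and $P$ is not normal in $G$ (otherwise $G=N\times P$ and $N_G(P)=G\supsetneq P$); so $G$ is $p$-nilpotent with non-normal Sylow $p$-subgroup, and part~(c) gives $\sigma(G)\le|G|/l<|G|$, a contradiction.

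It remains to prove part~(c), which I expect to be the main obstacle. Write $G=N\rtimes P$ with $N$ the normal $p$-complement. If $N$ is not cyclic, then Proposition~\ref{propdcnonmodularsigma} gives $\sigma(N)\le|N|/l_N$ with $l_N$ the least prime dividing $|N|$, and~(ii) gives $\sigma(G)\le[G:N]\sigma(N)=|P|\,\sigma(N)\le|G|/l_N\le|G|/l$. The hard case is $N$ cyclic: then $P$ acts non-trivially on $N$ (this is exactly the failure of $P\trianglelefteq G$), and since $\Aut(N)$ is abelian $G$ has a normal subgroup $M\supseteq N$ of index $p$; if the Sylow $p$-subgroup of $M$ is still non-normal in $M$, induction on $|G|$ together with~(ii) finishes ($\sigma(G)\le p\,\sigma(M)\le p\,|M|/l=|G|/l$), and otherwise $M=N\times Q$ with $Q$ a $p$-group of index $p$ in $P$. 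In this last situation I would argue directly: for $V$ a $G$-module and $v\in V\setminus\{0\}$, decompose $V$ into its $N$-isotypic components (possible, $N$ being non-modular), which $P$ permutes and among which $V^N$ is $P$-stable; a vector $v\in V^N$ is separated in degree $|P|\le|G|/l$ by the norm over a transversal of $N$ in $G$ of a degree-one $N$-semi-invariant, as in the proof of Theorem~\ref{thmdeltag}, while a vector $v\notin V^N$ should be separated in degree $\le|N|\le|G|/l$ by a relative transfer $\operatorname{Tr}^G_N$ of a suitable power of a degree-one $N$-semi-invariant chosen at an $N$-character occurring in $v$. The delicate point---and where I expect the real work of part~(c) to lie---is guaranteeing that this last transfer is nonzero at $v$, i.e.\ controlling cancellation, which may force one to adapt the semi-invariant to the $P$-stabilizer $P_\chi$ of the chosen character $\chi$ or to transfer from the intermediate subgroup $N\rtimes P_\chi$ rather than from $N$ itself; the groups $\ZZ/q\rtimes\ZZ/p$ with $q\equiv1\pmod p$ are the smallest instances of this obstruction.
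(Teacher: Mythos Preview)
Your arguments for parts~(a) and~(b) are correct and essentially follow the paper. For~(b) there is a minor variation worth noting: the paper works with a $|P|$-dimensional module (basis $\{v_g:g\in P\}$, $P$ acting regularly and the cyclic complement $H=\langle t\rangle$ acting by $t^i\cdot v_g=\zeta^{-i}v_{t^igt^{-i}}$) and the test vector $\sum_{g\in P}v_g$, whereas you use the full regular representation $\kk[G]$ and the $G$-eigenvector with coordinates $\zeta^{i(g)}$. Both computations hinge on the same two obstructions---$|P|$ dividing the degree via the size of the $P$-orbit, and $m$ dividing the degree via a geometric sum---so the difference is cosmetic. Your collapsing formula is correct once one observes that the map $G/S\to G/PS$ has fibres of size $|P|/|P\cap S|$ and that $G/PS$ is naturally indexed by $\{0,\dots,m/m'-1\}$.

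Part~(c), however, is genuinely incomplete. Your reduction to cyclic $N$ and your index-$p$ normal subgroup $M$ are fine, and the inductive step~3a is valid. But in case~3b you land (after quotienting by the central $Q$) in the group $Z_h\rtimes Z_p$ with $Z_p$ acting faithfully, and you do not prove the needed bound there. Your proposed transfer argument---separating $v\notin V^N$ by $\operatorname{Tr}^G_N$ of a power of a degree-one $N$-semi-invariant---does not work as stated: the $P$-translates of a single character can conspire to cancel, and adapting to the $P$-stabiliser $P_\chi$ does not by itself control the sum. You correctly flag $Z_q\rtimes Z_p$ (with $q\equiv1\pmod p$) as the minimal obstruction, but this case already requires a new idea.

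The paper resolves~(c) by a different reduction: first quotient by $U=\ker(P\to\Aut(H))$, then pass to a cyclic subgroup $Z_p\le P$, and finally use an elementary number-theoretic lemma to locate a prime $q\mid h$ on whose $Z_q\le H$ the $Z_p$ still acts nontrivially. The crux is then a separate proposition showing $\sigma(Z_q\rtimes Z_p)=q$ in characteristic~$p$. That proposition is proved by decomposing the regular representation of $Z_q\rtimes Z_p$ into $q$ explicit $p$-dimensional summands $V_i$ and showing $\sigma(G,V_i)\le q$ for each; the latter uses a combinatorial fact (for any nonempty subset of $(\ZZ/q\ZZ)^\times$ there is a positive integer combination summing to $0$ with total weight $\le q$) to produce, for every possible support set of a nonzero vector, an $H$-invariant monomial of degree $\le q$ whose $D$-orbit sum does not vanish there. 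This is precisely the ``real work'' you anticipated, and it is not reachable by the transfer heuristic you sketched.
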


Another quantity associated with $\delta(G,V)$ and $\sigma(G,V)$, which has
attracted some attention in recent years, is $\beta_{\sep}(G,V)$.  It is
defined as follows: a subset $S \subseteq \kk[V]^G$ is called a
\emph{separating set} if, for any pair $v, w \in V$ such that there
exists $f \in \kk[V]^G$ with $f(v) \neq f(w)$, there exists $s \in S$ with
$s(v) \neq s(w)$. Then $\beta_{\sep}(G,V)$ is defined as 
\[
\beta_{\sep}(G,V) = \min\{d\ge 0|\quad \kk[V]^G_{\leq d} \, \text{ is a separating
  set} \, \},
\]
and once more, the ``global'' value $\beta_{\sep}(G)$ is defined to be the
supremum over all $\beta_{\sep}(G,V)$.

Our point of view is that $\delta$- and $\sigma$-sets are ``zero-separating''
sets. This leads to the following inequalities:

\begin{Prop}\label{DeltaLeSigmaLeBeta} 
Let $G$ be a linear algebraic group and $V$ a $G$-module. Then
\[\delta(G,V) \leq \sigma(G,V) \leq \beta_{\sep}(G,V) \leq \beta(G,V).\]
\end{Prop}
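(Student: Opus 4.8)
The plan is to prove the three inequalities separately. In every case both sides are by definition the least degree $d$ for which $\kk[V]^G_{\leq d}$ has a certain ``zero-separating'' or separating property, so it is enough to check, for a fixed bound $d$, that the property on the right implies the property on the left; minimality over $d$ then yields the inequality. The first inequality $\delta(G,V)\leq\sigma(G,V)$ is immediate: a $\sigma$-set must satisfy the separation condition for every $v\in V\setminus\NGV$, whereas a $\delta$-set need only satisfy it on the smaller set $V^G\setminus\NGV$; hence every $\sigma$-set is a $\delta$-set, and in particular whenever $\kk[V]^G_{\leq d}$ is a $\sigma$-set it is a $\delta$-set.

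For $\sigma(G,V)\leq\beta_{\sep}(G,V)$ I would show that a separating set is automatically a $\sigma$-set. Put $d=\beta_{\sep}(G,V)$ and take $v\in V\setminus\NGV$. By the definition of the nullcone there is $f\in\kk[V]^G_+$ with $f(v)\neq 0$; since $f$ has zero constant term, $f(0)=0\neq f(v)$, so the pair $v,0$ is separated by an invariant. As $\kk[V]^G_{\leq d}$ is a separating set, some $s\in\kk[V]^G_{\leq d}$ has $s(v)\neq s(0)$. Then $s-s(0)$ still lies in $\kk[V]^G_{\leq d}$, has zero constant term, hence belongs to $(\kk[V]^G_{\leq d})_+$, and does not vanish at $v$. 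Thus $\kk[V]^G_{\leq d}$ is a $\sigma$-set.

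For $\beta_{\sep}(G,V)\leq\beta(G,V)$ I would invoke the standard principle that a generating set of the invariant ring is a separating set. If $\beta(G,V)=\infty$ there is nothing to prove, so assume $d=\beta(G,V)<\infty$, i.e.\ $\kk[V]^G$ is generated as a $\kk$-algebra by $\kk[V]^G_{\leq d}$. Given $v,w\in V$ separated by some $f\in\kk[V]^G$, write $f$ as a polynomial in finitely many generators $g_1,\dots,g_m\in\kk[V]^G_{\leq d}$; if $g_i(v)=g_i(w)$ for all $i$ then $f(v)=f(w)$, a contradiction, so some $g_i$ separates $v$ and $w$. Hence $\kk[V]^G_{\leq d}$ is a separating set.

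I do not expect a serious obstacle here; the proposition is essentially a chain of formal implications. The one point that needs a little care is the bookkeeping with constant terms in the middle step, since $S_+$ is defined as the set of elements of $S$ with zero constant term rather than as the positive-degree components of elements of $S$. What makes the argument legitimate is simply that $\kk[V]^G_{\leq d}$ is a $\kk$-subspace containing the constants, so subtracting off $s(0)$ keeps us inside $\kk[V]^G_{\leq d}$ while killing the constant term; everything else is routine.
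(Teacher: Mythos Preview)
Your proof is correct and follows essentially the same route as the paper: the first and last inequalities are dismissed as trivial, and the middle inequality is obtained by showing that a separating set separates any $v\notin\NGV$ from $0$. Your handling of the constant term via $s-s(0)$ is slightly more explicit than the paper's version (which tacitly works with $S\subseteq\kk[V]^G_+$), but the substance is the same.
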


\begin{proof} 
The first and last inequalities are trivial. Assume $S\subseteq \kk[V]^{G}_{+}$ is a separating set. It is enough to show that $S$ cuts out the
Nullcone. Indeed, if $v\in V\setminus \NGV$, then there is an
$f\in\kk[V]^{G}_{+}$ such that $f(v)\ne 0 = f(0)$. Thus there is an $s\in S$
such that $s(v)\ne s(0)=0$. Consequently, if $S \subseteq \kk[V]_{+}^G$ is a separating set then it is a $\sigma$-set, and we get the second inequality. 
\end{proof}

The above implies that one has, for any linear algebraic group
$G$, $$
\delta(G) \leq \sigma(G) \leq \beta_{\sep}(G) \leq \beta(G).
$$  
For finite groups, Derksen and Kemper \cite[Theorem~3.9.13]{DerksenKemper} showed that $\bsep(G) \leq |G|$, independently of the characteristic of $\kk$. For this reason we obtain as a consequence of Theorem 1.1, for a finite group $G$ with Sylow-$p$-subgroup $P$,
\[
|P| = \delta(G) \leq \sigma(G) \leq \bsep(G) \leq |G|.
\]
Fleischmann \cite{FleischmannNoetherBound} and Fogarty \cite{FogartyNoetherBound} proved independently that if $p$ does not divide the order of $G$, then we have the stronger result that $\beta(G) \leq |G|$ (the result in characteristic zero is due to Emmy Noether, hence the name). In that case we obtain
\[
1=\delta(G) \leq \sigma(G) \leq \bsep(G) \leq \beta(G) \leq |G|.
\]
In this paper we focus mainly on the case where $G$ is a finite group. However, a
subsequent paper dealing with infinite algebraic groups is in preparation. As
for some of the results in the present paper the proofs for infinite groups
are not more difficult than those for finite groups, we will give the proofs for the most
general case.

\section{The $\delta$-number for finite groups}\label{SecDelta}

The goal of this section is to prove Theorem \ref{thmdeltag}, which we do in a
series of basic propositions. 

\begin{Prop}\label{propuleqv} 
Let $G$ be a reductive group and let $U$ be a $G$-submodule of $V$. Then $\delta(G,U) \leq \delta(G,V)$.
\end{Prop}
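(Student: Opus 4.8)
The plan is to transport a $\delta$-set for $V$ to one for $U$ by means of the restriction homomorphism. Since $U$ is a $G$-submodule, the restriction map $\kk[V] \to \kk[U]$, $f \mapsto f|_U$, is $G$-equivariant (because $U$ is $G$-stable) and therefore restricts to a homomorphism $\kk[V]^G \to \kk[U]^G$. This map does not increase degree (in linear coordinates adapted to $U \subseteq V$ it merely sets the complementary variables to zero), and the constant term of $f|_U$ equals $f(0)$; hence it sends $(\kk[V]^G_{\le d})_+$ into $(\kk[U]^G_{\le d})_+$ for every $d \ge 0$.

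The one genuine point is to compare the two nullcones: I claim $\mathcal{N}_{G,U} = U \cap \NGV$ (only the inclusion ``$\subseteq$'' is actually needed below). Because $U$ is Zariski-closed and $G$-stable in $V$, for $u \in U$ the orbit $G\cdot u$ and its closure lie in $U$, and the closure of $G\cdot u$ computed in $U$ agrees with the one computed in $V$. Since $G$ is reductive, the orbit-closure description of the nullcone recalled in Section \ref{SecIntro} gives $u \in \mathcal{N}_{G,U} \iff 0 \in \overline{G\cdot u} \iff u \in \NGV$, which is the claim.

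Granting this, the proof is immediate. Put $d := \delta(G,V)$, which is finite (if it were not, the assertion would be trivial). Let $v \in U^G \setminus \mathcal{N}_{G,U}$; if no such $v$ exists there is nothing to prove. Then $v \in U \cap V^G \subseteq V^G$, and by the nullcone comparison $v \notin \NGV$, so $v \in V^G \setminus \NGV$. By the definition of $\delta(G,V)$ there is $f \in (\kk[V]^G_{\le d})_+$ with $f(v) \ne 0$; its restriction $f|_U$ then lies in $(\kk[U]^G_{\le d})_+$ and satisfies $f|_U(v) = f(v) \ne 0$. Hence $\kk[U]^G_{\le d}$ is a $\delta$-set for $U$, and so $\delta(G,U) \le d = \delta(G,V)$. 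No serious obstacle arises here: all of the content sits in the identity $\mathcal{N}_{G,U} = U \cap \NGV$, and reductivity of $G$ is used only to establish it, via the orbit-closure characterization — without that characterization one would instead need surjectivity of $\kk[V]^G \to \kk[U]^G$, which can fail for groups that are merely geometrically reductive.
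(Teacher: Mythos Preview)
Your proof is correct and follows essentially the same route as the paper's: show that a point $u \in U^G \setminus \mathcal{N}_{G,U}$ lies in $V^G \setminus \mathcal{N}_{G,V}$, pick an invariant of degree at most $\delta(G,V)$ not vanishing there, and restrict it to $U$. The only difference is that the paper handles the key step more directly: since $u$ is a \emph{nonzero fixed point}, the definition of reductivity (equivalently $\mathcal{N}_{G,V}\cap V^G=\{0\}$) already yields $u\notin\mathcal{N}_{G,V}$, so your detour through the orbit-closure description of the nullcone and the identity $\mathcal{N}_{G,U}=U\cap\mathcal{N}_{G,V}$ is unnecessary here (though correct).
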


\begin{proof}  
Let $d = \delta(G,V)$ and take $u \in U^{G} \setminus \mathcal{N}_{G,U}
$.  Clearly $u \in V^{G}\setminus\{0\}$, and reductivity implies $u \not\in  \mathcal{N}_{G,V}$. It follows that there exists an
  $f \in \kk[V]_{+,\leq d}^G$  with $f(u) \neq 0$. Now set $g:=
  f|_U$. Then we have $g \in \kk[U]_{+,\leq d}^G$ and $g(u) \neq 0$. This shows that $\delta(G,U) \leq d$. 
\end{proof}

\begin{Prop}\label{propdctsum}
Let $G$ be a reductive group, $V_1, V_2$ be $G$ modules and $W=V_1 \oplus V_2$. Then $\delta(G,W) = \max\{\delta(G,V_1),\delta(G,V_2)\}$.
\end{Prop}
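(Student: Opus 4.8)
The plan is to prove the two inequalities separately. For $\delta(G,W) \geq \max\{\delta(G,V_1),\delta(G,V_2)\}$ I would simply invoke Proposition \ref{propuleqv}: both $V_1$ and $V_2$ are $G$-submodules of $W$, so $\delta(G,V_i) \leq \delta(G,W)$ for $i = 1,2$, and the maximum of the two is therefore at most $\delta(G,W)$.

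For the reverse inequality, set $d := \max\{\delta(G,V_1),\delta(G,V_2)\}$ and take an arbitrary $w \in W^G \setminus \mathcal{N}_{G,W}$. Writing $w = (v_1,v_2)$ with $v_i \in V_i$, I would first note that $W^G = V_1^G \oplus V_2^G$, so each $v_i$ lies in $V_i^G$. The key reduction is that $v_i \notin \mathcal{N}_{G,V_i}$ for at least one $i$: otherwise both $v_1$ and $v_2$ would be invariant vectors lying in the respective nullcones, so reductivity would force $v_1 = v_2 = 0$ and hence $w = 0 \in \mathcal{N}_{G,W}$, a contradiction. Without loss of generality I may then assume $v_1 \in V_1^G \setminus \mathcal{N}_{G,V_1}$.

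Next, by the definition of $\delta(G,V_1)$ there is a homogeneous invariant $f \in \kk[V_1]^G_{+}$ of degree at most $d$ with $f(v_1) \neq 0$. To transport this to $W$, I would use the $G$-equivariant projection $\pi_1 \colon W \to V_1$, which induces a degree-preserving $\kk$-algebra homomorphism $\pi_1^* \colon \kk[V_1] \to \kk[W]$ that carries $G$-invariants to $G$-invariants and maps $\kk[V_1]^G_+$ into $\kk[W]^G_+$. Then $g := f \circ \pi_1 \in \kk[W]^G_{+}$ has degree at most $d$ and $g(w) = f(\pi_1(w)) = f(v_1) \neq 0$. As $w$ was arbitrary, this shows $\kk[W]^G_{\leq d}$ is a $\delta$-set for $W$, that is $\delta(G,W) \leq d$; combined with the first inequality this gives the claimed equality.

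I do not expect a genuine obstacle here: the argument is a routine combination of restriction (for $\geq$) and pullback along the coordinate projections (for $\leq$). The only points that need attention are the identification $W^G = V_1^G \oplus V_2^G$ and the two uses of reductivity — namely that an invariant vector lying in the nullcone must be zero — which are exactly what licenses passing from a nonzero invariant $w \in W$ to a nonzero invariant in one of the summands that still lies outside the corresponding nullcone.
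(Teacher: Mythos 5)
Your proposal is correct and follows essentially the same route as the paper: the lower bound via Proposition \ref{propuleqv}, and the upper bound by using reductivity to find a summand component $v_1 \in V_1^G \setminus \mathcal{N}_{G,V_1}$ and then pulling back an invariant of degree at most $d$ along the projection $W \to V_1$ (the paper phrases this as the $G$-algebra inclusion $\kk[V_1] \subseteq \kk[W]$, which is the same map). The only cosmetic difference is that you argue by contradiction that some $v_i$ lies outside its nullcone, while the paper argues directly from $w \neq 0$; these are equivalent uses of reductivity.
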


\begin{proof} 
We have $d:=\max\{\delta(G,V_1),\delta(G,V_2)\} \leq \delta(G,W)$ by the previous
proposition.  Take $w=v_1+v_2 \in W^G\setminus\mathcal{N}_{G,W}$ with
$v_i \in V_i$ for $i=1,2$. Then $v_i \in V_i^G$ for $i=1,2$. Clearly $w\ne 0$, hence $v_{1}\ne 0$ or $v_{2}\ne 0$.
Without loss of generality assume $v_1 \neq 0$. Reductivity implies $v_{1}\in
V_{1}^{G}\setminus\mathcal{N}_{G,V_{1}}$. Hence there exists an $f \in
\kk[V_1]^G_{+,\leq \delta(G,V_{1})}$ with $f(v_1) \neq 0$. As we have the
$G$-algebra 
inclusion $\kk[V_{1}]\subseteq \kk[V_{1}\oplus V_{2}]$, $f$
can be viewed as an element of $\kk[W]^{G}_{+,\le d}$  satisfying $0\ne f(v_{1})=f(v_{1}+v_{2})=f(w)$.  This shows that $\delta(G,W) \leq d$ as required.
\end{proof}

\begin{rem}\label{remdctsum} 
Using the above and induction, it follows that $$\delta(G,W) =
\max\{\delta(G,V_i)|\quad  i = 1, \ldots , n\}$$ whenever $W = \bigoplus_{i=1}^n V_i$ is a finite direct sum of $G$-modules.
\end{rem}

\begin{Prop}\label{propvreg}
Let $G$ be a finite group. Then $\delta(G) = \delta(G,V_{\reg})$ where
$V_{\reg}:=\kk G$ denotes the regular representation of $G$ over $\kk$.
\end{Prop}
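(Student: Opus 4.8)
The plan is to show the two inequalities $\delta(G)\ge\delta(G,V_{\reg})$ and $\delta(G)\le\delta(G,V_{\reg})$ separately. The first is immediate from the definition of $\delta(G)$ as a supremum over all $G$-modules, since $V_{\reg}=\kk G$ is itself a $G$-module. So the content is the reverse inequality: every $G$-module $V$ satisfies $\delta(G,V)\le\delta(G,V_{\reg})$.

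The key observation is that, for a finite group $G$, every $G$-module $V$ embeds as a $G$-submodule of a finite direct sum of copies of the regular representation. Concretely, one can argue as follows. Given $V$, choose a basis and the dual basis of $V^*$; each coordinate function $x_i\in V^*$ generates, under the $G$-action, a submodule of $\kk[V]$ of degree-$1$ elements, and the map $v\mapsto (g\cdot x_i(v))_{g\in G}$ realises a $G$-equivariant map into $\kk G$ (with $\kk G$ carrying the appropriate action); taking all $i$ together gives a $G$-equivariant injection $V\hookrightarrow (V_{\reg})^{\oplus \dim V}$ (one should pick the convention on $V_{\reg}$ — left regular representation — so that this is a morphism of $G$-modules, and check injectivity, which holds because the coordinate functions separate points of $V$). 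Alternatively and more cleanly: the left regular representation $\kk G$ contains every irreducible (in the non-modular case) and more relevantly, for \emph{any} finite $G$ and any $G$-module $V$, there is a surjection $(\kk G)^{\oplus n}\twoheadrightarrow V$ of $G$-modules (since $\kk G$ is a free module over itself, hence a projective generator of the category of $\kk G$-modules), and dualising — together with the fact that $\kk G\cong (\kk G)^*$ as $G$-modules — gives the desired embedding $V\hookrightarrow (\kk G)^{\oplus n}=(V_{\reg})^{\oplus n}$.

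Once this embedding is in hand, the proof concludes by combining the two propositions already proved: by Proposition~\ref{propuleqv} (with the ambient module $(V_{\reg})^{\oplus n}$), $\delta(G,V)\le\delta\bigl(G,(V_{\reg})^{\oplus n}\bigr)$, and by Remark~\ref{remdctsum} the right-hand side equals $\delta(G,V_{\reg})$. Note that finite groups are reductive (indeed, for finite $G$ the Reynolds-type averaging is replaced by the fact that $\NGV\cap V^G=\{0\}$, which follows since the common zero set of the invariants is $\{0\}$ when $G$ is finite, as noted in the introduction), so the hypotheses of Propositions~\ref{propuleqv} and~\ref{propdctsum} are satisfied. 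Chaining the inequalities gives $\delta(G,V)\le\delta(G,V_{\reg})$ for every $V$, hence $\delta(G)\le\delta(G,V_{\reg})$, and we are done.

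The main obstacle I anticipate is purely bookkeeping: getting the conventions on the regular representation right so that the embedding $V\hookrightarrow(V_{\reg})^{\oplus n}$ is genuinely $G$-equivariant, and confirming that $\kk G$ is self-dual as a $G$-module (which is standard: the pairing $\langle \sum a_g g, \sum b_g g\rangle=\sum a_g b_g$ is not $G$-invariant, but $\langle \sum a_g g, \sum b_g g\rangle=\sum a_g b_{g^{-1}}$ is, giving the isomorphism). There is no deep difficulty here — the real work was done in the preceding propositions — but some care is needed because the paper works over a not-necessarily-algebraically-closed field of characteristic $p$ possibly dividing $|G|$, so one cannot invoke semisimplicity; the projectivity/freeness of $\kk G$ over itself is what makes the argument go through in the modular case.
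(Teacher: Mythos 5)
Your proposal is correct and is essentially the paper's own argument: embed $V$ into $(V_{\reg})^{\oplus n}$ using that $\kk G$ is free over itself and self-dual, then apply Proposition~\ref{propuleqv} and Remark~\ref{remdctsum}. One tiny slip in your second (``cleaner'') construction: dualising a surjection $(\kk G)^{\oplus n}\twoheadrightarrow V$ embeds $V^{*}$, not $V$, so you should instead surject onto $V^{*}$ and dualise (as the paper does); your first, explicit coordinate-map construction is fine as stated.
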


\begin{proof} 
It is well-known that, given any  $G$-module $V$, we have an
embedding $V \hookrightarrow V^{n}_{\reg}$ for $n=\dim_{\kk}(V)$ (choosing an arbitrary basis of $V^{*}$ yields an
epimorphism $(\kk G)^{n}\twoheadrightarrow V^{*}$, and dualizing yields
the desired embedding as $V_{\reg}=\kk G$ is self dual - see also \cite[proof of
Corollary 3.11]{DraismaSeparating}).
Now by Proposition \ref{propuleqv} and Remark \ref{remdctsum} we obtain
$$\delta(G,V) \leq \delta(G,V^{n}_{\reg}) = \delta(G,V_{\reg}).$$
The result now follows from the definition of $\delta(G)$. 
\end{proof}

The proof of the following Proposition, which is key to proving Theorem \ref{thmdeltag}, is similar to \cite[Proposition
8]{KohlsKraft}, but our point of view is different and we get a new
result. Also note that if $G$ is a $p$-group, Theorem \ref{thmdeltag} and
Propositon \ref{DeltaLeSigmaLeBeta} imply $|G| = \delta(G) = \sigma(G) =
  \bsep(G)$, strengthening the result in \cite[Proposition~8]{KohlsKraft}.

\begin{Prop}\label{propdeltavreg} 
Let $G$ be a finite group, $\kk$ a field of characteristic $p$, and let $P$ be a Sylow-$p$-subgroup of $G$ (if $p=0$ or does not divide the order of $G$, take $P$ to be the trivial group). Then $\delta(G,V_{\reg}) = |P|$.
\end{Prop}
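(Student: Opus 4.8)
The plan is to establish the two inequalities $\delta(G,V_{\reg}) \le |P|$ and $\delta(G,V_{\reg}) \ge |P|$ separately, with the second being the substantive part. For the upper bound, I would first reduce from $G$ to $P$: since $V_{\reg} = \kk G$ decomposes as a $P$-module into $[G:P]$ copies of the regular representation $\kk P$ of $P$ (by choosing coset representatives), and since restriction of invariants $\kk[V_{\reg}]^G \hookrightarrow \kk[V_{\reg}]^P$ sends a separating-type invariant to one of the same degree, any fixed vector that is nonzero is detected by a $P$-invariant; but I need a $G$-invariant. The cleaner route is to use a transfer/relative-norm argument or the standard fact that $\kk[V]^G$ is detected in bounded degree once $\kk[V]^P$ is, exploiting that $[G:P]$ is invertible in $\kk$: if $f \in \kk[V_{\reg}]^P_{+,\le d}$ with $f(v)\neq 0$ for $v \in V_{\reg}^G$, one averages $f$ over $G/P$ (a well-defined operation since $[G:P]$ is a unit) to get a $G$-invariant of the same degree which, because $v$ is $G$-fixed, still takes the value $[G:P]f(v) \neq 0$ at $v$. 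Hence $\delta(G,V_{\reg}) \le \delta(P, \kk P)$, and so it suffices to handle the case $G = P$ a $p$-group, where I must show every nonzero fixed vector is separated by an invariant of degree $\le |P|$.

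For the case $G = P$ a $p$-group acting on $V_{\reg} = \kk P$: here $V_{\reg}^P = \kk\cdot e$ is one-dimensional, spanned by $e = \sum_{g\in P} g$, so I only need a single invariant of degree $\le |P|$ not vanishing at $e$. The natural candidate is the product $f = \prod_{g \in P} x_g$ of the coordinate functions dual to the group-element basis (or, more invariantly, the top orbit-product), which has degree exactly $|P|$, is clearly $P$-invariant since $P$ permutes the $x_g$ regularly, and satisfies $f(e) = 1 \neq 0$ because all coordinates of $e$ equal $1$. This gives $\delta(P, \kk P) \le |P|$, hence $\delta(G, V_{\reg}) \le |P|$ in general.

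For the lower bound $\delta(G, V_{\reg}) \ge |P|$, again reduce to $P$: by Proposition \ref{propuleqv} applied to the $G$-submodule generated by one copy of $\kk P$ inside $\kk G\vert_P$ — or rather, since we want a lower bound, it suffices to exhibit a $G$-module where fixed-point separation genuinely requires degree $|P|$; one shows $\delta(G,V_{\reg}) \ge \delta(P, W)$ for a suitable $P$-module $W$ by an induced-module / Frobenius reciprocity argument, reducing to proving $\delta(P, W) \ge |P|$ for $P$ a $p$-group. The heart of the matter, and the step I expect to be the main obstacle, is a nonvanishing-of-low-degree-invariants argument in the $p$-group case: one must show that for some $p$-group module (the regular representation should work), there is a nonzero fixed vector $v$ on which every homogeneous invariant of degree $< |P|$ vanishes. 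I would argue by looking at the subspace $U = \kk e$ and showing $e \in \NGV$ "up to degree $|P|-1$" — concretely, that the image of $\kk[V_{\reg}]^P_{+,\le |P|-1}$ in the local ring at $e$ lies in the maximal ideal, equivalently that the derivative/leading term of every such invariant vanishes appropriately. The cleanest approach is via the theory of the Hilbert ideal: one invokes that for a $p$-group, invariants of low degree cannot cut out the fixed subspace because the quotient $\kk[V]/(\text{Hilbert ideal})$ has a known lower bound on its top degree, or alternatively a direct computation showing that a $P$-invariant of degree $d$ evaluated along the line $\kk e$ is a polynomial divisible by a high power of the variable precisely when $d$ is large. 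This last estimate — that you genuinely cannot separate $e$ from $0$ below degree $|P|$ — is where the real work lies, and is presumably where the similarity to \cite[Proposition 8]{KohlsKraft} is exploited, likely via a Galois-theoretic or Artin–Schreier-type argument on the structure of $\kk[\kk P]^P$ near the fixed line.
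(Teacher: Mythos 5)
Your upper bound is essentially correct, though it takes a small detour compared with the paper: instead of restricting to $P$, using $\kk G|_P \cong (\kk P)^{[G:P]}$ together with Propositions \ref{propuleqv} and \ref{propdctsum}, and then transferring a $P$-invariant back up via $\sum_{gP\in G/P}g\cdot f$ (which at a $G$-fixed vector evaluates to $[G:P]f(v)\ne 0$), the paper simply takes the orbit sum $o_G\bigl(\prod_{g\in P}x_g\bigr)$, a $G$-invariant of degree $|P|$, and evaluates it at $v=\sum_{g\in G}v_g$ to get $[G:P]\ne 0$ in $\kk$. Either way $\delta(G,V_{\reg})\le |P|$ follows.

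The lower bound, however, has a genuine gap, in two places. First, the proposed reduction ``$\delta(G,V_{\reg})\ge\delta(P,W)$ by an induced-module/Frobenius reciprocity argument'' is only asserted, and it is not routine: the restriction-surjectivity argument that proves the analogous monotonicity for $\sigma$ (Proposition \ref{BoundForSubgroups}) does not carry over to $\delta$, because a $P$-fixed vector of $W\subseteq\Ind_P^G(W)$ is in general not $G$-fixed, and $\delta$ only tests $G$-fixed vectors; so a $P$-fixed vector that is hard to separate says nothing about $\delta(G,\cdot)$ without additional work. Second, and more seriously, even in the case $G=P$ you explicitly leave the essential step --- that every homogeneous invariant of degree $<|P|$ vanishes at $e=\sum_{g}v_g$ --- unproved, offering only candidate strategies (top degree of the Hilbert ideal, a Galois/Artin--Schreier argument) rather than an argument. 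The actual proof is elementary and needs no reduction to $P$ at all: since $V_{\reg}$ is a permutation module, $\kk[V_{\reg}]^G$ is spanned as a vector space by orbit sums $o_G(m)$ of monomials, and $o_G(m)(v)=|G\cdot m|\cdot 1_{\kk}$ because every monomial takes the value $1$ at $v$. Hence if a homogeneous $f$ satisfies $f(v)\ne 0$, some monomial $m$ of the same degree has $|G\cdot m|=[G:\Stab_G(m)]$ prime to $p$, so $\Stab_G(m)$ contains a Sylow-$p$-subgroup $Q$; then whenever $x_g^k$ divides $m$ so does $x_{qg}^k$ for all $q\in Q$, forcing $\deg(m)\ge|Q|=|P|$. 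Without this (or an equivalent) counting argument, your proof of $\delta(G,V_{\reg})\ge|P|$ is incomplete.
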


\begin{proof} Let $\{v_g|\,\, g \in G\}$ be a $\kk$-basis for
  $V:=V_{\reg}$. The fixed point space $V^G$ of $V$ is 1-dimensional and spanned
  by $v:=\sum_{g \in G}v_g$. Write $\kk[V] = \kk[x_g:\,\, g \in G]$ where
  $\{x_g|\,\, g \in G\}$ is the basis of $V^*$ dual to $\{v_g|\,\, g \in
  G\}$. Since $V$ is a permutation representation, the ring of invariants
  $\kk[V]^G$ is generated as a vector space by \emph{orbit sums} of monomials, that is, by invariants of the form
$$o_G(m):=\sum_{m' \in G\cdot m} m'$$ where $m := \prod_{g \in G}x_g^{n_g}$ is
a monomial in $\kk[x_g: g \in G]$ and $G \cdot m$ denotes the orbit of
$m$. Clearly then for any $g \in G$ we have $x_g(v) = 1$, and therefore for any monomial
$m\in\kv$ we have $m(v)=1$.
It follows that for any monomial $m$, we have
$$o_G(m)(v) = \sum_{m' \in G\cdot m}m'(v) = \sum_{m' \in G\cdot m} 1 =  |G\cdot m|.$$

Now let $0 \neq u \in V^G$. Then $u = \lambda v$ for some nonzero $\lambda \in \kk$. Set $m:=\prod_{g \in P}x_g$ and $f:= o_G(m)$. Note that $f$ is an invariant of degree $|P|$, and that
\[f(u) = \lambda^{|P|} |G \cdot m| =\lambda^{|P|} [G:\Stab_{G}(m)]=\lambda^{|P|}\frac{|G|}{|P|} \neq 0 \in \kk.\]
This shows that $\delta(G,V) \leq |P|$. 

Conversely, any $f \in \kk[V]^G$  can be written as a $\kk$-sum of orbit
sums of monomials. Therefore, if $f(v) \neq 0$ for some homogeneous invariant $f$, for some monomial $m$ of the same degree as
$f$ we must have $o_G(m)(v) \neq 0$. This means that $|G \cdot
m|=[G:\Stab_{G}(m)]$ is not divisible by $p$. Hence, $|P|$ divides
$|\Stab_G(m)|$. Therefore $\Stab_G(m)$ contains a Sylow-$p$-subgroup $Q$ of
$G$. Consequently, if $m$ is divisible by some $x^k_g$, $m$ must also be divisible by $x^k_{qg}$ for all $q \in Q$. In particular, $\deg(m) \geq |Q| = |P|$. This shows that $\deg(f) \geq |P|$, and hence $\delta(G,V) \geq |P|$.
\end{proof}

\begin{proof}[Proof of Theorem \ref{thmdeltag}]
Combine Propositions \ref{propvreg} and \ref{propdeltavreg}.
\end{proof}

\section{Relative results for the $\sigma$-number}\label{SecSigmaGeneral}

In this section we prove mainly relative results about $\sigma(G)$ for both finite and
infinite groups $G$. Many of these are extensions of results in
\cite{DomokosCziszter} to fields of arbitrary characteristic and to infinite groups. 

\begin{Prop}\label{propsigmauleqv} Let $G$ be a reductive group and let $U$ be
  a $G$-submodule of $V$. Then $\sigma(G,U) \leq \sigma(G,V)$. 
\end{Prop}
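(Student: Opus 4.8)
The plan is to mimic closely the proof of Proposition \ref{propuleqv} for the $\delta$-number, substituting the nullcone condition appropriately. Concretely, set $d = \sigma(G,V)$ and take an arbitrary $u \in U \setminus \mathcal{N}_{G,U}$. The key first step is to verify that $u$, viewed as an element of the ambient module $V$, lies in $V \setminus \mathcal{N}_{G,V}$; equivalently, that $\mathcal{N}_{G,V} \cap U \subseteq \mathcal{N}_{G,U}$. This containment should follow from the restriction map $\kk[V]^G \to \kk[U]^G$: if $u \in \mathcal{N}_{G,V}$, then $f(u) = 0$ for every $f \in \kk[V]^G_+$, and since every element of $\kk[U]^G_+$ extends (restricts from) an element of $\kk[V]^G_+$ — using that $G$ is reductive, so $\kk[V]^G \twoheadrightarrow \kk[U]^G$ — we get $g(u) = 0$ for all $g \in \kk[U]^G_+$, i.e. $u \in \mathcal{N}_{G,U}$. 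I would state this restriction surjectivity explicitly, citing reductivity (the Reynolds operator / the splitting $\kk[V] = \kk[U] \oplus \ker$ as $G$-modules when $U$ is a direct summand, or more carefully the general reductive argument that $\kk[V]^G \to \kk[U]^G$ is onto).

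Granting that $u \notin \mathcal{N}_{G,V}$, the definition of $\sigma(G,V) = d$ yields a homogeneous invariant $f \in \kk[V]^G_{+,\leq d}$ with $f(u) \neq 0$. Then I set $g := f|_U$, the restriction of $f$ to $U$. Since the restriction map is a degree-preserving $\kk$-algebra homomorphism sending $G$-invariants to $G$-invariants, we have $g \in \kk[U]^G_{+,\leq d}$, and evidently $g(u) = f(u) \neq 0$. This shows $\kk[U]^G_{\leq d}$ is a $\sigma$-set for $U$, hence $\sigma(G,U) \leq d = \sigma(G,V)$, as desired.

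The only genuine subtlety — and thus the step I would be most careful about — is the claim that $u \notin \mathcal{N}_{G,U}$ implies $u \notin \mathcal{N}_{G,V}$, which rests on surjectivity of restriction $\kk[V]^G \to \kk[U]^G$ and hence on reductivity of $G$. (Note the direction: it is the \emph{nonzero} direction of the nullcone that needs the work; the trivial inclusion $\mathcal{N}_{G,U} \subseteq \mathcal{N}_{G,V}\cap U$ is not what we want.) In fact this is the exact analogue of the step ``reductivity implies $u \notin \mathcal{N}_{G,V}$'' in the proof of Proposition \ref{propuleqv}, where for the $\delta$-number one could get away with $u \in V^G \setminus \{0\}$ directly; here for the $\sigma$-number the element $u$ need not be $G$-fixed, so the cleanest justification is via the restriction surjectivity. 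Everything else is routine bookkeeping that parallels the $\delta$-case verbatim.
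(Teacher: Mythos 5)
There is a genuine gap, and it sits exactly at the step you flagged as the subtle one. Your justification of ``$u \notin \mathcal{N}_{G,U}$ implies $u \notin \mathcal{N}_{G,V}$'' rests on the claim that the restriction map $\kk[V]^G \to \kk[U]^G$ is surjective because $G$ is reductive, ``via the Reynolds operator''. This is false in the setting of this paper: the Reynolds operator exists only for \emph{linearly} reductive groups, and for merely reductive (i.e.\ geometrically reductive) groups in characteristic $p$ -- which includes every finite group with $p \mid |G|$, the main case of interest here -- the restriction of invariants to a submodule need not be onto. A concrete counterexample: let $G = \Z/2\Z$ act on $V = \kk^2$ in characteristic $2$ by swapping coordinates, and let $U$ be the fixed line spanned by $(1,1)$. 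Then $\kk[V]^G = \kk[x+y,\,xy]$, and restricting to $U$ (with coordinate $t$) sends $x+y \mapsto 2t = 0$ and $xy \mapsto t^2$, so the image is $\kk[t^2] \subsetneq \kk[t] = \kk[U]^G$. Your parenthetical fallback (``the splitting $\kk[V] = \kk[U] \oplus \ker$ when $U$ is a direct summand'') is also unavailable, since $U$ is only assumed to be a submodule, and in the modular case submodules need not be direct summands.

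The conclusion you need, namely $\mathcal{N}_{G,V} \cap U \subseteq \mathcal{N}_{G,U}$, is nevertheless true, but it requires a different justification. One repair is to invoke \emph{power}-surjectivity: by Nagata's lemma on geometrically reductive groups, for any $g \in \kk[U]^G_+$ some power $g^n$ lifts to an element of $\kk[V]^G_+$, and since $g^n(u)=0$ forces $g(u)=0$, your argument then goes through -- but this is a nontrivial theorem you would need to cite, not a formal consequence of the definition of reductivity used here. The paper takes a cleaner route: since $G$ is reductive, $\mathcal{N}_{G,U}$ consists precisely of the points of $U$ whose $G$-orbit closure contains $0$ (this is the characterization quoted in the introduction from Derksen--Kemper), so $u \notin \mathcal{N}_{G,U}$ gives $0 \notin \overline{G\cdot u}$; because $U$ is a closed subset of $V$, the closure is the same whether taken in $U$ or in $V$, and applying the same characterization in $V$ yields $u \notin \mathcal{N}_{G,V}$. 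From that point on, your restriction argument and degree bookkeeping are correct and coincide with the paper's.
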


\begin{proof} 
Let $d:=\sigma(G,V)$ and take $u\in U\setminus \mathcal{N}_{G,U}$. This
implies $0\not\in\overline{G\cdot u}$. As $U$ is a
closed subset of $V$, it does not matter if the closure of $G\cdot u$ is taken
in $U$ or in $V$. Now the reductivity of $G$ implies $u\not\in \NGV$, and
therefore there exists an $f\in\kk[V]^{G}_{+,\le d}$ with $f(u)\ne 0$. Then
$f|_{U}\in \kk[U]^{G}_{+,\le d}$ also separates $u$ from $0$, hence
$\sigma(G,U)\le d$.
\end{proof}

Note that for non-reductive groups, it is not always the case that $U
\subseteq V$ implies $U \setminus \mathcal{N}_{G,U} \subseteq V \setminus \mathcal{N}_{G,V}$.
For example take the action of the additive group $\Ga=(\kk,+)$ on $V=\kk^{2}$ via $t*(a,b):=(a+tb,b)$ for
$t\in\Ga$ and $(a,b)\in V$. We write $\kk[V]=\kk[x,y]$.  Take the point $u=(1,0)$ in the submodule
$U:=\kk\cdot(1,0)$. As the action of $\Ga$ on $U$ is trivial,
$\kk[U]^{\Ga}=\kk[x|_{U}]$, so we have $u\in U\setminus
\mathcal{N}_{\Ga,U}$. But $\kk[V]^{\Ga}=\kk[y]$, so $u\in\mathcal{N}_{\Ga,V}$.

For arbitrary (even non-reductive) algebraic groups, we have the following
result:

\begin{Lemma}\label{sigmDirectSummand}
Let $G$ be an arbitrary group and let $U$ and $V$ be $G$-modules such that $U$
is a direct summand of $V$. Then $\sigma(G,U) \leq \sigma(G,V)$. 
\end{Lemma}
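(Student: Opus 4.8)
The plan is to exploit the two $G$-equivariant linear maps that come with a direct sum decomposition. Write $V = U \oplus W$ with $W$ a $G$-submodule, let $\iota \colon U \hookrightarrow V$ be the inclusion and $\pi \colon V \twoheadrightarrow U$ the projection along $W$; both are $G$-equivariant and satisfy $\pi \circ \iota = \id_U$. Dualizing yields graded $G$-algebra homomorphisms $\iota^* \colon \kk[V] \to \kk[U]$ (restriction of functions to $U$) and $\pi^* \colon \kk[U] \to \kk[V]$ (pullback along $\pi$), with $\iota^* \circ \pi^* = \id_{\kk[U]}$. Each restricts to the invariant rings, each preserves the grading, and each sends elements with zero constant term to elements with zero constant term; in particular $\pi^*$ maps $\kk[U]^G_+$ into $\kk[V]^G_+$ and $\iota^*$ maps $\kk[V]^G_{+,\le d}$ into $\kk[U]^G_{+,\le d}$ for every $d$.

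Set $d := \sigma(G,V)$ and take an arbitrary $u \in U \setminus \mathcal{N}_{G,U}$; the goal is to produce $f \in \kk[U]^G_{+,\le d}$ with $f(u) \ne 0$. First I would check that $\iota(u) \notin \NGV$: by assumption there is some $g \in \kk[U]^G_+$ with $g(u) \ne 0$, and then $\pi^*(g) \in \kk[V]^G_+$ satisfies $\pi^*(g)(\iota(u)) = g(\pi(\iota(u))) = g(u) \ne 0$, so indeed $\iota(u) \in V \setminus \NGV$. Now the definition of $\sigma(G,V) = d$ provides an invariant $h \in \kk[V]^G_{+,\le d}$ with $h(\iota(u)) \ne 0$. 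Finally put $f := \iota^*(h) = h|_U \in \kk[U]^G_{+,\le d}$; then $f(u) = h(\iota(u)) \ne 0$. Hence $\kk[U]^G_{\le d}$ is a $\sigma$-set for $U$, that is, $\sigma(G,U) \le d = \sigma(G,V)$.

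The only place where the hypothesis that $U$ is a direct \emph{summand} (rather than merely a submodule) enters is the step $\iota(u) \notin \NGV$, which is precisely the point that fails for the $\Ga$-example immediately preceding the statement; once $\pi^*$ is available this step is immediate, so I do not expect a genuine obstacle here. The remaining verifications — that $\iota^*$ and $\pi^*$ are grading-preserving $G$-algebra maps carrying $(\,\cdot\,)_+$ into $(\,\cdot\,)_+$, and that $\iota^* \circ \pi^* = \id$ — are routine consequences of $\iota$ and $\pi$ being $G$-equivariant linear maps with $\pi \circ \iota = \id_U$.
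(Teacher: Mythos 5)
Your argument is correct and is essentially the paper's own proof: the inclusion of $G$-algebras $\kk[U]\subseteq\kk[V]$ used there is exactly your $\pi^*$, the step showing $u\notin\NGV$ is identical, and the final restriction $g|_U$ is your $\iota^*$. You have merely made the equivariant maps $\iota,\pi$ and the identity $\iota^*\circ\pi^*=\id_{\kk[U]}$ explicit, which is fine but not a different route.
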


\begin{proof}
Take a $u\in U\setminus\mathcal{N}_{G,U}$ and an $f\in \kk[U]^{G}_{+}$ such
that $f(u)\ne 0$. As $U$ is a direct summand of $V$, we have an
inclusion of $G$-algebras $\kk[U]\subseteq \kk[V]$, hence we can view $f$ as an
element of $\kk[V]^{G}_{+}$. As $f(u)\ne 0$, we have $u\in
V\setminus\NGV$. Therefore there is a $g\in \kk[V]^{G}_{+,\le\sigma(G,V)}$ such
that $g(u)\ne 0$. Then $g|_{U}\in \kk[U]^{G}_{+,\le\sigma(G,V)}$ satisfies
$g|_{U}(u)\ne 0$, hence $\sigma(G,U)\le\sigma(G,V)$.
\end{proof}

The following basic result also appears in  Cziszter and Domokos \cite[Lemma~5.1]{DomokosCziszter}, but we give a simpler argument here:

\begin{Prop}\label{SigmaDirectSumFiniteGroups}
 Let $G$ be a finite group and suppose $W=V_1\oplus V_2$, where $V_1$, $V_2$ and $W$ are $G$-modules. Then $\sigma(G,W) = \max\{\sigma(G,V_1),\sigma(G,V_2)\}$.
\end{Prop}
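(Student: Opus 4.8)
The plan is to prove the inequality ``$\le$'' and then ``$\ge$'' separately, using that $G$ is finite so that $\mathcal{N}_{G,W} = \{0\}$ (and similarly for $V_1, V_2$), and that $V_1, V_2$ are direct summands of $W$.

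For the direction $\sigma(G,W) \ge \max\{\sigma(G,V_1), \sigma(G,V_2)\}$, I would simply invoke Lemma \ref{sigmDirectSummand}: since $V_1$ and $V_2$ are each direct summands of $W$, we get $\sigma(G,V_i) \le \sigma(G,W)$ for $i=1,2$, hence the maximum is at most $\sigma(G,W)$.

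For the reverse inequality, set $d := \max\{\sigma(G,V_1), \sigma(G,V_2)\}$ and take any $0 \ne w \in W = W \setminus \mathcal{N}_{G,W}$. Write $w = v_1 + v_2$ with $v_i \in V_i$; since $w \ne 0$, at least one of $v_1, v_2$ is nonzero, say $v_1 \ne 0$. Then $v_1 \in V_1 \setminus \mathcal{N}_{G,V_1}$, so there is a homogeneous $f \in \kk[V_1]^G_{+,\le \sigma(G,V_1)} \subseteq \kk[V_1]^G_{+,\le d}$ with $f(v_1) \ne 0$. The key point (as in the proof of Proposition \ref{propdctsum}) is that the projection $W \twoheadrightarrow V_1$ induces a $G$-algebra inclusion $\kk[V_1] \hookrightarrow \kk[W]$, under which $f$ becomes an invariant on $W$ of the same degree satisfying $f(w) = f(v_1) \ne 0$. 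This shows $\kk[W]^G_{\le d}$ is a $\sigma$-set, hence $\sigma(G,W) \le d$, completing the proof.

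I do not expect any genuine obstacle here: the argument is a near-verbatim adaptation of the proof of Proposition \ref{propdctsum} for the $\delta$-number, the only differences being that one ranges over all $w \in W \setminus \{0\}$ rather than only $w \in W^G \setminus \{0\}$, and that one uses Lemma \ref{sigmDirectSummand} in place of Proposition \ref{propsigmauleqv} for the easy inequality (this matters because $\sigma$ for submodules, unlike for direct summands, can fail to behave well for non-reductive groups — but here finiteness makes it moot anyway). The mild subtlety worth stating explicitly is why $v_1 \ne 0$ forces $v_1 \notin \mathcal{N}_{G,V_1}$: this is immediate since $G$ finite gives $\mathcal{N}_{G,V_1} = \{0\}$.
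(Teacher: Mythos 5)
Your argument is correct and is essentially the paper's own proof: for the nontrivial inequality you take a nonzero $w=v_1+v_2$, assume $v_1\ne 0$, pick $f\in\kk[V_1]^G_{+,\le\sigma(G,V_1)}$ with $f(v_1)\ne 0$, and view $f$ in $\kk[W]^G$ via the $G$-algebra inclusion $\kk[V_1]\subseteq\kk[W]$ so that $f(w)=f(v_1)\ne 0$, exactly as in the paper. The only (harmless) deviation is that for the easy inequality you invoke Lemma \ref{sigmDirectSummand} instead of Proposition \ref{propsigmauleqv}, both of which apply here since finite groups are reductive and $V_1,V_2$ are direct summands; your explicit remark that $\mathcal{N}_{G,V_1}=\{0\}$ makes a step the paper leaves implicit.
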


\begin{proof}
We have $d:=\max\{\sigma(G,V_1),\sigma(G,V_2)\}\le \sigma(G,W)$ by Proposition
\ref{propsigmauleqv}. Conversely, take a nonzero $w=v_{1}+v_{2}\in W$ with
$v_{i}\in V_{i}$ for $i=1,2$. Without loss we can assume $v_{1}\ne 0$. Then
there is an $f\in\kk[V_{1}]^{G}_{+,\le \sigma(G,V_{1})}$ with $f(v_{1})\ne 0$. As
in the proof of Proposition \ref{propdctsum}, we can view $f$ as an element of
$\kk[W]^{G}_{+,\le d}$ such
that $f(w)=f(v_{1}+v_{2})=f(v_{1})\ne 0$. Therefore, $\sigma(G,W)\le d$.
\end{proof}

Note that the above is not true for reductive algebraic groups in general; a
counterexample is provided in \cite[Remark~5.2]{DomokosCziszter}. However,
even for infinite groups the $\sigma$-value of vector invariants has an interesting stabilization
property, which was observed by Domokos
\cite[Remark 3.3]{DomokosSep}. As Domokos only remarks that the proof of the
following proposition can be given with the same methods as in his paper
\cite{DomokosSep} (where a similar result for $\beta_{\sep}$ is given), we
give the proof here for the sake of completeness.

\begin{Prop}[Domokos]\label{Domokos}
Assume $G$ is an arbitrary (possibly infinite) group acting linearly on an
$n$-dimensional vector space $V$ (the action need not even be rational). Then
\[
\sigma(G,V^{m})=\sigma(G,V^{n})\quad\quad\text{ for all }\,\,m\ge n.
\]
\end{Prop}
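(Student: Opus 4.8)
The plan is to show both inequalities $\sigma(G,V^m) \le \sigma(G,V^n)$ and $\sigma(G,V^n) \le \sigma(G,V^m)$ for $m \ge n$. The second is immediate: $V^n$ is a direct summand of $V^m$ (as $G$-modules, since the action on $V^m$ is diagonal), so Lemma~\ref{sigmDirectSummand} gives $\sigma(G,V^n) \le \sigma(G,V^m)$ at once — note this lemma was proved for arbitrary, even non-rational, group actions, which is exactly the generality we need here. So the content is in the first inequality.

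For $\sigma(G,V^m) \le \sigma(G,V^n)$, set $d := \sigma(G,V^n)$ and take an arbitrary $w = (w_1,\dots,w_m) \in V^m \setminus \mathcal{N}_{G,V^m}$; I want to produce an invariant of degree at most $d$ on $V^m$ that is nonzero at $w$. The key observation is that the $\kk$-span $U := \langle w_1,\dots,w_m\rangle \subseteq V$ has dimension at most $n$, so we can find an $m \times n$ matrix (equivalently a linear map $\kk^m \to \kk^n$) such that the corresponding ``substitution'' linear map $\phi\colon V^n \to V^m$, sending a tuple in $V^n$ to the $m$ tuples obtained as the given linear combinations, is $G$-equivariant (because $G$ acts diagonally, any linear combination with scalar coefficients commutes with the $G$-action) and has $w$ in its image, say $w = \phi(w')$ for some $w' \in V^n$. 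Concretely: pick coordinates so that $w_1,\dots,w_n$ span $U$ (after relabelling, or pad with zeros if $\dim U < n$), and express each $w_j$ for $j > n$ as a $\kk$-linear combination of $w_1,\dots,w_n$; the map $\phi$ is then ``first $n$ coordinates as given, remaining $m-n$ coordinates are those fixed linear combinations''. Pulling back along $\phi$ gives a $G$-algebra homomorphism $\phi^*\colon \kk[V^m] \to \kk[V^n]$ which is degree-preserving (it is dual to a linear map), hence restricts to $\phi^*\colon \kk[V^m]^G \to \kk[V^n]^G$ preserving degrees.

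Now first check $w' \notin \mathcal{N}_{G,V^n}$: since $w = \phi(w') \notin \mathcal{N}_{G,V^m}$, there is a homogeneous $F \in \kk[V^m]^G_+$ with $F(w) \ne 0$; then $\phi^*(F) \in \kk[V^n]^G_+$ satisfies $\phi^*(F)(w') = F(\phi(w')) = F(w) \ne 0$, so $w' \notin \mathcal{N}_{G,V^n}$. By definition of $d$ there is a homogeneous $h \in \kk[V^n]^G_{+,\le d}$ with $h(w') \ne 0$. The final step is to lift $h$ back to $V^m$: here I use that $\phi$ has a $G$-equivariant linear section, i.e. there is a $G$-equivariant linear projection $\psi\colon V^m \to V^n$ with $\psi \circ \phi = \mathrm{id}_{V^n}$ — indeed just project onto the first $n$ coordinates. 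Then $\psi^*\colon \kk[V^n] \to \kk[V^m]$ is a degree-preserving $G$-algebra map, and $g := \psi^*(h) \in \kk[V^m]^G_{+,\le d}$. We have $g(w) = h(\psi(\phi(w'))) = h(w') \ne 0$. Hence $w$ is separated from $0$ by an invariant of degree at most $d$, and since $w$ was arbitrary, $\sigma(G,V^m) \le d = \sigma(G,V^n)$.

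The only subtle point — and the thing to be careful about rather than a genuine obstacle — is the bookkeeping that makes $\phi$ and $\psi$ genuinely $G$-equivariant and mutually a split pair: this works precisely because on $V^m$ and $V^n$ the group acts by the \emph{same} matrices in each block (the diagonal action), so any block-scalar linear map $V^k \to V^l$ is automatically equivariant. One should also handle the trivial edge cases ($m = n$, or $w = 0$, which is excluded since $0 \in \mathcal{N}_{G,V^m}$ always) but these are vacuous. No continuity or rationality is needed anywhere, matching the stated generality.
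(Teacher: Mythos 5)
Your proof is correct and follows essentially the same route as the paper: the core mechanism in both is that scalar-coefficient linear maps between powers of $V$ are $G$-equivariant for the diagonal action and pull invariants back degree-preservingly (this is exactly what the paper isolates as Lemma~\ref{DomoksLemma}), and the easy inequality is Lemma~\ref{sigmDirectSummand} in both. The only cosmetic difference is that by taking $w'$ to be (a relabelled selection of) the first $n$ components of $w$ itself, your split pair $\phi,\psi$ lets you skip the paper's final substitution step back to the original point.
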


Note that for finite groups, by Proposition \ref{SigmaDirectSumFiniteGroups}
we have more precisely $\sigma(G,V^{m})=\sigma(G,V)$ for all $m$.

Under the hypotheses of the theorem, we first show the following:

\begin{Lemma}\label{DomoksLemma}
Let $v=(v_{1},\ldots,v_{m})$ and $u=(u_{1},\ldots,u_{m})\in V^{m}$ be such that
their components span the same $\kk$-vector subspace of $V$, i.e.
\[
\langle v_{1},\ldots, v_{m}\rangle_{\kk}=\langle u_{1},\ldots, u_{m}\rangle_{\kk}.
\]
Then we have
\[
v\in\mathcal{N}_{G,V^{m}}\Leftrightarrow u\in\mathcal{N}_{G,V^{m}}.
\]
\end{Lemma}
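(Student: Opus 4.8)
The plan is to exploit the fact that membership in $\mathcal{N}_{G,V^m}$ depends only on the subspace spanned by the components of the tuple, not on the particular spanning tuple. First I would observe that there is a $g \in \GL_m(\kk)$ (acting on the ``copy index'' of $V^m$) carrying $v$ to $u$ is too strong a demand — instead, the right tool is the action of $\GL_m(\kk)$ on $V^m \cong V \otimes \kk^m$, but since $u$ and $v$ need only span the \emph{same} subspace and need not be related by an invertible linear change of the $m$ coordinates (their components could be linearly dependent in different ways), I would instead argue directly as follows. Write $W = \langle v_1,\dots,v_m\rangle_\kk$. Both $v$ and $u$ lie in $W^m \subseteq V^m$, and $W^m$ is a $G$-submodule of $V^m$ (here using that $W$ is $G$-stable: indeed $W = \langle G\cdot\{v_1,\dots,v_m\}\rangle$? — no, one must be careful, $W$ need not be $G$-stable).

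Let me restart the core idea more carefully. The cleanest route is: it suffices to prove that for a single tuple $v = (v_1,\dots,v_m)$, whether $v \in \mathcal{N}_{G,V^m}$ depends only on $\langle v_1,\dots,v_m\rangle_\kk$, and the standard way to see this is to use the $\GL_m(\kk)$-action. The key observation is that $\GL_m(\kk)$ acts on $V^m$ by $\kk$-linear combinations of the $m$ copies, this action commutes with the $G$-action (because $G$ acts diagonally on the $m$ copies and the $\GL_m$-action only mixes copies), and therefore $\GL_m(\kk)$ permutes the fibers of the invariant ring: concretely, for $h \in \GL_m(\kk)$ and $f \in \kk[V^m]^G$, the function $f \circ h$ is again in $\kk[V^m]^G$, and it is homogeneous of the same degree, so $h$ maps $\mathcal{N}_{G,V^m}$ bijectively onto itself. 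Now if $v$ and $u$ span the same subspace $W$ of dimension $r \le m$, pick a basis $w_1,\dots,w_r$ of $W$; then both $v$ and the padded tuple $(w_1,\dots,w_r,0,\dots,0)$ can be obtained from one another by elements of $\GL_m(\kk)$ acting on $V^m$ — wait, that is still not literally true when $v$ has more than $r$ nonzero-but-dependent components unless we allow non-invertible maps.

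The honest fix, and the step I expect to be the main obstacle, is handling linear dependencies: a tuple whose components are linearly dependent cannot in general be moved to a ``standard'' spanning tuple by an \emph{invertible} matrix. I would resolve this by a limiting / closure argument or by the following trick: it is enough to show $v \in \mathcal{N}_{G,V^m} \iff (w_1,\dots,w_r,0,\dots,0) \in \mathcal{N}_{G,V^m}$ where $w_1,\dots,w_r$ is a basis of $W$ extended arbitrarily. Write $v_j = \sum_{i=1}^r c_{ij} w_i$ for a matrix $C = (c_{ij}) \in \kk^{r\times m}$ of rank $r$, and conversely $w_i = \sum_j d_{ji} v_j$ for some $D \in \kk^{m \times r}$. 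Pad $C$ to an $m\times m$ matrix $\tilde C$ by adding zero rows, so $\tilde C \cdot (w_1,\dots,w_r,0,\dots,0)^{\mathrm{t}} = v$ as elements of $V^m$ (thinking of $V^m$ as column vectors over $V$). Then for any homogeneous $f \in \kk[V^m]^G_+$, the composite $f \circ \tilde C \in \kk[V^m]$ is still $G$-invariant of positive degree (because $\tilde C$ commutes with the diagonal $G$-action), hence vanishes on $\mathcal{N}_{G,V^m}$; evaluating, $f(v) = (f\circ\tilde C)(w_1,\dots,w_r,0,\dots,0)$, so $(w_1,\dots,w_r,0,\dots,0)\in\mathcal{N}_{G,V^m} \Rightarrow v \in \mathcal{N}_{G,V^m}$. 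The reverse implication is symmetric using the padded matrix $\tilde D$. Since this argument is symmetric in $v$ and $u$ (both reduce to the \emph{same} standard tuple attached to $W$), we conclude $v \in \mathcal{N}_{G,V^m} \iff u \in \mathcal{N}_{G,V^m}$.

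To summarize the steps in order: (1) reduce to comparing an arbitrary tuple with its components in $W$ to a fixed ``standard'' tuple $(w_1,\dots,w_r,0,\dots,0)$ built from a basis of $W$; (2) observe that the monoid $\kk^{m\times m}$ acts on $V^m$ by mixing the $m$ copies and that this action commutes with the diagonal $G$-action, so precomposition sends $\kk[V^m]^G$ into itself preserving degree; (3) deduce that if $A \in \kk^{m\times m}$ and $x \in \mathcal{N}_{G,V^m}$ then $A x \in \mathcal{N}_{G,V^m}$; (4) write the linear relations expressing $v$ in terms of the standard tuple and vice versa as (padded) square matrices, and apply (3) both ways; (5) run the same argument for $u$ and combine. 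The only delicate point — and the one to state carefully — is point (2)/(3) for non-invertible $A$, which is exactly what lets us bypass the need for an invertible change of coordinates and thereby handle linearly dependent tuples; everything else is bookkeeping with the identification $V^m \cong \kk^m \otimes V$.
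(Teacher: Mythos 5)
Your proof is correct and rests on exactly the paper's mechanism: precomposing an invariant $f\in\kk[V^m]^G_+$ with a $\kk$-linear map that mixes the $m$ copies of $V$ still gives a $G$-invariant with zero constant term (since such maps commute with the diagonal $G$-action), so nullcone membership transfers along arbitrary, not necessarily invertible, coefficient matrices. The only difference is bookkeeping: the paper dispenses with your intermediate standard tuple $(w_1,\dots,w_r,0,\dots,0)$ and simply writes $v_i=\sum_j\alpha_{ij}u_j$ directly (equal spans make this possible), which is precisely the non-invertible substitution trick you correctly identified as the crux.
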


\begin{proof}
Assume $v\not\in\mathcal{N}_{G,V^{m}}$. Then there is an $f\in \kk[V^{m}]^{G}_{+}$
with $f(v)\ne 0$. By assumption, we can find $\alpha_{ij}\in \kk$ such that
\[
v_{i}=\sum_{j=1}^{m}\alpha_{ij}u_{j}\quad \text{ for all }i=1,\ldots,m.
\]
Write $f=f(x_{1},\ldots,x_{m})\in\kk[V^{m}]^{G}$, where each $x_{j}$ belongs
to a set of coordinates of an element of $V$, and set
\[
h(x_{1},\ldots,x_{m}):=f\left(\sum_{j=1}^{m}\alpha_{1,j}x_{j},\sum_{j=1}^{m}\alpha_{2,j}x_{j},\ldots,\sum_{j=1}^{m}\alpha_{m,j}x_{j}\right)\in\kk[V^{m}]_{+}.
\]
It is immediately checked that $h$ inherits $G$-invariance from $f$, so
$h\in\kk[V^{m}]^{G}_{+}$. Now
\begin{eqnarray*}
h(u)=h(u_{1},\ldots,u_{m})&=&f\left(\sum_{j=1}^{m}\alpha_{1,j}u_j,\sum_{j=1}^{m}\alpha_{2,j}u_j,\ldots,\sum_{j=1}^{m}\alpha_{m,j}u_j\right)\\
&=&f(v_{1},\ldots,v_{m})=f(v)\ne 0,
\end{eqnarray*}
hence $u\not\in \mathcal{N}_{G,V^{m}}$. We have shown: If $\langle
v_{1},\ldots, v_{m}\rangle_{\kk}\subseteq\langle u_{1},\ldots,
u_{m}\rangle_{\kk}$, then we have the implication: 
$v\not\in\mathcal{N}_{G,V^{m}}\Rightarrow u\not\in \mathcal{N}_{G,V^{m}}$. The
reverse implication follows in the same way, so we are done.
\end{proof}

\begin{proof}[Proof of Proposition ~\ref{Domokos}]
By Lemma \ref{sigmDirectSummand} we have $\sigma(G,V^{n})\le\sigma(G,V^{m})$,
so we have to show the reverse inequality.
Take a  point $v=(v_{1},\ldots,v_{m})\in
V^{m}\setminus\mathcal{N}_{G,V^{m}}$. As  $\dim V=n$, we can find a point
$u=(u_{1},\ldots,u_{n},0,\ldots,0)\in V^{m}$ such that $\langle
u_{1},\ldots,u_{n}\rangle=\langle v_{1},\ldots,v_{m}\rangle$. By Lemma
\ref{DomoksLemma} we have $u\not\in\mathcal{N}_{G,V^{m}}$. Hence there is an
$f\in \kk[V^{m}]^{G}_{+}$ such that $f(u)\ne 0$. Then $f|_{V^{n}}\in \kk[V^{n}]^{G}_{+}$ satisfies
$f(\tilde{u})\ne 0$, where $\tilde{u}=(u_{1},\ldots,u_{n})\in
V^{n}$. Therefore, $\tilde{u}\not\in\mathcal{N}_{G,V^{n}}$, so there is an
$\tilde{f}\in \kk[V^{n}]^{G}_{+,\le \sigma(G,V^{n})}$ such that
$\tilde{f}(\tilde{u})\ne 0$. As we have a $G$-algebra inclusion
$\kk[V^{n}]\subseteq \kk[V^{m}]$, we can take $\tilde{f}$ as an element of
$\kk[V^{m}]^{G}_{+,\le\sigma(G,V^{n})}$ satisfying
$\tilde{f}(u)=\tilde{f}(\tilde{u})\ne 0$. As in the proof of Lemma
\ref{DomoksLemma}, there is an $h\in \kk[V^{m}]^{G}_{+,\le\sigma(G,V^{n})}$
satisfying $h(v)=\tilde{f}(u)\ne 0$. This shows $\sigma(G,V^{m})\le\sigma(G,V^{n})$.
\end{proof}

Now we restrict again to finite groups and give two corollaries of
Propositions~\ref{propsigmauleqv} and~\ref{SigmaDirectSumFiniteGroups}.

\begin{Corollary}\label{corsigmavreg} Let $G$ be a finite group. Then $\sigma(G)=\sigma(G,V_{\reg})$ where $V_{\reg}$ denotes the regular representation of $G$ over $\kk$.
\end{Corollary}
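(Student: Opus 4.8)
The plan is to mimic exactly the argument used for $\delta(G)$ in Proposition~\ref{propvreg}, now using the analogous tools for $\sigma$ that were just established. The key observation is that every $G$-module $V$ embeds as a \emph{direct summand} of a power of the regular representation. Indeed, for a finite group $G$ and any $G$-module $V$ of dimension $n$, the surjection $(\kk G)^{n} \twoheadrightarrow V^{*}$ splits (since $V_{\reg}$ is a projective $\kk G$-module, being free, and projectives are also injective over $\kk G$, so any quotient of a free module is a summand; alternatively, dualize to get $V \hookrightarrow V_{\reg}^{n}$ and note $V_{\reg}^{n}$ is injective hence $V$ is a summand). So $V$ is a direct summand of $V_{\reg}^{n}$.

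First I would invoke Lemma~\ref{sigmDirectSummand}: since $V$ is a direct summand of $V_{\reg}^{n}$, we get $\sigma(G,V) \leq \sigma(G,V_{\reg}^{n})$. Next I would apply Proposition~\ref{SigmaDirectSumFiniteGroups} (via its iterated form, exactly as in Remark~\ref{remdctsum}) to conclude $\sigma(G,V_{\reg}^{n}) = \sigma(G,V_{\reg})$. Chaining these gives $\sigma(G,V) \leq \sigma(G,V_{\reg})$ for every $G$-module $V$. Taking the supremum over all $V$ yields $\sigma(G) \leq \sigma(G,V_{\reg})$, and the reverse inequality $\sigma(G,V_{\reg}) \leq \sigma(G)$ is immediate from the definition of $\sigma(G)$ as a supremum. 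Hence $\sigma(G) = \sigma(G,V_{\reg})$.

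There is essentially no obstacle here; the only point requiring a little care is justifying that $V$ is a direct summand of a power of $V_{\reg}$ rather than merely a submodule — this is where the argument genuinely differs from the $\delta$-case, since Lemma~\ref{sigmDirectSummand} (unlike Proposition~\ref{propuleqv}) requires a direct summand, not just a submodule. For finite groups this is standard: $\kk G$ is a self-injective (Frobenius) algebra, so $V_{\reg}^{n}$ is injective, and therefore the embedding $V \hookrightarrow V_{\reg}^{n}$ from the proof of Proposition~\ref{propvreg} splits. (Equivalently, one could cite that any permutation module quotient situation here splits, or dualize the splitting of the surjection $(\kk G)^{n} \twoheadrightarrow V^{*}$, which splits because $V^{*}$ has a free — hence projective — cover of rank $n$ and $\kk G$ projectives are injective.) With that in hand, the rest is a one-line combination of the two previous results.
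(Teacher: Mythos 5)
Your reduction to $V_{\reg}$ has a genuine gap: the claim that every $G$-module $V$ is a direct summand of $V_{\reg}^{n}$ is false in exactly the situation this paper cares about, namely when $p$ divides $|G|$. A direct summand of a free module is projective, so if $V$ were a summand of $V_{\reg}^{n}=(\kk G)^{n}$ then $V$ would have to be projective; non-projective modules exist whenever $\kk G$ is not semisimple (for instance the trivial module for $G=\Z/p\Z$ in characteristic $p$, which embeds into the indecomposable module $\kk G$ as its socle but is never a summand of a free module). Both of your justifications fail at the same point: a surjection $(\kk G)^{n}\twoheadrightarrow V^{*}$ splits only if $V^{*}$ is projective, and injectivity of the \emph{ambient} module $V_{\reg}^{n}$ does not split the embedding $V\hookrightarrow V_{\reg}^{n}$ --- for that you would need $V$ itself to be injective. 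Self-injectivity of $\kk G$ gives projective $=$ injective, which does not help for an arbitrary $V$. Consequently Lemma~\ref{sigmDirectSummand} cannot be invoked this way.

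The paper's proof (``as the proof of Proposition~\ref{propvreg}'') needs only the submodule embedding $V\hookrightarrow V_{\reg}^{n}$: finite groups are reductive (for $0\ne v\in V^{G}$, the norm $\prod_{g\in G}g\cdot x$ of a linear form $x$ with $x(v)\ne 0$ is a positive-degree invariant not vanishing at $v$), so Proposition~\ref{propsigmauleqv} gives $\sigma(G,V)\le\sigma(G,V_{\reg}^{n})$, and iterating Proposition~\ref{SigmaDirectSumFiniteGroups} gives $\sigma(G,V_{\reg}^{n})=\sigma(G,V_{\reg})$; taking the supremum over $V$ finishes, exactly as in your last step. So your overall architecture is right, but the direct-summand step must be replaced by the submodule inequality for reductive groups.
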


\begin{proof}
As the proof of Proposition \ref{propvreg}.
\end{proof}

Recall that the decomposition of the regular representation into indecomposables gives the complete list
of projective indecomposable modules. As a consequence of this,
Proposition \ref{SigmaDirectSumFiniteGroups} and the above corollary, we have

\begin{Corollary}\label{corprojective} Let $G$ be a finite group. Then $$\sigma(G) = \max\{\sigma(G,U)\mid U \ \text{is a projective indecomposable $G$-module}\}.$$
\end{Corollary}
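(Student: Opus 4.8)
The plan is to combine the stabilisation behaviour of $\sigma$ on direct sums (Proposition~\ref{SigmaDirectSumFiniteGroups}) with the structure theory of the group algebra $\kk G$. First I would recall that $\sigma(G)=\sigma(G,V_{\reg})$ by Corollary~\ref{corsigmavreg}, so it suffices to compute $\sigma(G,V_{\reg})$. The regular representation decomposes as a direct sum $V_{\reg}=\bigoplus_{i=1}^{k} U_i^{m_i}$ where the $U_i$ are the projective indecomposable $\kk G$-modules and each appears with multiplicity $m_i\ge 1$ (equal to the dimension of the corresponding simple module, though we do not need this). Applying Proposition~\ref{SigmaDirectSumFiniteGroups} repeatedly (via the obvious induction, as in Remark~\ref{remdctsum}) gives
\[
\sigma(G,V_{\reg})=\max\{\sigma(G,U_i^{m_i})\mid i=1,\ldots,k\}=\max\{\sigma(G,U_i)\mid i=1,\ldots,k\},
\]
where the last equality again uses Proposition~\ref{SigmaDirectSumFiniteGroups} (more precisely the remark following Proposition~\ref{Domokos}, that $\sigma(G,U^m)=\sigma(G,U)$ for finite groups) to strip off the multiplicities.

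This already yields $\sigma(G)=\max\{\sigma(G,U)\mid U\text{ a projective indecomposable }G\text{-module}\}$: the right-hand side is $\le\sigma(G)$ trivially since each $U_i$ is a $G$-module, and $\ge\sigma(G)$ by the displayed equalities together with Corollary~\ref{corsigmavreg}. The only genuinely external input is the fact that every projective indecomposable $\kk G$-module occurs as a direct summand of $V_{\reg}$ (equivalently, the list of indecomposable summands of $\kk G$ is exactly the list of projective indecomposables); this is standard representation theory and is exactly what the sentence preceding the corollary is citing, so I would simply invoke it. I expect no real obstacle here — the argument is a short bookkeeping exercise once Proposition~\ref{SigmaDirectSumFiniteGroups} and the decomposition of $\kk G$ are in hand; if anything, the only point requiring a word of care is making sure one is allowed to discard multiplicities, which is handled by the finite-group sharpening of Domokos' stabilisation result noted just after Proposition~\ref{Domokos}.

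\begin{proof}
By Corollary~\ref{corsigmavreg} we have $\sigma(G)=\sigma(G,V_{\reg})$. Write $V_{\reg}=\bigoplus_{i=1}^{k}U_{i}^{m_{i}}$ as a direct sum of projective indecomposable $G$-modules, with $m_{i}\ge 1$; by the remark preceding the statement, $U_{1},\ldots,U_{k}$ is, up to isomorphism, a complete list of the projective indecomposable $G$-modules. Applying Proposition~\ref{SigmaDirectSumFiniteGroups} inductively (as in Remark~\ref{remdctsum}) gives
\[
\sigma(G,V_{\reg})=\max_{1\le i\le k}\sigma\!\left(G,U_{i}^{m_{i}}\right),
\]
and by the remark following Proposition~\ref{Domokos} we have $\sigma(G,U_{i}^{m_{i}})=\sigma(G,U_{i})$ for each $i$. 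Hence $\sigma(G)=\max\{\sigma(G,U_{i})\mid 1\le i\le k\}$, which is exactly $\max\{\sigma(G,U)\mid U\text{ is a projective indecomposable }G\text{-module}\}$.
\end{proof}
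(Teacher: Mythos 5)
Your proof is correct and follows the same route as the paper: reduce to the regular representation via Corollary~\ref{corsigmavreg}, decompose $V_{\reg}$ into its indecomposable (hence projective indecomposable) summands, and apply Proposition~\ref{SigmaDirectSumFiniteGroups} inductively to take the maximum (which also disposes of the multiplicities). No issues.
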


\begin{Prop}\label{propsigmanormsubgroup} Let $G$ be a group and let $N$ be a normal subgroup of $G$ with finite index. Let $V$ be a $G$-module. Then
\[\sigma(G,V) \leq \sigma(N,V)\sigma(G/N)\le \sigma(N)\sigma(G/N),\]
so particularly we have $\sigma(G)\le\sigma(N)\sigma(G/N)$.
\end{Prop}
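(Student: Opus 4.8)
The plan is to produce, for each separating-type problem on $V$ as a $G$-module, an explicit $\sigma$-set of degree at most $\sigma(N,V)\cdot\sigma(G/N)$ built in two stages: first separate points from the nullcone using $N$-invariants of bounded degree, then average/combine these over the finite quotient $G/N$ using the classical trick that produces $G$-invariants of controlled degree from $N$-invariants. First I would fix $v\in V\setminus\mathcal{N}_{G,V}$. Since $\mathcal{N}_{G,V}\subseteq\mathcal{N}_{N,V}$ (every $N$-invariant of positive degree is a fortiori not automatically $G$-invariant, but the containment of nullcones goes the right way because $\kk[V]^G_+\subseteq\kk[V]^N_+$), we get $v\notin\mathcal{N}_{N,V}$, so there is a homogeneous $h\in\kk[V]^N_{+,\le\sigma(N,V)}$ with $h(v)\ne0$.

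The key step is to pass from the $N$-invariant $h$ to a $G$-invariant. Let $d:=\deg h\le\sigma(N,V)$, and consider the finite-dimensional $G/N$-module $W:=\kk[V]^N_d$ (the $G$-action descends to the quotient $G/N$ since $N$ acts trivially on its own invariants). The point $h\in W$ is not in the nullcone $\mathcal{N}_{G/N,W}$: indeed $h(v)\ne0$, and evaluation at $v$ is a point of $W^*$, so there is a $G/N$-invariant $\Phi\in\kk[W]^{G/N}_{+,\le\sigma(G/N,W)}\subseteq\kk[W]^{G/N}_{+,\le\sigma(G/N)}$ with $\Phi(h)\ne0$. Now $\Phi$ is a polynomial of degree $\le\sigma(G/N)$ in the coordinates of $W$, i.e.\ in a basis of $\kk[V]^N_d$; substituting that basis of degree-$d$ $N$-invariants produces a polynomial $f:=\Phi(\text{basis of }\kk[V]^N_d)\in\kk[V]$ which is homogeneous of degree $\le d\cdot\sigma(G/N)\le\sigma(N,V)\sigma(G/N)$, lies in $\kk[V]^G$ because $\Phi$ is $G/N$-invariant and the coordinate functions transform among themselves under $G$, and satisfies $f(v)=\Phi(h)\ne0$ (here one must be careful: $f(v)$ equals $\Phi$ evaluated at the vector of values $(b_i(v))_i$, which is exactly the point $h\in W$ identified via evaluation at $v$ — so this is literally $\Phi(h)$). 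Hence $f\in\kk[V]^G_{+,\le\sigma(N,V)\sigma(G/N)}$ separates $v$ from $0$.

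This shows $\kk[V]^G_{\le\sigma(N,V)\sigma(G/N)}$ is a $\sigma$-set, giving $\sigma(G,V)\le\sigma(N,V)\sigma(G/N)$. The second inequality $\sigma(N,V)\le\sigma(N)$ is immediate from the definition of $\sigma(N)$ as a supremum over $N$-modules (and $V$ is an $N$-module by restriction), so $\sigma(G,V)\le\sigma(N)\sigma(G/N)$; taking the supremum over all $G$-modules $V$ yields $\sigma(G)\le\sigma(N)\sigma(G/N)$.

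The main obstacle is making precise the identification ``$f(v)=\Phi(h)$'' and checking $G$-invariance of $f$ simultaneously. The cleanest formulation: let $b_1,\dots,b_r$ be a $\kk$-basis of $\kk[V]^N_d$, so they are the coordinate functions on $W^*$; the map $v\mapsto(b_1(v),\dots,b_r(v))$ is a $G$-equivariant map $V\to W^*$ (with $G$ acting on $W^*$ through $G/N$), and $\Phi\in\kk[W]^{G/N}=\kk[(W^*)^*]^{G/N}$ pulls back along this map to a $G$-invariant $f$ on $V$ with $f(v)=\Phi$ evaluated at the image point. One should double-check the variance conventions (whether one needs $W$ or $W^*$, and that $g\cdot b_i$ is again a $\kk$-linear combination of the $b_j$, which holds because $G$ permutes/mixes the degree-$d$ $N$-invariants since $N$ is normal), but no genuinely hard estimate is involved; the degree bound $\deg f\le d\cdot\deg\Phi$ is automatic from substituting degree-$d$ forms into a degree-$\deg\Phi$ polynomial.
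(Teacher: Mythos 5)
Your construction is essentially correct, and it packages the argument differently from the paper. The paper fixes a finite $\sigma$-set $f_1,\dots,f_n\in\kk[V]^N_+$ of degrees at most $\sigma(N,V)$ and a left transversal $g_1,\dots,g_r$ of $N$ in $G$, views the vector of values $\bigl(g_i(f_j)(v)\bigr)_{i,j}$ as a nonzero point of $n$ copies of the regular representation of $G/N$, applies a $G/N$-invariant of degree at most $\sigma(G/N)$ to it, and then verifies $G$-invariance of the composite $\hat h=h(g_1(f_1),\dots,g_r(f_n))$ by an explicit computation. You instead take a single homogeneous $N$-invariant $h$ of degree $d\le\sigma(N,V)$ with $h(v)\ne0$, use the intrinsic finite-dimensional $G/N$-module $W=\kk[V]^N_d$ (legitimate because $N$ is normal and the action preserves degrees), and pull an invariant of the finite group $G/N$ back along the $G$-equivariant polynomial map $u\mapsto(b_1(u),\dots,b_r(u))$ from $V$ to $W^*$, where $b_1,\dots,b_r$ is your basis of $W$; invariance of the pullback then comes for free from equivariance rather than from a hands-on check, and the degree bound $d\cdot\sigma(G/N)\le\sigma(N,V)\sigma(G/N)$ is automatic. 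Both proofs work in the paper's generality (no reductivity is used) and both rest on the same two facts: $\kk[V]^G_+\subseteq\kk[V]^N_+$, and finiteness of $G/N$.

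Three points need tightening. First, the stated containment is backwards: from $\kk[V]^G_+\subseteq\kk[V]^N_+$ one gets $\mathcal{N}_{N,V}\subseteq\mathcal{N}_{G,V}$, and it is this inclusion (not the one you wrote) that yields $v\notin\mathcal{N}_{N,V}$. Second, the identification ``$f(v)=\Phi(h)$'' in your main paragraph is false: the vector $(b_1(v),\dots,b_r(v))$ is the evaluation functional $\mathrm{ev}_v\in W^*$, not the point $h\in W$ (if $h=\sum_i c_ib_i$, its coordinate vector is $(c_i)$, which has nothing to do with $(b_i(v))$). So the argument must be run on $W^*$, exactly as in your closing ``cleanest formulation'': polynomial functions on $W^*$ are polynomials in the $b_i$, and $\Phi$ must be chosen with $\Phi(\mathrm{ev}_v)\ne0$, not $\Phi(h)\ne0$. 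Third, to obtain such a $\Phi$ of degree at most $\sigma(G/N)$ you need $\mathrm{ev}_v\notin\mathcal{N}_{G/N,W^*}$; this holds because $\mathrm{ev}_v\ne0$ (it does not annihilate $h$) and the nullcone of any module over the finite group $G/N$ is $\{0\}$ --- a fact the paper invokes explicitly and which you should too, since the linear functional witnessing $\mathrm{ev}_v\ne0$ is of course not itself $G/N$-invariant. With these repairs, which your final paragraph essentially anticipates, the proof is complete, including the routine final steps $\sigma(N,V)\le\sigma(N)$ and the supremum over all $G$-modules $V$.
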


\begin{proof} 
Only the first inequality needs to be shown. Choose a  finite $\sigma$-subset
$\{f_1,f_2,\ldots, f_n\}$ of $\kk[V]_{+}^N$, with $\deg(f_i) \leq \sigma(N,V)$
for all $i= 1,\ldots, n$. Take a left-transversal $\{g_1,g_2,\ldots,g_r\}$ of $N$ in $G$, that is to
say, $G=\bigcup_{i=1}^r g_iN$ where $r=[G:N]$. Let $v \in V \setminus
\mathcal{N}_{G,V}$. As a $G$-invariant separating $v$ from zero is clearly
also an $N$-invariant, we see that $v \in V \setminus \mathcal{N}_{N,V}$. Consequently, the vector
\[(f_1(v),f_2(v),\ldots,f_n(v)) \in \kk^n\]
is not zero, and nor is the vector
\[\hat{v} := (g_{1}(f_1)(v),g_{2}(f_1)(v),\ldots,g_{r}(f_{1})(v),\ldots,g_{1}(f_{n})(v),\ldots,g_{r}(f_{n})(v))\in \kk^{nr}.\]
We may define an action on $\kk^{nr}$ so that it becomes isomorphic to $n$
copies of the regular representation of $G/N$, i.e. to $V_{\reg,G/N}^{n}$ in such a
way that the action of $G/N$ on $\hat{v}$ is given by
\begin{equation}\label{ActionOnknr}
g^{-1}N\cdot \hat{v}= ((gg_{1})(f_1)(v),\ldots,(gg_{r})(f_{1})(v),\ldots,(gg_{1})(f_{n})(v),\ldots,(gg_{r})(f_{n})(v))
\end{equation}
for all $g\in G$. Since $G/N$ is finite, its nullcone is zero, and as
$\hat{v}\ne 0$ we can find an invariant $h \in \kk[V_{\reg,G/N}^{n}]_{+,\leq \sigma(G/N)}^{G/N}$ such that $h(\hat{v}) \neq 0$. 
Now consider the
polynomial $$\hat{h}:=h(g_1(f_1),g_2(f_1),\ldots,g_r(f_1),\ldots,
g_1(f_n),\ldots, g_r(f_n))\in \kk[V]_{+}.$$ Notice that $\hat{h}(v) = h(\hat{v}) \neq 0$, and
that $\deg(\hat{h}) \leq \sigma(N,V)\sigma(G/N)$. It remains to show that
$\hat{h}$ is $G$-invariant. From the definition of the
action of $G/N$ on $\kk^{nr}$, we see that for any $g\in G$ and $u\in V$
we have
\begin{eqnarray*}
(g\hat{h})(u)&=&h(gg_1(f_1),\ldots,gg_r(f_1),\ldots,
gg_1(f_n),\ldots, gg_r(f_n))(u)\\
&=&h(gg_1(f_1)(u),\ldots,gg_r(f_1)(u),\ldots,
gg_1(f_n)(u),\ldots, gg_r(f_n)(u))\\
&\stackrel{~\eqref{ActionOnknr}}{=}&h(g^{-1}N\cdot (g_1(f_1)(u),\ldots,g_r(f_1)(u),\ldots,
g_1(f_n)(u),\ldots, g_r(f_n)(u)))\\
&=&(gN\cdot h)( g_1(f_1)(u),\ldots,g_r(f_1)(u),\ldots,
g_1(f_n)(u),\ldots, g_r(f_n)(u))\\
&\stackrel{(*)}{=}&h( g_1(f_1)(u),\ldots,g_r(f_1)(u),\ldots,
g_1(f_n)(u),\ldots, g_r(f_n)(u))=\hat{h}(u),
\end{eqnarray*}
where in $(*)$ we used that $h$ is $G/N$ invariant. Hence,
$g(\hat{h})=\hat{h}$ for all $g\in G$, that is $\hat{h} \in \kk[V]^G$, so we are done.
\end{proof}

\begin{Prop}\label{propsigmasubgroup} 
Let $G$ be a group and let $H$ be a subgroup of $G$ with finite index. Let $V$ be a $G$-module. Then
\[\sigma(G,V) \leq \sigma(H,V)[G:H]\le\sigma(H)[G:H],\]
so in particular we have $\sigma(G)\le\sigma(H)[G:H]$.
\end{Prop}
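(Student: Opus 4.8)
The plan is to mimic the proof of Proposition~\ref{propsigmanormsubgroup}, but since $H$ need not be normal I would replace the invariant built from the quotient $G/N$ by a \emph{relative norm} over the left cosets of $H$. As in that proof, only the first inequality $\sigma(G,V)\le\sigma(H,V)[G:H]$ needs an argument: the bound $\sigma(H,V)\le\sigma(H)$ is immediate from the definition, and taking the supremum over all $G$-modules $V$ then yields $\sigma(G)\le\sigma(H)[G:H]$. Nothing below uses that $G$ is finite.

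To prove the first inequality I would fix $v\in V\setminus\NGV$ and aim to exhibit a homogeneous $f\in\kk[V]^G_+$ of degree at most $\sigma(H,V)[G:H]$ with $f(v)\ne 0$. I would choose a finite $\sigma$-subset $\{f_1,\ldots,f_n\}$ of $\kk[V]^H_+$ with $\deg f_i\le\sigma(H,V)$, and a left transversal $g_1,\ldots,g_r$ of $H$ in $G$, so $G=\bigcup_{j=1}^r g_jH$ with $r=[G:H]$. The key observation is that $g_j^{-1}v\notin\mathcal{N}_{H,V}$ for every $j$: indeed, choosing $h\in\kk[V]^G_+$ with $h(v)\ne 0$, one has that $h$ is in particular an $H$-invariant of positive degree and $h(g_j^{-1}v)=(g_jh)(v)=h(v)\ne 0$. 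Since $\{f_1,\ldots,f_n\}$ is a $\sigma$-set for $H$, it follows that the vector $\bigl(f_1(g_j^{-1}v),\ldots,f_n(g_j^{-1}v)\bigr)\in\kk^n$ is nonzero for each $j$.

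Next I would pick a tuple $(c_1,\ldots,c_n)\in\kk^n$ outside the $r$ proper subspaces $\bigl\{c\in\kk^n:\sum_{i=1}^n c_if_i(g_j^{-1}v)=0\bigr\}$, $j=1,\ldots,r$, which is possible since $\kk$ is infinite and no vector space over an infinite field is a finite union of proper subspaces. Setting $F:=\sum_{i=1}^n c_if_i\in\kk[V]^H_+$, we have $\deg F\le\sigma(H,V)$ and $F(g_j^{-1}v)\ne 0$ for all $j$, and I would form the relative norm
\[
\hat h:=\prod_{j=1}^r g_j(F)\in\kk[V].
\]
Because $F$ is $H$-invariant, $g_j(F)$ depends only on the coset $g_jH$; since left multiplication by any $g\in G$ permutes the left cosets of $H$, a reindexing gives $g(\hat h)=\prod_{j=1}^r(gg_j)(F)=\prod_{j=1}^r g_j(F)=\hat h$, so $\hat h\in\kk[V]^G$. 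Furthermore $\hat h$ has zero constant term (as $F$ does), $\deg\hat h\le r\deg F\le\sigma(H,V)[G:H]$, and $\hat h(v)=\prod_{j=1}^r F(g_j^{-1}v)\ne 0$. Writing $\hat h=\sum_{e\ge 1}\hat h_e$ with $\hat h_e$ its homogeneous part of degree $e$ (each $\hat h_e$ is again $G$-invariant because $G$ acts degree-preservingly), some $\hat h_e$ with $e\ge 1$ satisfies $\hat h_e(v)\ne 0$, and this $\hat h_e$ lies in $\kk[V]^G_{+,\le\sigma(H,V)[G:H]}$ and does not vanish at $v$. Hence $\sigma(G,V)\le\sigma(H,V)[G:H]$.

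The one real obstacle is that the naive norm $\prod_{j=1}^r g_j(f)$ of a \emph{single} $H$-invariant $f$ may vanish at $v$ although $v\notin\NGV$, because only some of the factors $f(g_j^{-1}v)$ are forced to be nonzero; the generic choice of the linear combination $F=\sum_i c_if_i$ is precisely what fixes this, and it is the only point where the infinitude of $\kk$ enters. The remaining ingredients — $G$-stability of the nullcone, invariance of the relative norm under left multiplication, and the passage to homogeneous components — are routine, exactly as in Proposition~\ref{propsigmanormsubgroup}.
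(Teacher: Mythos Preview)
Your argument is correct. Both your proof and the paper's hinge on the relative norm $\prod_j g_j(\cdot)$ of an $H$-invariant, but you handle the issue that a single $f_i$ may have $\prod_j g_j(f_i)(v)=0$ differently. The paper packages the $f_i$ into $\sum_{j=1}^n f_j T^{j-1}\in\kk[V]^H[T]$, takes the relative norm $z(T)$ of this, and uses the coefficients of $z(T)$ in $T$ as a single $\sigma$-set for $G$; the domain $\kk[T]$ replaces your avoidance-of-hyperplanes step. Your version works point by point, choosing for each $v$ a generic linear combination $F=\sum_i c_i f_i$ so that all factors of the norm are nonzero, which requires $\kk$ to be infinite. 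The paper's auxiliary-variable trick thus yields a uniform $\sigma$-set and avoids any cardinality hypothesis on $\kk$, while your argument is arguably more transparent and is perfectly valid under the paper's standing assumption that $\kk$ is algebraically closed.
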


\begin{proof} 
As in Proposition \ref{propsigmanormsubgroup}, we can find a finite
$\sigma$-subset $\{f_1,f_2,\ldots, f_n\} \subseteq \kk[V]_{+}^H$ with the
property that $\deg(f_i) \leq \sigma(H,V)$ for all $i$. Let
$\{g_1,g_2,\ldots,g_r\}$ be a left-transversal of $H$ in $G$. Take a new
independent variable $T$ on which $G$ acts trivially, and form the polynomial
ring $\kk[V][T]$. As the polynomial $\sum_{j=1}^n
f_j T^{j-1}$ is $H$-invariant, its relative ``norm''
\[z(T):=\prod_{i = 1}^r\left(g_{i}\left( \sum_{j=1}^n f_j T^{j-1}\right)\right) =\prod_{i = 1}^r\left( \sum_{j=1}^n g_i (f_j) T^{j-1}\right) \in \kk[V]_{+}[T].\]
is $G$-invariant, hence the coefficients of $z$ as a polynomial in $T$ are $G$-invariant.
Let $S\subseteq \kk[V]^{G}_{+}$ be this set of coefficients of $z$.  We claim that $S$ is a $\sigma$-set. 
Suppose that $v \in V$ is such that $f(v)=0$ for all $f \in S$. We must show that $v \in \mathcal{N}_{G,V}$. We have that $z(T)(v)$ is the zero polynomial, i.e.
\[\prod_{i = 1}^r\left( \sum_{j=1}^n f_j(g_i^{-1}v) T^{j-1}\right) = 0 \in \kk[T].\]
Since $\kk[T]$ is an integral domain, this implies that one of the factors of the above is zero, that is, for some $i \in\{ 1,\ldots, r\}$,
$$ \sum_{j=1}^n f_j(g_i^{-1}v) T^{j-1}=0 \in \kk[T].$$
This implies that $f_j(g_i^{-1}v) = 0$ for all $j=1,\ldots,n$. Since the set $\{f_1,f_2,\ldots, f_n\} \subseteq \kk[V]_{+}^H$ is a $\sigma$-set, we deduce that $f(g_i^{-1}v) = 0$ for all $f \in \kk[V]_{+}^H$. In particular, $f(g_i^{-1}v)=(g_{i}(f))(v) = 0$ for all $f \in \kk[V]_{+}^G$. This means that $f(v)=0$ for all $f \in \kk[V]_{+}^G$, i.e. that $v \in \mathcal{N}_{G,V}$ as required.
\end{proof}

The following is the first statement of Theorem \ref{thmsigmaleg} (a):
\begin{Corollary}\label{corngpmodp} 
Let $G$ be a finite group, and let $\kk$ be an algebraically closed field of characteristic $p$. Let $P$ be a Sylow-$p$-subgroup of $G$ (if $p$ does not divide $|G|$, take $P$ the trivial group) and suppose $N_G(P)/P$ is not cyclic. Then $\sigma(G)<|G|$.
\end{Corollary}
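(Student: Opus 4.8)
The plan is to combine the normalizer/subgroup reduction machinery developed in this section with the non-modular result of Cziszter and Domokos (Proposition~\ref{propdcnonmodularsigma}). First I would observe that $N:=N_G(P)$ contains $P$ as a \emph{normal} Sylow-$p$-subgroup, so $N$ is $p$-nilpotent: by the Schur–Zassenhaus theorem $N$ splits as $N = P \rtimes C$ for some complement $C$, and $C \cong N/P$ is a group of order prime to $p$.

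Next I would apply Proposition~\ref{propsigmasubgroup} to the subgroup $N \le G$, giving
\[
\sigma(G) \le \sigma(N)\,[G:N].
\]
Then I would bound $\sigma(N)$ using Proposition~\ref{propsigmanormsubgroup} with the normal subgroup $P \trianglelefteq N$:
\[
\sigma(N) \le \sigma(P)\,\sigma(N/P).
\]
By Theorem~\ref{thmdeltag} together with Proposition~\ref{DeltaLeSigmaLeBeta} and the fact that $\sigma(G)\le|G|$ in general, we have $\sigma(P) = |P|$ (indeed for a $p$-group $P$, $|P| = \delta(P) \le \sigma(P) \le |P|$). For the factor $\sigma(N/P)$: the group $N/P \cong C$ has order prime to $p$, so we are in the non-modular situation, and by hypothesis $N/P$ is not cyclic; hence Proposition~\ref{propdcnonmodularsigma} yields $\sigma(N/P) \le |N/P|/l'$, where $l'$ is the smallest prime dividing $|N/P|$. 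Since $l' \ge 2$, in particular $\sigma(N/P) \le |N/P|/2 < |N/P|$.

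Putting these together gives
\[
\sigma(G) \;\le\; \sigma(P)\,\sigma(N/P)\,[G:N] \;\le\; |P|\cdot\frac{|N/P|}{l'}\cdot[G:N] \;=\; \frac{|G|}{l'} \;<\; |G|,
\]
which is the desired strict inequality. I would also need to deal with the trivial edge case $N/P = \{1\}$: but then $N/P$ is cyclic, contradicting the hypothesis, so this case does not arise. (One should remark that when $N/P$ is nontrivial, $l' \le l$ where $l$ is the smallest prime dividing $|G|$, though for the statement $\sigma(G)<|G|$ the weaker bound suffices.)

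I do not expect a serious obstacle here: the only mild subtlety is making sure the non-modular hypothesis of Proposition~\ref{propdcnonmodularsigma} genuinely applies to $N/P$ — this is exactly why we pass to the quotient by the normal Sylow $p$-subgroup, since $\chr(\kk) = p$ does not divide $|N/P|$ — and checking that $N/P$ being non-cyclic (the hypothesis on $G$) is what we need, rather than something about $N$ or $G$ itself. The structural input that $P$ is normal in $N_G(P)$, hence the reduction is available, is the conceptual heart; everything else is assembling previously proved inequalities.
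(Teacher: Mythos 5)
Your proposal is correct and follows essentially the same route as the paper: chain Proposition~\ref{propsigmasubgroup} for $N_G(P)\le G$, Proposition~\ref{propsigmanormsubgroup} for $P\trianglelefteq N_G(P)$, the bound $\sigma(P)\le|P|$, and Proposition~\ref{propdcnonmodularsigma} applied to the non-modular, non-cyclic group $N_G(P)/P$ to get the strict inequality. The Schur--Zassenhaus splitting of $N_G(P)$ is not needed (only the normality of $P$ in its normalizer and the fact that $p\nmid|N_G(P)/P|$ are used), but including it does no harm.
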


\begin{proof} By Propositions \ref{propsigmasubgroup},
  \ref{propsigmanormsubgroup} and \ref{propdcnonmodularsigma},
    we have $$\sigma(G) \leq \sigma(N_G(P))[G:N_G(P)]\leq \sigma(P)\sigma(N_G(P)/P)[G:N_G(P)]$$
$$<|P|[N_G(P):P][G:N_G(P)] = |G|.$$
\end{proof}

\begin{Lemma}\label{sigmaSubgroupsEasy}
Assume $G$ is a reductive group with a closed subgroup $H$ of finite index, and $V$ a $G$-module. Then
$\sigma(H,V)\le \sigma(G,V)$.
\end{Lemma}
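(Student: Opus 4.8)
The plan is to work directly with a finite set of invariants of $\kk[V]^G$ that cuts out $\NGV$ and to restrict attention to those that lie in $\kk[V]^H$. Concretely, set $d := \sigma(G,V)$ and choose a finite set $\{f_1,\ldots,f_n\}\subseteq \kk[V]^G_{+,\le d}$ which is a $\sigma$-set for $(G,V)$; since $H\le G$ we automatically have $f_i\in\kk[V]^H_{+,\le d}$. So it suffices to prove that $\{f_1,\ldots,f_n\}$ is also a $\sigma$-set for $(H,V)$, i.e. that for every $u\in V\setminus\mathcal{N}_{H,V}$ there is some $i$ with $f_i(u)\ne 0$. Equivalently, if $f_i(u)=0$ for all $i$, then $u\in\mathcal{N}_{H,V}$.

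The key point is the inclusion $\mathcal{N}_{G,V}\subseteq\mathcal{N}_{H,V}$, which holds for arbitrary groups since a $G$-invariant vanishing on $\mathcal{N}_{G,V}^c$ is in particular an $H$-invariant (this is exactly the observation used in the proof of Proposition~\ref{propsigmanormsubgroup}). Thus if $f_i(u)=0$ for all $i$, the $\sigma$-set property for $(G,V)$ gives $u\in\mathcal{N}_{G,V}$, and hence $u\in\mathcal{N}_{H,V}$, as needed. This shows $\sigma(H,V)\le d = \sigma(G,V)$ and nowhere used reductivity or closedness of $H$ or finiteness of the index.

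The main thing to notice is therefore that the hypotheses ``$G$ reductive'', ``$H$ closed of finite index'' in the statement are in fact not needed for this inequality; the inequality $\sigma(H,V)\le\sigma(G,V)$ holds for any subgroup $H$ of any group $G$. (The finite-index and reductivity hypotheses are presumably stated because this lemma is used in tandem with Proposition~\ref{propsigmasubgroup}, where one also wants the companion bound $\sigma(G,V)\le\sigma(H,V)[G:H]$, which does require finite index.) So there is no real obstacle here: the only subtlety is the direction of the containment of nullcones, and once that is pinned down the argument is a one-line reduction to the definition of a $\sigma$-set. If one prefers to use the hypotheses, one can alternatively invoke reductivity to replace the abstract nullcone condition by the orbit-closure description of $\mathcal{N}_{G,V}$ and $\mathcal{N}_{H,V}$ and argue via $\overline{H\cdot u}\subseteq\overline{G\cdot u}$, but the argument above is cleaner and more general.
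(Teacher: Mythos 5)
Your reduction breaks at the key step: the containment of nullcones you invoke is backwards. Since $\kk[V]^{G}_{+}\subseteq\kk[V]^{H}_{+}$, the inclusion that comes for free from the observation you cite from the proof of Proposition~\ref{propsigmanormsubgroup} is $\mathcal{N}_{H,V}\subseteq\mathcal{N}_{G,V}$ (a point annihilated by all $H$-invariants is in particular annihilated by all $G$-invariants). What your argument needs, in order to pass from ``$f_i(u)=0$ for all $i$ implies $u\in\mathcal{N}_{G,V}$'' to ``$u\in\mathcal{N}_{H,V}$'', is the \emph{opposite} inclusion $\mathcal{N}_{G,V}\subseteq\mathcal{N}_{H,V}$, and that is precisely the nontrivial content of the lemma, not a formality valid for arbitrary groups. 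Indeed your concluding claim that the inequality holds for any subgroup of any group is refuted by the example the paper gives immediately after the lemma: for $G=\Gm$ acting on $V=\kk$ by multiplication and $H$ the (closed, but infinite-index) subgroup generated by a primitive $n$th root of unity, one has $\kk[V]^{\Gm}=\kk$, hence $\mathcal{N}_{\Gm,V}=V$ and $\sigma(\Gm,V)=0$, while $\kk[V]^{H}=\kk[x^{n}]$, $\mathcal{N}_{H,V}=\{0\}$ and $\sigma(H,V)=n$. Note that $\Gm$ is reductive here, so it is the finite-index hypothesis that fails: both hypotheses in the statement are doing real work.

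The hypotheses enter exactly where your proposal is silent. The paper's proof takes $v\in V\setminus\mathcal{N}_{H,V}$, observes $0\notin\overline{Hv}$, uses a left transversal $g_{1},\ldots,g_{r}$ of $H$ in $G$ (finite index) to write $\overline{Gv}=\bigcup_{i=1}^{r}g_{i}\cdot\overline{Hv}$ and conclude $0\notin\overline{Gv}$, and then uses reductivity of $G$ to convert $0\notin\overline{Gv}$ into the existence of $f\in\kk[V]^{G}_{+,\le\sigma(G,V)}$ with $f(v)\ne 0$; since $f$ is in particular $H$-invariant, this gives $\sigma(H,V)\le\sigma(G,V)$ (equivalently, it establishes $\mathcal{N}_{G,V}\subseteq\mathcal{N}_{H,V}$ under the stated hypotheses). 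Your alternative sketch via $\overline{H\cdot u}\subseteq\overline{G\cdot u}$ also points the wrong way: that containment by itself does not exclude $0\in\overline{Gu}\setminus\overline{Hu}$; the decomposition of $\overline{Gv}$ into finitely many translates of $\overline{Hv}$ is the essential extra ingredient.
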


\begin{proof}
Let $v\in V\setminus\mathcal{N}_{H,V}$. Clearly this implies
$0\not\in\overline{Hv}$. Let $g_{1},\ldots,g_{r}$ be a left transversal
of $H$ in $G$. Then we have
\[
\overline{G v}=\overline{\left(\bigcup_{i=1}^{r}g_{i}H\right)\cdot
  v}=\overline{\bigcup_{i=1}^{r}g_{i}\cdot (Hv)}=\bigcup_{i=1}^{r} \overline{g_{i}\cdot (Hv)} =\bigcup_{i=1}^{r}g_{i}
\cdot \overline{Hv}.
\]
For the last
equation, note that each $g_{i}$ induces a homeomorphism of topological spaces
$V\rightarrow V$ with inverse map $g_{i}^{-1}$. Also note that in an arbitrary topological space, one has the general rule $\overline{A\cup
  B}=\overline{A}\cup\overline{B}$ for subsets $A$ and $B$, which justifies
the previous equation.  Now assume for a contradiction
$0\in \overline{Gv}$. Then $0\in g_{i}
\cdot \overline{Hv}$ for some $i$, hence $0=g_{i}^{-1}0\in \overline{Hv}$, a
contradiction. Therefore, $0\not\in \overline{Gv}$, and as $G$ is reductive
there is an $f\in\kk[V]^{G}_{+,\le\sigma(G,V)}$ with $f(v)\ne 0$. As $f$ is
clearly $H$-invariant, this shows that $\sigma(H,V)\le\sigma(G,V)$.
\end{proof}

Note that this lemma does not hold for arbitrary subgroups: Take the action of
the multiplicative group $\Gm=(\kk\setminus\{0\},\cdot)$ on $V=\kk$ by left
multiplication and $H$ the subgroup generated by a primitive $n$th
root of unity. Then $\kk[V]^{\Gm}=\kk$ and $\kk[V]^{H}=\kk[x^{n}]$, hence
$\sigma(\Gm,V)=0$ while $\sigma(H,V)=n$.

\begin{Prop}\label{BoundForSubgroups}
Let $G$ be a linear algebraic group, $H\subseteq G$ a closed subgroup of finite index
and $V$ an $H$-module. Then
\[
\sigma(H,V)\le\sigma(G,\Ind_{H}^{G}(V))\le\sigma(G),
\]
so in particular we have $\sigma(H)\le \sigma(G)$.
\end{Prop}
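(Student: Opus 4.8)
The plan is to prove the two inequalities of Proposition~\ref{BoundForSubgroups} separately. The second one, $\sigma(G,\Ind_H^G(V))\le\sigma(G)$, is immediate once we note that $\Ind_H^G(V)$ is a rational $G$-module (it is finite dimensional because $[G:H]<\infty$), so its $\sigma$-value is dominated by the supremum defining $\sigma(G)$. All the work lies in proving $\sigma(H,V)\le\sigma(G,\Ind_H^G(V))$.

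Set $W:=\Ind_H^G(V)$, realised concretely as the space of maps $\phi\colon G\to V$ with $\phi(hg)=h\phi(g)$ for all $h\in H,g\in G$, with $G$ acting by $(g\cdot\phi)(x)=\phi(xg)$. For $g\in G$ write $\operatorname{ev}_g\colon W\to V$, $\phi\mapsto\phi(g)$, which is $\kk$-linear, and introduce the $\kk$-linear map $\iota\colon V\to W$ sending $v$ to the map supported on the coset $H$ with $\iota(v)(h)=hv$ for $h\in H$. One checks directly (splitting into the cosets $H$ and $G\setminus H$) that $\iota(v)$ really lies in $W$, that $\iota(0)=0$, that $\operatorname{ev}_e\circ\iota=\id_V$, and that $\iota$ is $H$-equivariant for the $H$-action on $W$; in particular $V$ is a direct summand of $W$ as an $H$-module. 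Since an invariant $F\in\kk[W]^G$ is in particular $H$-invariant and $\iota$ is $H$-equivariant, precomposition with $\iota$ carries $\kk[W]^G_{+,\le d}$ into $\kk[V]^H_{+,\le d}$ for every $d$. Hence it suffices to show: for $v\in V\setminus\mathcal{N}_{H,V}$ one has $\iota(v)\notin\mathcal{N}_{G,W}$. Indeed, granting this, pick $F\in\kk[W]^G_{+,\le\sigma(G,W)}$ with $F(\iota(v))\ne0$; then $F\circ\iota\in\kk[V]^H_{+,\le\sigma(G,W)}$ and $(F\circ\iota)(v)=F(\iota(v))\ne0$, so $\kk[V]^H_{\le\sigma(G,W)}$ is a $\sigma$-set for $V$ and $\sigma(H,V)\le\sigma(G,W)$.

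To produce such an $F$, take $f\in\kk[V]^H_+$ with $f(v)\ne0$ (so $f(0)=0$) and fix right-coset representatives $g_1=e,g_2,\dots,g_r$ of $H\backslash G$. Because $f$ is $H$-invariant, $f\circ\operatorname{ev}_g$ depends only on the coset $Hg$, and since right translation satisfies $g\cdot(f\circ\operatorname{ev}_{g_i})=f\circ\operatorname{ev}_{g_ig^{-1}}$ for $g\in G$ and merely permutes the cosets, the family $\{f\circ\operatorname{ev}_{g_i}\}_{i=1}^r$ is permuted by $G$. The naive relative norm $\prod_i f\circ\operatorname{ev}_{g_i}$ is $G$-invariant but vanishes at $\iota(v)$, because $r-1$ of its factors evaluate to $f(0)=0$. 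The fix is to use the shifted norm
\[
F:=\prod_{i=1}^r\bigl(1+f\circ\operatorname{ev}_{g_i}\bigr)-1\in\kk[W].
\]
Then $F$ is $G$-invariant (the permutation just observed), $F$ has zero constant term (each $f\circ\operatorname{ev}_{g_i}$ vanishes at $0$, so the product has constant term $1$, cancelled by $-1$), and, since $\iota(v)$ is supported on $H=Hg_1$,
\[
F(\iota(v))=\bigl(1+f(v)\bigr)\prod_{i=2}^r\bigl(1+f(0)\bigr)-1=f(v)\ne0.
\]
Thus $\iota(v)\notin\mathcal{N}_{G,W}$, as needed.

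The main obstacle is exactly this last point. The tempting shortcut is to run $\sigma(H,V)\le\sigma(H,W)\le\sigma(G,W)$, where the middle term regards $W$ as an $H$-module, using that $V$ is an $H$-summand of $W$ (exhibited by $\iota$ and $\operatorname{ev}_e$) together with Lemma~\ref{sigmDirectSummand}; but the second inequality fails for non-reductive $G$ in general, because $\mathcal{N}_{H,W}$ can be strictly smaller than $\mathcal{N}_{G,W}$ while $\sigma(G,W)$ only controls points outside $\mathcal{N}_{G,W}$. Restricting attention to the single, sparsely supported point $\iota(v)$ is what saves the argument, since its one-coset support annihilates all but one factor of the shifted norm; everything else (rationality of the induced module, $H$-equivariance of $\iota$, the degree bookkeeping) is routine.
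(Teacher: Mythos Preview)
Your proof is correct. The overall architecture matches the paper's: embed $V$ into $W=\Ind_H^G(V)$ via the $H$-equivariant map $\iota$, observe that restriction along $\iota$ gives a degree-nonincreasing map $\Phi\colon\kk[W]^G\to\kk[V]^H$, and reduce everything to showing that $\iota(v)\notin\mathcal{N}_{G,W}$ whenever $v\notin\mathcal{N}_{H,V}$.

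The genuine difference is in how this last implication is established. The paper invokes Schmid's result that $\Phi$ is surjective as a black box: if $\iota(v)\in\mathcal{N}_{G,W}$ then every element of $\kk[W]^G_+$ vanishes at $\iota(v)$, hence by surjectivity every element of $\kk[V]^H_+$ vanishes at $v$. You instead construct an explicit preimage. Your shifted norm
\[
F=\prod_{i=1}^r\bigl(1+f\circ\operatorname{ev}_{g_i}\bigr)-1
\]
is $G$-invariant, lies in $\kk[W]_+$, and in fact satisfies $(F\circ\iota)(u)=f(u)$ for \emph{every} $u\in V$ (not just the chosen $v$), since all but the $g_1=e$ factor collapse to $1$ on $\iota(V)$. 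So you have effectively reproved the surjectivity of $\Phi$ on $\kk[V]^H_+$ by hand. This makes your argument fully self-contained, at the modest cost of the extra construction; the paper's version is shorter but depends on an external reference. Your closing discussion of why the shortcut $\sigma(H,V)\le\sigma(H,W)\le\sigma(G,W)$ fails without reductivity is accurate and a useful remark, though not needed for the proof itself.
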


\begin{proof}
There is a natural $H$-equivariant embedding $V\hookrightarrow \Ind_{H}^{G}(V)$,
which turns $V$ into an $H$-submodule of $\Ind_{H}^{G}(V)$.
The restriction map $\Phi: \kk[\Ind_{H}^{G}(V)]^{G}\rightarrow \kk[V]^{H}$,
$f\mapsto f|_{V}$ is well defined and decreases degrees. Let $S\subseteq
\kk[\Ind_{H}^{G}(V)]_{+}^{G}$ be a $\sigma$-set for $G$. We will show that
$\Phi(S)\subseteq \kk[V]_{+}^{H}$ is a $\sigma$-set for $H$, which proves the proposition.
Take $v\in V$ such that $\Phi(f)(v)=f(v)=0$, for all $f\in S$. Since $S$ is a
$\sigma$-set for $G$, this means
$f(v)=\Phi(f)(v)=0$ for all $f\in \kk[\Ind_{H}^{G}(V)]_{+}^{G}$. By the
proof of Schmid \cite[Proposition 5.1]{Schmid}, the map $\Phi$ is surjective, so we
have $f(v)=0$ for all $f\in \kk[V]_{+}^{H}$. Thus, $\Phi(S)\subseteq
\kk[V]_{+}^{H}$ is a $\sigma$-set for $H$.
\end{proof}

An immediate consequence of Propositions \ref{propsigmasubgroup} and \ref{BoundForSubgroups} is

\begin{Corollary}\label{sigmaGG0}
Let $G$ be a linear algebraic group, with $G^0$ the connected component of $G$ containing the identity. We have the inequalities
\[
\sigma(G)\le[G:G^{0}]\sigma(G^{0})\quad \textrm{and}\quad \sigma(G^{0})\le\sigma(G).
\]
In particular, $\sigma(G)$ and $\sigma(G^{0})$ are either both
finite or infinite.
\end{Corollary}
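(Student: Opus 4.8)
The plan is to derive Corollary~\ref{sigmaGG0} directly from the two preceding boxed results, which is why the text calls it an ``immediate consequence''. Since $G$ is a linear algebraic group, its identity component $G^0$ is a closed normal subgroup of finite index $[G:G^0]$ (this is a standard fact about linear algebraic groups). So both inequalities are instances of the general theorems already proved, applied to the pair $G^0 \trianglelefteq G$.

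First I would establish the inequality $\sigma(G) \le [G:G^0]\sigma(G^0)$. Here I apply Proposition~\ref{propsigmasubgroup} with $H = G^0$: since $G^0$ is a subgroup of $G$ of finite index, that proposition gives $\sigma(G) \le \sigma(G^0)[G:G^0]$, which is exactly what is wanted (up to the commutativity of the product). Alternatively, since $G^0$ is in fact \emph{normal}, one could invoke Proposition~\ref{propsigmanormsubgroup} with $N = G^0$, giving $\sigma(G) \le \sigma(G^0)\sigma(G/G^0)$; then one uses that $G/G^0$ is a finite group, so $\sigma(G/G^0) \le |G/G^0| = [G:G^0]$ by the standard bound $\sigma(\Gamma)\le|\Gamma|$ for finite groups $\Gamma$ recalled in the introduction. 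Either route closes the first inequality. Second, for $\sigma(G^0) \le \sigma(G)$, I apply Proposition~\ref{BoundForSubgroups} with $H = G^0$, a closed subgroup of finite index in $G$; that proposition delivers $\sigma(G^0) \le \sigma(G)$ at once.

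For the final sentence, the two inequalities together immediately give the equivalence: if $\sigma(G^0)$ is finite, then $\sigma(G) \le [G:G^0]\sigma(G^0) < \infty$ since $[G:G^0]$ is a finite integer; conversely if $\sigma(G)$ is finite, then $\sigma(G^0) \le \sigma(G) < \infty$. Hence $\sigma(G)$ and $\sigma(G^0)$ are finite or infinite together.

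There is essentially no obstacle here: the only non-trivial input is the purely group-theoretic fact that $G^0$ is a closed subgroup of $G$ of finite index (and normal), which is part of the basic structure theory of linear algebraic groups and requires no argument in this context. The rest is a mechanical combination of Propositions~\ref{propsigmasubgroup} (or~\ref{propsigmanormsubgroup}) and~\ref{BoundForSubgroups}. I would keep the written proof to two or three lines, simply citing those propositions and the finiteness of $[G:G^0]$.
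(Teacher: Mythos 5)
Your proposal is correct and matches the paper's argument: the paper also obtains both inequalities by applying Proposition~\ref{propsigmasubgroup} and Proposition~\ref{BoundForSubgroups} to the closed finite-index subgroup $H=G^{0}$, with the finiteness statement following at once. The alternative route you mention via Proposition~\ref{propsigmanormsubgroup} is also valid but unnecessary.
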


\begin{rem}\label{betaSepFactorGroup}
If $G$ is a linear algebraic group and $N$ a closed normal subgroup of $G$,
then we have
\[
\sigma(G/N)\le\sigma(G).
\]
This follows from the fact that every $G/N$ module can be turned into a
$G$-module via the canonical map $G\rightarrow G/N$.
\end{rem}

Propositions \ref{propsigmanormsubgroup} and \ref{propsigmasubgroup} are
proved in \cite{DomokosCziszter} under the assumption that $[G:N]$ is not
divisible by $p$. Our proofs are rather similar to
\cite[Theorem~2]{KohlsKraft}. The proof of Proposition \ref{BoundForSubgroups}
is similar to \cite[Corollary~1]{KohlsKraft}.

\section{The $\sigma$-number for finite groups}\label{SecSigmaFinite}

We now specialize to the case of finite groups.  Throughout this section we work over an  algebraically closed field $\kk$ of characteristic $p$, which is assumed to divide $|G|$. Our first result is a generalisation of \cite[Corollary~5.3]{DomokosCziszter} to the modular case.

\begin{Theorem} Let $G$ be a group of the form $P \times A$, where $P$ is a $p$-group and $A$ is an abelian group of order not divisible by $p$. Then $\sigma(G) = |P|\exp(A)$.
\end{Theorem}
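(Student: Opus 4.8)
The plan is to prove the two inequalities separately, using the structure $G = P \times A$ together with results already established. For the lower bound $\sigma(G) \geq |P|\exp(A)$, I would exhibit a single $G$-module $V$ on which the bound is attained. A natural candidate is $V = V_{\reg,P} \otimes \kk_\chi$, or more simply the module whose $\sigma$-value we can compute directly: take $W_P = V_{\reg,P}$ as a $P$-module (inflated to $G$ by letting $A$ act trivially), and let $\kk_\chi$ be a one-dimensional $A$-module where $\chi$ is a faithful character of the cyclic-or-product abelian group generated so that $\exp(A)$ is realized (if $A$ is not cyclic one uses an appropriate single cyclic direct factor of maximal order, combined via Proposition~\ref{SigmaDirectSumFiniteGroups} with the corresponding module for each factor). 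The key computation is that on $W_P \otimes \kk_\chi$, the lowest-degree invariant not vanishing at a suitable point has degree exactly $|P|\exp(A)$: a monomial invariant must be $P$-invariant up to scalar in the regular-representation variables, forcing degree $\geq |P|$ in those variables by the argument of Proposition~\ref{propdeltavreg}, and simultaneously must have $A$-weight zero, forcing the total degree in the $\kk_\chi$-variable (or the whole monomial) to be divisible by $\exp(A)$. Combining gives degree $\geq |P|\exp(A)$, and an explicit orbit sum achieves it.

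For the upper bound $\sigma(G) \leq |P|\exp(A)$, I would use Proposition~\ref{propsigmanormsubgroup} with $N = P$: since $P$ is normal in $G$ with $G/P \cong A$, we get
\[
\sigma(G) \leq \sigma(P)\,\sigma(G/P) = \sigma(P)\,\sigma(A).
\]
Now $\sigma(P) = |P|$ by Theorem~\ref{thmdeltag} and Proposition~\ref{DeltaLeSigmaLeBeta} (for a $p$-group $|P| = \delta(P) \leq \sigma(P) \leq |P|$, using $\sigma(P)\leq|P|$ from Dade's algorithm), and $\sigma(A) = \exp(A)$ by Proposition~\ref{propdcnonmodularsigma} applied in characteristic $p$ not dividing $|A|$ — here one must check that the Cziszter–Domokos result $\sigma(\text{abelian}) = \exp$ is available over our field, which it is since $p \nmid |A|$ means $A$ is linearly reductive and the non-modular theory applies. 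This yields $\sigma(G) \leq |P|\exp(A)$ immediately.

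The main obstacle is the lower bound, specifically making the weight-counting argument fully rigorous when $A$ is not cyclic: one needs $\sigma(A) = \exp(A)$ to be \emph{witnessed} by a concrete module and point, and then to tensor (or direct-sum-then-combine) with the regular representation of $P$ in a way that the two degree constraints genuinely multiply rather than merely add. I expect the cleanest route is to take $V = \bigoplus_i (V_{\reg,P} \otimes \kk_{\chi_i})$ where $\kk_{\chi_i}$ runs over one-dimensional modules for the cyclic factors of $A$, apply Proposition~\ref{SigmaDirectSumFiniteGroups} to reduce to a single summand $V_{\reg,P}\otimes\kk_\chi$ with $\chi$ of order $\exp(A)$, and on that summand analyze orbit sums of monomials explicitly: $G$ permutes the $|P|\cdot\text{ord}(\chi)$ coordinate variables, an invariant monomial's support must be a union of $\langle P\rangle$-orbits (size $|P|$) and its $\chi$-degree must vanish mod $\text{ord}(\chi)$, and a short combinatorial check shows the minimal such degree is $|P|\cdot\text{ord}(\chi) = |P|\exp(A)$, with the full orbit sum evaluating nonzero at the all-ones vector. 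Verifying that nothing of smaller degree cuts out the nullcone — i.e.\ that $\sigma(G,V)$ is not accidentally smaller — follows the pattern of Proposition~\ref{propdeltavreg}.
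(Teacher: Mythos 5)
Your proposal is correct and is essentially the paper's own argument: the lower bound uses the same module $V_{\reg,P}\otimes \kk_\chi$ with $\chi$ a character of $A$ of order $\exp(A)$ (such a character exists directly since $p\nmid |A|$, so no direct-sum workaround over cyclic factors is needed), the same all-ones test point, and the same orbit-sum/stabilizer analysis from Proposition~\ref{propdeltavreg}, where your ``constraints multiply'' step is made rigorous exactly as you sketch: the monomial must be $(\prod_{g\in P}x_g)^d$ of degree $d|P|$, divisibility of this degree by $e=\exp(A)$ together with $\gcd(|P|,e)=1$ forces $e\mid d$, hence degree at least $|P|e$. The only cosmetic difference is in the upper bound, where you apply Proposition~\ref{propsigmanormsubgroup} with $N=P$ (using $\sigma(P)=|P|$), while the paper applies Proposition~\ref{propsigmasubgroup} to the subgroup $A$ of index $|P|$; both reduce to $\sigma(A)=\exp(A)$ from Cziszter--Domokos and are equally valid.
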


\begin{proof} 
We have $\sigma(G) \leq [G:A]\sigma(A)=|P|\sigma(A)$ by Proposition
  \ref{propsigmasubgroup}. By
  \cite[Corollary~5.3]{DomokosCziszter},  $\sigma(A) = \exp(A)$ when $|A|$ is not divisible by $p$, so we have proved   $\sigma(G) \leq |P|\exp(A)$. It remains to show that   $\sigma(G) \geq |P|\exp(A)$.
To this end, let $W$ be a 1-dimensional $\kk A$-module with character of order
$e:= \exp(A)$, and consider the  $P\times A$-module $V:= \kk P \otimes_{\kk}
W$, where $P$ acts on only the first factor and $A$ on only the second. We write
$\{v_g\mid g \in P\}$ for a basis of $V$ on which $P$ acts via the regular representation, with $\{x_g\mid g \in
P\}$ the dual basis. Notice that a homogeneous $f \in \kk[V]$ is $A$-invariant if and only
if $\deg(f)$ is divisible by $e$. Now consider the point $v:=\sum_{g \in P}v_g
\in V$. Take a homogeneous $f\in\kk[V]_{+}^{G}$ such that
$f(v)\ne 0$. As all monomials are eigenvectors under the $A$-action, every
monomial appearing in $f$ is $A$-invariant. As $\kk[V]^{P}$ is linearly
spanned by orbit sums of monomials $o_{P}(m)$, $f(v)\ne 0$ implies there
exists an $A$--invariant monomial $m$ of the same degree as $f$ such that $o_{P}(m)(v)\ne 0$.   
The proof of Proposition \ref{propdeltavreg} implies that $m=(\prod_{g \in
  P}x_g)^{d}$ for some $d\in\mathbb{N}$. In order for $m$ to be $A$-invariant, it
follows $e|\deg(m)=d|P|$. Since $|P|$ and $e$ are coprime, this means that
$e|d$, i.e.  $\deg(f) \geq e|P|$, and so   $\sigma(G,V) \geq e|P|=\exp(A)|P|$ as required.   
\end{proof}

From this result we obtain immediately:

\begin{Corollary}\label{SigmaCyclicGroups}
For any cyclic group $G$, we have $\sigma(G)=|G|$.
\end{Corollary}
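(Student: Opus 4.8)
The plan is to deduce the corollary directly from the preceding theorem by writing a cyclic group in the required product form. Let $G$ be cyclic of order $n$, and write $n = p^a m$ with $p \nmid m$ (allowing $a = 0$ when $p \nmid n$, or when $p = 0$). Since $G$ is cyclic, it decomposes canonically as $G \cong P \times A$, where $P$ is the (cyclic) Sylow-$p$-subgroup of order $p^a$ and $A$ is the cyclic complement of order $m$, which is coprime to $p$. This is exactly the hypothesis of the previous theorem, with $A$ abelian of order not divisible by $p$.

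Applying the theorem gives $\sigma(G) = |P| \exp(A)$. Since $A$ is cyclic of order $m$, we have $\exp(A) = m$, so $\sigma(G) = p^a m = n = |G|$. In the non-modular case ($a = 0$) this reads $\sigma(G) = \exp(A) = |A| = |G|$, consistent with Proposition \ref{propdcnonmodularsigma}; in any case the single computation covers both situations.

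I expect there to be no real obstacle here: the only thing to verify is that the structural decomposition $G \cong P \times A$ for cyclic $G$ is legitimate, which is immediate from the Chinese Remainder Theorem applied to $\mathbb{Z}/n\mathbb{Z}$ (or from the fact that a cyclic group is the internal direct product of its Sylow subgroups). The one point worth stating explicitly is that $A$, being cyclic of order coprime to $p$, satisfies $\exp(A) = |A|$, which is what converts $|P|\exp(A)$ into $|G|$. Thus the proof is a one-line invocation of the theorem together with this elementary structural remark.
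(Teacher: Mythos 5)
Your proof is correct and is exactly the argument the paper intends: the corollary is stated as an immediate consequence of the preceding theorem, using the canonical decomposition of a cyclic group as $P\times A$ with $A$ cyclic of order coprime to $p$, so that $\sigma(G)=|P|\exp(A)=|P|\,|A|=|G|$. Your extra remark covering the non-modular case via Proposition \ref{propdcnonmodularsigma} is a harmless (and sensible) addition, since the section's standing hypothesis is that $p$ divides $|G|$.
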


The following is part (b) of Theorem \ref{thmsigmaleg}.

\begin{Theorem} \label{ThmCyclicFactorOfSylow}
Let $G$ be a finite group. Assume $P$
  is a Sylow-$p$-subgroup of $G$ such that $N_G(P)/P$ is cyclic. Then $\sigma(G) \geq|N_G(P)|$.
\end{Theorem}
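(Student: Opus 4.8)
The plan is to reduce to a situation where Corollary~\ref{SigmaCyclicGroups} applies, via the relative inequalities for the $\sigma$-number established in Section~\ref{SecSigmaGeneral}. Set $N := N_G(P)$. Since $N$ is a subgroup of $G$ of finite index, Proposition~\ref{BoundForSubgroups} gives $\sigma(N) \le \sigma(G)$, so it suffices to prove $\sigma(N) \ge |N|$. Thus I would first reduce to the case $G = N_G(P)$, i.e. to a finite group $G$ with a normal Sylow-$p$-subgroup $P$ such that $G/P$ is cyclic.

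Now suppose $P \trianglelefteq G$ with $G/P$ cyclic. By Remark~\ref{betaSepFactorGroup} we have $\sigma(G/P) \le \sigma(G)$, and since $G/P$ is cyclic of order $[G:P]$, Corollary~\ref{SigmaCyclicGroups} gives $\sigma(G/P) = [G:P]$. On the other hand $P \le G$ has finite index, so Proposition~\ref{BoundForSubgroups} gives $\sigma(P) \le \sigma(G)$; by Theorem~\ref{thmdeltag} (applied with $G = P$ a $p$-group, so the Sylow-$p$-subgroup is $P$ itself) together with Proposition~\ref{DeltaLeSigmaLeBeta}, $\sigma(P) \ge \delta(P) = |P|$. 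So both $|P|$ and $[G:P]$ are lower bounds for $\sigma(G)$, but a priori only $\max\{|P|,[G:P]\}$, not the product $|P|\cdot[G:P] = |G| = |N|$. The main obstacle, then, is exactly to combine these two "independent" lower bounds into a single bound of size $|N|$: one needs a single module $V$ on which $\sigma(G,V)$ is forced to be at least $|P|[G:P]$ simultaneously.

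The natural candidate is the module built from a tensor product of the regular representation of $P$ with a faithful character of $G/P$, mimicking the construction used just above in the proof of $\sigma(P \times A) = |P|\exp(A)$. Concretely, let $e := [G:P] = \exp(G/P)$; choose a $1$-dimensional $\kk(G/P)$-module $W$ on which $G/P$ (hence $G$) acts by a character of order $e$ (this exists since $p \nmid e$), and let $V := \kk P \otimes_\kk W$, with $G$ acting: $P$ acts on $\kk P$ by the regular representation, and the action of $G$ on the tensor product is induced using a chosen set-section of $G \to G/P$ — here one must check that, because $P \trianglelefteq G$, the $P$-action on $\kk P$ extends to a genuine $G$-action on $V$ (the conjugation action of $G$ on $P$ permutes the basis, twisted by $W$). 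Then I would take the point $v := \sum_{g \in P} v_g \in V$, which is $P$-fixed; a homogeneous invariant $f \in \kk[V]^G_+$ with $f(v) \ne 0$ is in particular $P$-invariant, so as in Proposition~\ref{propdeltavreg} it is a sum of $P$-orbit sums $o_P(m)$, and $f(v) \ne 0$ forces some monomial $m$ with $p \nmid |P \cdot m|$, whence $m = (\prod_{g\in P} x_g)^d$ for some $d \ge 1$, giving $\deg f \ge d|P| \ge |P|$. Simultaneously, $f$ being $A$-... — rather, $f$ being invariant under the $G/P$-action means every monomial in $f$ has degree divisible by $e$ (monomials are eigenvectors for the character action of $G/P$), so $e \mid \deg m = d|P|$; since $\gcd(e,|P|)=1$ this forces $e \mid d$, hence $\deg f \ge e|P| = [G:P]\cdot|P| = |G| = |N|$. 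This shows $\sigma(N, V) \ge |N|$, completing the proof. The one genuinely delicate point to get right is the verification that the $G$-module structure on $V = \kk P \otimes W$ is well-defined and that the character-eigenvalue bookkeeping for the $G/P$-action survives the non-split extension — everything else is a direct transcription of arguments already appearing in the excerpt.
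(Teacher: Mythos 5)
Your overall strategy is the paper's: reduce to $G=N_G(P)$ via Proposition \ref{BoundForSubgroups}, build a twisted version of the regular representation of $P$, evaluate at $v=\sum_{g\in P}v_g$, and force any homogeneous invariant not vanishing at $v$ to have degree divisible by both $|P|$ and $e=[G:P]$. The $P$-side of your argument (orbit sums, $p\nmid|P\cdot m|$ forcing $m=(\prod_{g\in P}x_g)^d$) is fine, and your worry about well-definedness of the module is not the real problem: $P$ is a normal Hall subgroup of $N_G(P)$, so Schur--Zassenhaus gives a genuine complement $H=\langle t\rangle$, and one defines $t^i\cdot v_g=\zeta^{-i}v_{t^igt^{-i}}$ exactly as the paper does; the extension always splits here. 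The genuine gap is the step ``monomials are eigenvectors for the character action of $G/P$, hence $e\mid\deg m$''. That is true only in the direct-product case $P\times A$, where the complement acts by scalars alone. Here the complement is forced to act on the variables by the conjugation permutation twisted by the character, $t\cdot x_g=\zeta x_{tgt^{-1}}$ (a purely scalar action of $t$ is incompatible with the left regular $P$-action once conjugation is nontrivial, since $ta=(tat^{-1})t$ in $G$). Hence monomials are not eigenvectors, and homogeneous $G$-invariants need not have degree divisible by $e$: for $G=S_3$ in characteristic $3$, with $P=\langle h\rangle$, $e=2$, $\zeta=-1$ and $t$ a transposition, the element $o_P(x_1^2x_h)-o_P(x_1^2x_{h^2})$ is a homogeneous $G$-invariant of degree $3$ on this very module. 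So the divisibility $e\mid\deg f$ cannot be read off monomial-by-monomial; it must use the hypothesis $f(v)\neq 0$, and that is precisely where your transcription of the $P\times A$ proof breaks down.

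The needed statement is true, and there are two ways to get it. The paper's route: since $r=e$ is invertible in $\kk$, $\kk[V]^G$ is spanned by the relative transfers $s_m=\sum_{i=0}^{r-1}t^i\cdot o_P(m)$, and $s_m(v)=\sum_{i=0}^{r-1}\zeta^{\deg(m)i}\,|P\cdot m|$, which is nonzero only if $m=(\prod_{g\in P}x_g)^d$ and $r\mid d$, giving $\deg f\geq r|P|=|G|$. Alternatively, a one-line repair of your version: the line $\kk v$ is $t$-stable with $t^{-1}\cdot v=\zeta v$, so $G$-invariance gives $f(v)=f(t^{-1}\cdot v)=\zeta^{\deg f}f(v)$, and $f(v)\neq 0$ forces $e\mid\deg f$; your coprimality argument then finishes the proof. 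As written, however, the eigenvector justification is false, so the key divisibility step is unproved in your proposal.
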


\begin{proof} Firstly, $\sigma(G) \geq \sigma(N_G(P))$ by Proposition
  \ref{BoundForSubgroups}, so we may assume $G=N_G(P)$. It is enough to find a
  $G$-module $V$ with $\sigma(G,V) \geq |G|$. Set $r:= |G/P|$ and let $\zeta\in\kk$ be a
  primitive $r$th root of unity. By the Schur-Zassenhaus Lemma (see \cite[Theorem
  7.41]{RotmanGroups}), $P$ has a complement $H$ in $G$. Let $t$ be a generator
  of $H$.
Define a $\kk G$-module as follows: a $\kk$-basis is given by $\{v_g\mid g \in
P\}$, with dual basis $\{x_g\mid g \in P\}$. The action of $P$ on $V$ is via
the regular representation, while the action of $H$ is given via $t^{i}\cdot
v_{g}:=\zeta^{-i}v_{t^{i}gt^{-i}}$ for any $i\in\Z$ and $g\in P$.

Let $v:= \sum_{g \in P} v_g$, and let $f \in \kk[V]_{+}^G$ be homogeneous such
that $f(v)\ne 0$. Once more, $\kk[V]^{P}$ is linearly spanned by orbit sums of
monomials $o_{P}(m)$. As $r=|H|$ is invertible in $\kk$, $\kk[V]^{G}$ is linearly
spanned by elements $s_{m}:=\sum_{i=0}^{r-1}(t^{i}\cdot o_{P}(m))$. Therefore,
there exists a monomial $m$ of the same degree as $f$ such that
\[
0\ne s_{m}(v)=\sum_{i=0}^{r-1}(t^{i}\cdot o_{P}(m))(v)=\sum_{i=0}^{r-1}\zeta^{\deg(m)i}|Pm|.
\]
If the rightmost sum is to be non-zero, we must have again $|Pm|=1$,
that is, $m$ must be $P$-invariant, i.e. of the form $(\prod_{g\in P}x_{g})^{d}$ for
some $d\ge 1$. It follows
\[
0\ne s_{m}(v)=\sum_{i=0}^{r-1}\zeta^{d|P|i}.
\]
However, the sum on the right hand side is non-zero if and only if
$r|d$. Therefore, $m$ (hence $f$) has to be of degree at least $|P|r=|G|$. This shows that $\sigma(G,V) \geq |G|$ as required.
\end{proof}

\begin{Corollary}
Let $G$ be a finite group. Assume $P$
  is a Sylow-$p$-subgroup of $G$ and $\sigma(G)=|P|$. Then $N_{G}(P)=|P|$.
\end{Corollary}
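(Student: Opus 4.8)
The statement to prove is the contrapositive-flavoured consequence: if $\sigma(G)=|P|$ then $N_G(P)=P$ (the corollary as printed says ``$N_G(P)=|P|$'' but clearly means $N_G(P)=P$, equivalently $|N_G(P)|=|P|$).

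The plan is to combine Theorem \ref{ThmCyclicFactorOfSylow} with Corollary \ref{corngpmodp} (equivalently, the first statement of Theorem \ref{thmsigmaleg}(a)), together with the trivial bound $\sigma(G)\ge\delta(G)=|P|$ coming from Proposition \ref{DeltaLeSigmaLeBeta} and Theorem \ref{thmdeltag}. Suppose $\sigma(G)=|P|$. First I would observe that $|P|\le|N_G(P)|$ always, so it suffices to show $|N_G(P)|\le|P|$. Consider the quotient $N_G(P)/P$. By Corollary \ref{corngpmodp}, if $N_G(P)/P$ were not cyclic we would have $\sigma(G)<|G|$ — but that alone is not enough, so instead I run the argument through the sharper chain of inequalities used there. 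Actually the cleanest route: from the proof of Corollary \ref{corngpmodp}, $\sigma(G)\le\sigma(P)\sigma(N_G(P)/P)[G:N_G(P)]$. Since $\sigma(P)\le|P|$ (indeed $=|P|$) and $\sigma(N_G(P)/P)\le|N_G(P)/P|$, we get $\sigma(G)\le|P|\cdot|N_G(P)/P|\cdot[G:N_G(P)]=|G|$; this is just the known bound and says nothing. The real input must be Theorem \ref{ThmCyclicFactorOfSylow}.

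So here is the argument I would actually write. Assume $\sigma(G)=|P|$ and suppose for contradiction that $P\subsetneq N_G(P)$, i.e. $N_G(P)/P$ is a nontrivial group. If $N_G(P)/P$ is cyclic, then by Theorem \ref{ThmCyclicFactorOfSylow}, $\sigma(G)\ge|N_G(P)|>|P|$, contradicting $\sigma(G)=|P|$. If $N_G(P)/P$ is not cyclic, then it has a subgroup $C/P$ with $C/P$ cyclic and nontrivial (every nontrivial finite group has a nontrivial cyclic subgroup), and $C$ is a subgroup of $G$ containing $P$ as a normal Sylow-$p$-subgroup with $C/P$ cyclic. Then $C=N_C(P)$ and by Theorem \ref{ThmCyclicFactorOfSylow} applied to $C$ we get $\sigma(C)\ge|C|>|P|$; combined with $\sigma(C)\le\sigma(G)$ from Proposition \ref{BoundForSubgroups} (as $C$ is a subgroup of finite index in the finite group $G$), this again contradicts $\sigma(G)=|P|$. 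Hence $N_G(P)=P$.

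The main obstacle is simply making sure the two cases (cyclic vs.\ non-cyclic $N_G(P)/P$) are both handled by reducing to the cyclic-quotient case of Theorem \ref{ThmCyclicFactorOfSylow}; once one notices that any nontrivial finite group contains a nontrivial cyclic subgroup and that in $C$ the subgroup $P$ is still a (now normal) Sylow-$p$-subgroup, the rest is bookkeeping with the inequalities $\sigma(C)\le\sigma(G)$ and $\sigma(C)\ge|C|$. I expect no genuine difficulty beyond this. The proof to write is therefore short:

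\begin{proof}
Since $P\le N_G(P)$ we always have $|P|\le |N_G(P)|$. Suppose $N_G(P)\ne P$. Then $N_G(P)/P$ is a nontrivial finite group, so it contains a nontrivial cyclic subgroup, say $C/P$ with $C$ a subgroup of $G$ containing $P$. As $P\trianglelefteq C$, $P$ is a normal (hence the unique) Sylow-$p$-subgroup of $C$, so $C=N_C(P)$ and $N_C(P)/P=C/P$ is cyclic and nontrivial. By Theorem \ref{ThmCyclicFactorOfSylow}, $\sigma(C)\ge|N_C(P)|=|C|>|P|$. By Proposition \ref{BoundForSubgroups}, $\sigma(C)\le\sigma(G)$, so $\sigma(G)>|P|$, contradicting $\sigma(G)=|P|$. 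Hence $N_G(P)=P$.
\end{proof}
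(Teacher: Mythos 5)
Your proof is correct and follows essentially the same route as the paper: the paper picks $g\in N_G(P)\setminus P$ and sets $H:=\langle P,g\rangle$, which is exactly your subgroup $C$ containing $P$ normally with cyclic nontrivial quotient, and then concludes via Proposition \ref{BoundForSubgroups} and Theorem \ref{ThmCyclicFactorOfSylow} that $\sigma(G)\ge\sigma(H)=|H|>|P|$, a contradiction. Your reading of the statement's typo ($N_G(P)=P$ rather than ``$N_G(P)=|P|$'') is also the intended one.
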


\begin{proof}
Assume for a contradiction $N_{G}(P)\supsetneq P$ and take a $g\in
N_{G}(P)\setminus P$. Then the subgroup $H:=\langle P,g\rangle$ of $G$ satisfies
$N_{H}(P)=H$ and $H/P$ is cyclic. Hence by Propostion \ref{BoundForSubgroups}
 and Theorem~\ref{ThmCyclicFactorOfSylow} we would have
\[
\sigma(G)\ge \sigma(H)=|H|>|P|,
\]
 a contradiction.
\end{proof}

We will write $Z_{n}$ for a cyclic group of order $n$, which if convenient we
identify with $\Z/n\Z$. Recall $\Aut(Z_{n})\cong(\Z/n\Z)^{\times}$, which
is cyclic when $n$ is prime.

\begin{Prop}\label{PropZqZd}
Assume $p,q$ are primes and $d\in\N$ such that $p|d$ and $d|q-1$. Take
an embedding $Z_{d}\hookrightarrow \Aut(Z_{q})$ and form the corresponding
semidirect product $Z_{q}\rtimes Z_{d}$. Then over a field of characteristic
$p$, we have $\sigma(Z_{q}\rtimes Z_{d})=q$.
\end{Prop}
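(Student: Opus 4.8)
The plan is to establish $\sigma(G)\ge q$ and $\sigma(G)\le q$ for $G:=Z_q\rtimes Z_d$ separately. Write $A:=Z_q$ for the normal subgroup and $C:=Z_d$ for the given complement, and note that $p\mid d\mid q-1$ forces $p\ne q$, so $A$ is linearly reductive over $\kk$. The lower bound is immediate: since $Z_q\le G$, Proposition~\ref{BoundForSubgroups} together with Corollary~\ref{SigmaCyclicGroups} gives $\sigma(G)\ge\sigma(Z_q)=q$ (this also drops out of the analysis below).

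For $\sigma(G)\le q$ the idea is to reduce an arbitrary $G$-module $V$ to very explicit pieces by decomposing along the eigenspaces of a generator $a_0$ of $A$. As $\chr\kk=p\nmid q$ and $\kk$ is algebraically closed, $a_0$ acts diagonalizably and $V|_A=\bigoplus_{j\in\Z/q}V_j$, where $V_j$ is the $\omega^j$-eigenspace for a fixed primitive $q$-th root of unity $\omega$. The complement $C$ permutes the $V_j$ through the induced action of $C\hookrightarrow\Aut(A)\cong(\Z/q)^\times$ on $\Z/q$, whose orbits are $\{0\}$ and orbits of length $d$ inside $(\Z/q)^\times$. Grouping eigenspaces along these orbits writes $V$ as a direct sum of $G$-submodules: $V^A$, on which $G$ acts through $G/A\cong Z_d$, and one submodule $V_{\mathcal O}=\bigoplus_{j\in\mathcal O}V_j$ for each length-$d$ orbit $\mathcal O$. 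By Proposition~\ref{SigmaDirectSumFiniteGroups} we may treat these separately. On $V^A$ we get $\sigma(G,V^A)=\sigma(Z_d,V^A)\le\sigma(Z_d)=d\le q-1$. For $V_{\mathcal O}$, fixing $j_1\in\mathcal O$ and a generator $c_0$ of $C$ whose image $r\in(\Z/q)^\times$ has order $d$, one checks (by transporting a basis of $V_{j_1}$ around with powers of $c_0$) that $V_{\mathcal O}$ is a direct sum of copies of the $d$-dimensional ``monomial module'' $\Ind_A^G(\kk_{j_1})$; so, again by Proposition~\ref{SigmaDirectSumFiniteGroups}, it remains to prove $\sigma(G,V)\le q$ when $V$ is this monomial module, with coordinates $x_0,\dots,x_{d-1}$ on which $a_0$ acts with weights $j_k:=j_1 r^k\bmod q$ and $c_0$ acts by the cyclic shift $x_k\mapsto x_{k+1}$.

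On this $V$, $\kk[V]^G$ is spanned by the $C$-orbit sums $o_C(m):=\sum_{m'\in C\cdot m}m'$ of $A$-invariant monomials $m=\prod_k x_k^{e_k}$ (those with $\sum_k e_k j_k\equiv0\pmod q$), so it suffices to find, for each $v=(t_k)\ne0$ with support $S:=\{k:t_k\ne0\}$, such an $m$ of degree at most $q$ with $o_C(m)(v)\ne0$. I look for $m$ with $\operatorname{supp}(m)=S$ that is the unique monomial in $C\cdot m$ with support inside $S$, so that $o_C(m)(v)=m(v)=\prod_{k\in S}t_k^{e_k}\ne0$. If $\sum_{k\in S}j_k\equiv0\pmod q$, then $m:=\prod_{k\in S}x_k$ is $A$-invariant of degree $|S|\le d<q$, and because all its exponents equal $1$ one has $\Stab_C(m)=\Stab_C(S)$, which yields the required uniqueness. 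If $\sum_{k\in S}j_k\not\equiv0$, a geometric-series computation using $r^d\equiv1$, applied to the $\Stab_C(S)$-orbits inside $S$, forces $\Stab_C(S)$ to be trivial (otherwise each orbit-sum of weights vanishes and so does $\sum_{k\in S}j_k$); in particular $|S|\le d-1\le q-2$, the $|S|$ elements $-\bigl(\sum_{l\in S}j_l\bigr)j_k^{-1}$ $(k\in S)$ are distinct and nonzero in $\Z/q$, so their smallest representative $c$ in $\{1,\dots,q-1\}$, attained at some $k_0$, satisfies $c\le q-|S|$; then $m:=x_{k_0}^{\,1+c}\prod_{k\in S\setminus\{k_0\}}x_k$ is $A$-invariant of degree $|S|+c\le q$, and triviality of $\Stab_C(S)$ gives the uniqueness. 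Either way $\sigma(G,V)\le q$, and hence $\sigma(G)=q$.

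The routine parts are the $A$-eigenspace decomposition and the two applications of Proposition~\ref{SigmaDirectSumFiniteGroups}. The main obstacle is the final step: the invariant must be supported exactly on $\operatorname{supp}(v)$, survive $C$-symmetrization without cancellation, and still have degree at most $q$; it is precisely the pigeonhole bound — the smallest of $|S|$ distinct residues in $\{1,\dots,q-1\}$ is at most $q-|S|$ — that keeps the degree within the required range.
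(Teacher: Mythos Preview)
Your proof is correct and follows the same overall route as the paper: both reduce to the $d$-dimensional monomial $G$-modules (the paper via an explicit decomposition of $V_{\reg}$ in Lemma~\ref{VregForZqZd}, you via the $A$-eigenspace decomposition of an arbitrary $V$; these are interchangeable) and then, for each nonzero point with support $S$, exhibit an $A$-invariant monomial supported exactly on $S$, of degree at most $q$, whose $C$-orbit sum does not vanish there. The case split is also the same in disguise: the paper splits on whether the $C$-stabilizer of $\prod_{k\in S}x_k$ is trivial, you split on whether $\sum_{k\in S}j_k\equiv 0$, and your geometric-series computation shows these conditions are equivalent.

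The one genuine difference is in how the exponent vector is produced. The paper quotes \cite[Lemma~6.1]{DomokosCziszter} as a black box to obtain positive $\alpha_i$ with $\sum\alpha_i\le q$ and $\sum\alpha_i j_{k_i}\equiv 0$. Your pigeonhole argument---that the smallest of $|S|$ distinct residues in $\{1,\dots,q-1\}$ is at most $q-|S|$, so one can bump a single exponent from $1$ to $1+c$---constructs such a vector explicitly and keeps the proof self-contained. This is a small but pleasant improvement over the paper's version.
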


Note that over a field of characteristic $q$, $\sigma(Z_{q}\rtimes Z_{d})=dq$
by Theorem \ref{ThmCyclicFactorOfSylow}, and in the non-modular case,
$\sigma(Z_{q}\rtimes Z_{d})=q$ by Cziszter and Domokos \cite[Proposition
6.2]{DomokosCziszter}. This proof here is an adapted version to the modular case of the latter
proof by Cziszter and Domokos. We want to thank Cziszter for explaining some
details of their exposition to us via eMail. In the proof we will use a
decomposition of the regular representation of $G=Z_{q}\rtimes Z_{d}$ into a
direct sum of (not necessarily indecomposable) smaller modules which we construct below. 
We write  $G=\langle g,h\rangle$ such that 
\[
\ord(g)=d,\quad\quad\quad \ord(h)=q,
\]
and set $H:=\langle h\rangle\cong Z_{q}$ and $D:=\langle g\rangle\cong Z_{d}$.
Then with $k+q\Z$ a suitable element of multiplicative order $d$ in
$\Z/q\Z$  we have
\[
g^{a}h^{b}=h^{k^{a}b}g^{a} \quad\quad\text{ for all }a,b\in\Z.
\]
For convenience, we will  write $k^{-a}$ for a suitable
representative of the class $(k+q\Z)^{-a}$. Then $V_{\reg}$ has a basis $\{v_{g^{j}h^{r}}\mid
j=0,\ldots,d-1,\,\,r=0,\ldots,q-1\}$. We choose a primitive $q$th root of
unity $\zeta\in \kk$ and define
\[
w_{i,j}:=\sum_{r=0}^{q-1}\zeta^{-ir}v_{g^{j}h^{r}}\in V_{\reg}\quad\quad\text{ for
 } j=0,\ldots,d-1,\,\,\,i=0,\ldots,q-1.
\]

\begin{Lemma}\label{VregForZqZd}
For all $i=0,\ldots,q-1$, the vector space
\[
V_{i}:=\langle w_{i,0},w_{i,1},\ldots,w_{i,d-1}\rangle
\]
is a $G$-submodule of $V_{\reg}$, and we have a decomposition
\[
V_{\reg}=\bigoplus_{i=0}^{q-1}V_{i}.
\]
The action of $G$ on the summands is given by
\[
g^{a}\cdot w_{i,j} = w_{i,(j+a\mod d)} \quad \text{ and
}\quad h^{b}\cdot w_{i,j}=(\zeta^{i})^{k^{-j}b}w_{i,j}
\]
for $j=0,\ldots,d-1,\,\,\,i=0,\ldots,q-1$. 
\end{Lemma}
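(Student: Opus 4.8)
The plan is to derive the two displayed action formulas by a direct computation with the left-regular action of $G$ on $V_{\reg}$, and then to obtain both the $G$-stability of each $V_i$ and the decomposition $V_{\reg}=\bigoplus_i V_i$ as formal consequences of them together with a dimension count.

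First I would compute the action of $g^a$. Since $G$ acts by left translation, $g^a\cdot v_{g^jh^r}=v_{g^{a+j}h^r}=v_{g^{(a+j)\bmod d}\,h^r}$, using $\ord(g)=d$; multiplying by $\zeta^{-ir}$ and summing over $r$ gives $g^a\cdot w_{i,j}=w_{i,(j+a\bmod d)}$ immediately. Next I would compute the action of $h^b$. The commutation relation $g^ah^b=h^{k^ab}g^a$, taken with $a=j$ and with $b$ replaced by $k^{-j}b$ (legitimate since $k$ is a unit modulo $q$), rearranges to $h^bg^j=g^jh^{k^{-j}b}$. Hence $h^b\cdot v_{g^jh^r}=v_{h^bg^jh^r}=v_{g^jh^{(k^{-j}b+r)\bmod q}}$, and re-indexing the defining sum of $w_{i,j}$ by $s\equiv k^{-j}b+r\pmod q$ pulls out the scalar $\zeta^{ik^{-j}b}$, so that $h^b\cdot w_{i,j}=(\zeta^i)^{k^{-j}b}w_{i,j}$; this scalar is well defined because $\zeta^q=1$ and $k^{-j}$ is well defined modulo $q$.

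With the action formulas in hand, $G$-stability of $V_i$ is automatic: $G=\langle g,h\rangle$, and both $g$ and $h$ send each spanning vector $w_{i,j}$ of $V_i$ back into $V_i$. For the decomposition I would argue by a change-of-basis/dimension count. For each fixed $j$, the transition matrix $(\zeta^{-ir})_{0\le i,r\le q-1}$ from $(v_{g^jh^0},\dots,v_{g^jh^{q-1}})$ to $(w_{0,j},\dots,w_{q-1,j})$ is a Vandermonde matrix in the distinct scalars $1,\zeta,\dots,\zeta^{q-1}$, hence invertible; here one uses that $\zeta$ is a \emph{primitive} $q$th root of unity, which exists in $\kk$ precisely because $p\mid d$ and $d\mid q-1$ force $p\ne q$. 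Therefore the family $\{w_{i,j}\mid 0\le i\le q-1,\ 0\le j\le d-1\}$, being the image of the basis $\{v_{g^jh^r}\}$ of $V_{\reg}$ under an invertible block-diagonal change of coordinates, is itself a $\kk$-basis of $V_{\reg}$. In particular $w_{i,0},\dots,w_{i,d-1}$ are linearly independent, so $\dim V_i=d$, and $V_{\reg}=\bigoplus_{i=0}^{q-1}V_i$.

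There is no serious obstacle here: the only points that need care are the bookkeeping of exponents modulo $q$ and modulo $d$, and the correct inversion of the commutation relation in order to move powers of $h$ past powers of $g$.
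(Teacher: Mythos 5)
Your proof is correct and follows essentially the same route as the paper: the same direct computation of the $g^a$- and $h^b$-actions using the commutation relation $h^bg^j=g^jh^{k^{-j}b}$, and the same Vandermonde change-of-basis argument (for each fixed $j$) to see that the $w_{i,j}$ form a basis, giving the direct sum decomposition. The only difference is the order of the steps, which is immaterial.
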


\begin{proof}
As $(\zeta^{-ir})_{i,r=0,\ldots,q-1}\in \kk^{q\times q}$ is a Vandermonde matrix
of full rank, we obtain for any $j=0,\ldots,d-1$ the equality of vector subspaces
\[
\langle w_{0,j},w_{1,j},\ldots,w_{q-1,j}\rangle=\langle v_{g^{j}h^{0}},v_{g^{j}h^{1}},\ldots,v_{g^{j}h^{q-1}}\rangle.
\]
Therefore, the set $\{w_{i,j}\}_{i,j}$ is a basis of $V_{\reg}$ and we get the
desired direct sum decomposition as vector spaces. We also see that
\begin{eqnarray*}
g^{a}\cdot w_{i,j}=g^{a}\cdot \sum_{r=0}^{q-1}\zeta^{-ir}v_{g^{j}h^{r}}
=\sum_{r=0}^{q-1}\zeta^{-ir}v_{g^{a+j}h^{r}}=w_{i,(j+a\mod d)}
\end{eqnarray*}
and
\begin{eqnarray*}
h^{b}\cdot w_{i,j}&=&h^{b}\cdot \sum_{r=0}^{q-1}\zeta^{-ir}v_{g^{j}h^{r}}
=\sum_{r=0}^{q-1}\zeta^{-ir}v_{h^{b}g^{j}h^{r}}
=\sum_{r=0}^{q-1}\zeta^{-i(r+k^{-j}b-k^{-j}b)}v_{g^{j}h^{k^{-j}b+r}}\\
&=&\zeta^{ik^{-j}b}\sum_{r=0}^{q-1}\zeta^{-i(r+k^{-j}b)}v_{g^{j}h^{r+k^{-j}b}}=\zeta^{ik^{-j}b}w_{i,j},
\end{eqnarray*}
as desired, and therefore the $V_{i}$'s are $G$-submodules.
\end{proof}

\begin{proof}[Proof of Proposition~\ref{PropZqZd}.]
As $Z_{q}$ is a subgroup of $G$, we have $q=\sigma(Z_{q})\le\sigma(G)$ by Corollary~\ref{SigmaCyclicGroups} and Proposition \ref{BoundForSubgroups}, so it remains
to show $\sigma(G)\le q$. By Corollary \ref{corsigmavreg}, Lemma~\ref{VregForZqZd} and Proposition~\ref{SigmaDirectSumFiniteGroups} we have
\[
\sigma(G)=\sigma(G,V_{\reg})=\sigma(G,\oplus_{i=0}^{q-1}V_{i})=\max\{\sigma(G,V_{i})\mid
i=0,\ldots,q-1\}.
\]
Note that $V_{0}$ is obtained from the regular representation of $Z_{d}$ and the
projection $G\rightarrow Z_{d}$. Therefore,
$\sigma(G,V_{0})\le\sigma(Z_{d})=d<q$. As the $\zeta^{i}$'s for $i=1,\ldots,q-1$ are
just different primitive roots of unity, the modules $V_{i}$ for $i=1,\ldots,q-1$ are
pairwise isomorphic, so it is enough to show $\sigma(G,V_{1})\le q$. We write
$V:=V_{1}$ and $K[V]=K[x_{0},\ldots,x_{d-1}]$.  The action on $K[V]$ then has
the following form:  For all $a,b\in\Z$ we have
\begin{eqnarray*}
g^{a} \cdot x_{j}&=&x_{(j+a\mod d)},\\
h^{b}\cdot x_{j}&=&\zeta^{-k^{-j}b}x_{j} \quad \text{ for all }j=0,\ldots,d-1.
\end{eqnarray*}
Note that  $k^{-j}$ is understood mod $q$ at all times. From this we see that a monomial
\[
x_{j_{1}}^{\alpha_{1}}\cdot\ldots \cdot x_{j_{s}}^{\alpha_{s}} \text{ is }
H\text{-invariant if and only if}\]\[ 
\alpha_{1}\bar{k}^{-j_{1}}+\ldots+ \alpha_{s}\bar{k}^{-j_{s}}=\bar{0}\in \Z/q\Z.
\]
Now for any non-empty subset $S:=\{j_{1},\ldots,j_{s}\}\subseteq \{0,\ldots,d-1\}$, we
consider the subset (of same length $s$)
$\{\bar{k}^{-j_{1}},\ldots,\bar{k}^{-j_{s}}\}\subseteq (\Z/q\Z)^{\times}$. By
\cite[Lemma 6.1]{DomokosCziszter} there exist
$\alpha_{1},\ldots,\alpha_{s}>0$ such that $\alpha_{1}+\ldots+\alpha_{s}\le
q$ and
\[
\alpha_{1}\bar{k}^{-j_{1}}+\ldots+ \alpha_{s}\bar{k}^{-j_{s}}=\bar{0}\in \Z/q\Z.
\]
We can thus define the monomial \[m_{S}:=x_{j_{1}}^{\alpha_{1}}\cdot\ldots
\cdot x_{j_{s}}^{\alpha_{s}}\in \kk[V]^{H},\] where $\alpha_1,\ldots, \alpha_s$ are chosen in such a way as to minimise $\alpha_{1}+\ldots+\alpha_{s}$.
We now claim that the common zero set of all the orbit sums
\[
o_{D}(m_{S}):=\sum_{m'\in D\cdot m_{S}}m'\in \kk[V]^{G}
\]
(for all non-empty subsets $S$) is $0$: otherwise, take $u=(u_{0},\ldots,u_{d-1})\in V\setminus\{0\}$  in the
common zero set of all those $o_{D}(m_{S})$. Consider the non-zero coordinates of $u$,
\[
S=\{j_{1},\ldots,j_{s}\}:=\{j\in\{0,\ldots,d-1\}\mid u_{j}\ne 0\}\ne \emptyset.
\]
By assumption, $o_{D}(m_{S})(u)=0$. We show this is a contradiction by considering
two cases. Define $m:=x_{j_{1}}\cdot\ldots\cdot x_{j_{s}}$ (which might be
different from $m_{S}$). 

\underline{First}, assume the
$D$-stabilizer of $m$ is trivial. Then every monomial in the orbit
$D\cdot m_{S}$ different from $m_{S}$ contains a variable outside
$\{x_{j_{1}},\ldots,x_{j_{s}}\}$, hence its value on $u$ is zero. Therefore,
\[
o_{D}(m_{S})(u)=m_{S}(u)=u_{j_{1}}^{\alpha_{1}}\cdot\ldots\cdot
u_{j_{s}}^{\alpha_{s}}\ne 0,
\]
a contradiction. 

\underline{Second}, assume the $D$-stabilizer of $m$ is non-trivial. 
So there exists a non-identity element $g^{a}\in D$ with $g^{a}\cdot m=m$. This
means
\[
\{j_{1}+d\Z,\ldots,j_{s}+d\Z\}=\{j_{1}+a+d\Z,\ldots,j_{s}+a+d\Z\},
\]
hence
\[
\underbrace{\bar{k}^{-j_{1}}+\ldots+\bar{k}^{-j_{s}}}_{=:w}=\bar{k}^{-j_{1}-a}+\ldots+\bar{k}^{-j_{s}-a}=\bar{k}^{-a}\cdot
w\in\Z/q\Z.
\]
Hence $(\bar{k}^{-a}-1)w=0$. As $\Z/q\Z$ is a field and $\bar{k}^{-a}\ne\bar{1}$, we get
\[
0=w=\bar{k}^{-j_{1}}+\ldots+\bar{k}^{-j_{s}},
\]
which means the monomial $m$ is $H$-invariant. By the minimality
assumption we have $m=m_{S}$. Now again, every monomial in the orbit
$D\cdot m$ different from $m$ contains a variable outside
$\{x_{j_{1}},\ldots,x_{j_{s}}\}$, hence its value on $u$ is zero. So we have 
\[
o_{D}(m)(u)=m(u)=u_{j_{1}}\cdot\ldots\cdot u_{j_{s}}\ne 0,
\]
a contradiction.
\end{proof}

In \cite[Theorem 7.1]{DomokosCziszter}, it is shown that for a
non-modular, non-cyclic group $G$, we have $\sigma(G)\le \frac{|G|}{l}$,
where  $l$ denotes the smallest prime
divisor of $|G|$. The following, which is part (c) of Theorem
\ref{thmsigmaleg}, is an extension of this to the modular case. 

\begin{Theorem}\label{propsigmapnilpotent}  
Let $G$ be a finite $p$-nilpotent group which has a non-normal Sylow-$p$-subgroup. If $l$ denotes the smallest prime
divisor of $|G|$, then
\[
\sigma(G)\le\frac{|G|}{l}.
\]
\end{Theorem}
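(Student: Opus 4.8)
The plan is to argue by induction on $|G|$, i.e.\ to consider a counterexample $G$ of minimal order, using throughout that subgroups and quotients of $p$-nilpotent groups are again $p$-nilpotent. Write $G=N\rtimes P$ with $N$ the normal $p$-complement and $P$ a Sylow-$p$-subgroup, so $p\nmid|N|$; non-normality of $P$ is equivalent to the conjugation homomorphism $\phi\colon P\to\Aut(N)$ being non-trivial. Since $G/N\cong P$ we have $\sigma(G/N)\le|P|$, so Proposition~\ref{propsigmanormsubgroup} gives $\sigma(G)\le\sigma(N)\,|P|$. If $N$ is \emph{not} cyclic, then $N$ is a proper subgroup of order prime to $p$, so Proposition~\ref{propdcnonmodularsigma} gives $\sigma(N)\le|N|/l_{N}$ where $l_{N}$ is the least prime dividing $|N|$; since $l\le l_{N}$ this already forces $\sigma(G)\le|G|/l_{N}\le|G|/l$, contradicting the choice of $G$. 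Hence $N$ is cyclic. Note that in this case the elementary product bounds of Section~\ref{SecSigmaGeneral} only yield the useless estimate $\sigma(G)\le|N|\,|P|=|G|$, so more work is required.

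Next I would run a chain of reductions, each of the following form: produce a \emph{proper} subgroup $H<G$ which is $p$-nilpotent with a non-normal Sylow-$p$-subgroup; by minimality $\sigma(H)\le|H|/l_{H}\le|H|/l$, so Proposition~\ref{propsigmasubgroup} gives $\sigma(G)\le\sigma(H)[G:H]\le|G|/l$, a contradiction --- hence no such $H$ exists and $G$ is forced into a special form. First, choosing $x\in P$ with $\phi(x)\ne\id$, the subgroup $H=N\langle x\rangle=N\rtimes\langle x\rangle$ has non-normal Sylow-$p$-subgroup $\langle x\rangle$ (in a semidirect product $A\rtimes B$ the factor $B$ is normal precisely when it acts trivially on $A$), so $H=G$ and $P=\langle x\rangle$ is cyclic. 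Next, let $q$ be a prime on which $P$ acts non-trivially (one exists since $N=\prod_{q}N_{q}$ and $\phi\ne\id$); then $P$ acts non-trivially even on the unique subgroup $Z_{q}\le N$ of order $q$ --- which is characteristic in $N$, hence normal in $G$ --- because the restriction map $\Aut(N_{q})\to\Aut(Z_{q})$ has kernel of $q$-power order, so the image of $P$ in $\Aut(N_{q})$ (a $p$-group, with $p\ne q$) meets that kernel trivially and injects non-trivially into $\Aut(Z_{q})$. As $Z_{q}$ is normal in $G$ and $P$ acts non-trivially on it, $H=Z_{q}\rtimes P$ has non-normal Sylow-$p$-subgroup, so $H=G$ and $N=Z_{q}$. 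Thus $G=Z_{q}\rtimes Z_{p^{c}}$ with $\phi$ non-trivial, whence $p\mid q-1$; as the only prime divisors of $|G|=p^{c}q$ are $p$ and $q$, this gives $q>p$ and $l=p$, so it remains to prove $\sigma(G)\le p^{c-1}q$.

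To conclude I would invoke Proposition~\ref{PropZqZd}. Let $m\ge1$ be the integer with $\im\phi\cong Z_{p^{m}}$ (so $p^{m}\mid q-1$) and put $C=\ker\phi\cong Z_{p^{c-m}}$. If $C$ is trivial, then $\phi$ is faithful, $p^{c}\mid q-1$, and $G$ is one of the groups of Proposition~\ref{PropZqZd} with $d=p^{c}$, so $\sigma(G)=q\le p^{c-1}q$. If $C$ is non-trivial, then $C$ centralises $N$ and, being contained in the abelian group $P$, centralises $P$ as well, hence $C$ is a non-trivial central --- in particular normal --- subgroup of $G$; moreover $G/C\cong Z_{q}\rtimes Z_{p^{m}}$ with faithful action, so $\sigma(G/C)=q$ by Proposition~\ref{PropZqZd}, and Proposition~\ref{propsigmanormsubgroup} then gives $\sigma(G)\le\sigma(C)\,\sigma(G/C)\le|C|\,q=p^{c-m}q\le p^{c-1}q$ since $m\ge1$. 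In either case $\sigma(G)\le|G|/l$, contradicting the choice of $G$, so no counterexample exists.

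The step I expect to be the main obstacle is getting past the cyclic-$N$ case: every multiplicativity estimate of Section~\ref{SecSigmaGeneral} collapses to $\sigma(G)\le|G|$ there, so the whole argument rests on paring $G$ down to the shape $Z_{q}\rtimes Z_{p^{c}}$ without ever destroying non-normality of the Sylow-$p$-subgroup --- which is exactly where the coprimality $p\ne q$ enters, ensuring that a non-trivial $p$-part of the automorphism group survives the passage to the subgroup $Z_{q}$ --- and then on recognising $Z_{q}\rtimes Z_{p^{m}}$, after factoring out $\ker\phi$, as precisely the family already treated in Proposition~\ref{PropZqZd}.
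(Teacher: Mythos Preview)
Your proof is correct and follows essentially the same strategy as the paper's: reduce to $N$ cyclic via Proposition~\ref{propdcnonmodularsigma}, then peel off pieces of $P$ and $N$ using Propositions~\ref{propsigmanormsubgroup} and~\ref{propsigmasubgroup} until the group is recognised as one covered by Proposition~\ref{PropZqZd}. The differences are organisational. You frame the argument as a minimal counterexample and order the reductions as (i) make $P$ cyclic, (ii) make $N$ of prime order, (iii) quotient by $\ker\phi$; the paper instead (i) quotients by $\ker\varphi$, (ii) passes to a $Z_{p}$ inside $P$, (iii) finds a prime $q\mid |N|$ on which the action survives. The one substantive simplification in your version is the passage from $N_{q}$ to $Z_{q}$: you observe directly that the kernel of $\Aut(Z_{q^{s}})\to\Aut(Z_{q})$ has $q$-power order, so a non-trivial $p$-subgroup of $\Aut(Z_{q^{s}})$ must act non-trivially on $Z_{q}$; the paper proves the equivalent statement via the number-theoretic Lemma~\ref{acong1apmod1}. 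Your route avoids that lemma entirely, at the cost of carrying a cyclic $P$ of order $p^{c}$ (rather than $p$) to the end and handling $\ker\phi$ there. Both approaches arrive at Proposition~\ref{PropZqZd} as the decisive input.
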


\begin{proof}
Recall that a group $G$ is $p$-nilpotent if and only if it has a  Sylow-$p$-subgroup
$P$ of $G$ with a normal complement, i.e. a normal subgroup $H\unlhd G$ such
that $G=PH$ and $P\cap H$ is the trivial group. Let $l'$ denote the smallest prime divisor of
$|H|$. In case $H$ is not cyclic, by the aforementioned result of Cziszter and
Domokos, we have $\sigma(H)\le \frac{|H|}{l'}$, thus
$\sigma(G)\le\sigma(H)[G:H]\le\frac{|H|}{l'}[G:H]=\frac{|G|}{l'}\le\frac{|G|}{l}$. So
we may assume that $H\cong Z_{h}$ is cyclic of order $h$. We have a
group homomorphism 
\[
\varphi: P\rightarrow \Aut(H),\quad a\mapsto \varphi_{a}:
\left\{ 
\begin{array}{rcl}
H&\rightarrow& H\\
h &\mapsto & aha^{-1}.
\end{array}
\right.
\]
As by assumption $P$ is not a normal subgroup, we have
$\varphi(P)\ne\{\id_{H}\}$. Let $U=\ker(\varphi)$, and write
$\overline{\varphi}: P/U\rightarrow \Aut(H)$ for the induced injective morphism. Note
that $U\ne P$ as $\varphi(P)\ne\{\id_{H}\}$. We first show that $U$ is a
normal subgroup of $G$. By definition, $hu=uh$ for all $u\in U$ and $h\in
H$. As $U$ is a normal subgroup of $P$, for any $a\in P$ and $h\in H$ we
hence have $haU=hUa=Uha$, so indeed $U\unlhd G$. The canonical epimorphism
$P\twoheadrightarrow P/U$ induces an epimorphism
\[
G=HP\cong H\rtimes_{\varphi}P\twoheadrightarrow H\rtimes_{\overline{\varphi}}(P/U)
\]
with kernel $U$, hence we have $G/U\cong H\rtimes_{\overline{\varphi}}(P/U)$.
Let $l''$ denote the smallest prime-divisor of $G/U$. If we can show the claim
for $G/U$, i.e.
$\sigma(G/U)\le\frac{|G/U|}{l''}$, then
$\sigma(G)\le\sigma(G/U)\sigma(U)\le\frac{|G/U|}{l''}|U|=\frac{|G|}{l''}\le \frac{|G|}{l}$,
so we are done. Hence we can replace $G$ by $G/U$, i.e. we will assume that
$G\cong H\rtimes_{\varphi} P$ where $\varphi: P\hookrightarrow \Aut(H)$ is an
injective map and $P$ is a non-trivial $p$-group. We now choose a cyclic subgroup $Z_{p}$ of order $p$ of
$P$. The restriction of $\varphi$ to $Z_{p}$ is of course still injective. By
the same argument as before, it is enough to show the claim for the subgroup
$H\rtimes Z_{p}$ of $H\rtimes P$. Thus we now will assume that $G\cong
Z_{h}\rtimes_{\varphi} Z_{p}$ where $\varphi:
Z_{p}\hookrightarrow\Aut(Z_{h})\cong(\Z/h\Z)^{\times}$ is a monomorphism.
Therefore, the element $\varphi(1+p\Z)=a+h\Z$ is of multiplicative order $p$ in
$(\Z/h\Z)^{\times}$. We write $h=q_{1}^{s_{1}}\cdot\ldots\cdot q_{e}^{s_{e}}$
for the prime factorization of $h$ with different primes $q_{1},\ldots, q_{e}$.
The cyclic subgroups $U_{q_{i}}:=\langle \frac{h}{q_{i}}+ h\Z\rangle$ of $Z_{h}$ of order
$q_{i}$ are characteristic. Therefore for each $i$, we have an induced
homomorphism $\varphi_{i}: Z_{p}\rightarrow\Aut(U_{q_{i}})$. As $Z_{p}$ is of
prime order, this homomorphism
is either injective or trivial, where it is trivial if and only if
\[
a\cdot q_{1}^{s_{1}}\cdot\ldots\cdot q_{i}^{s_{i}-1}\cdot\ldots\cdot
q_{e}^{s_{e}}\equiv q_{1}^{s_{1}}\cdot\ldots\cdot q_{i}^{s_{i}-1}\cdot\ldots\cdot
q_{e}^{s_{e}}\mod q_{1}^{s_{1}}\cdot\ldots \cdot q_{e}^{s_{e}},
\]
i.e. if and only if $a\equiv 1 \mod q_{i}$. We want to show that at least one
of the maps $\varphi_{i}$ is injective. For the sake of a proof by
contradiction, we therefore assume $a\equiv 1\mod q_{i}$ for all $i=1,\ldots,e$. 
As $a$ has multiplicative order $p$ modulo $ h$, we have $a^{p}\equiv 1 \mod
q_{1}^{s_{1}}\cdot\ldots \cdot q_{e}^{s_{e}}$, so particularly $a^{p}\equiv
1\mod q_{i}^{s_{i}}$ for all $i=1,\ldots,e$. Lemma \ref{acong1apmod1} therefore
implies $a\equiv 1\mod q_{i}^{s_{i}}$ for $i=1,\ldots,e$, hence $a\equiv 1
\mod q_{1}^{s_{1}}\cdot\ldots \cdot q_{e}^{s_{e}}$, i.e. $a\equiv 1\mod h$, a
contradiction to $a+h\Z$ being of multiplicative order $p$. So we have that
$\varphi_{i}$ is injective for at least one $i$. Then for the subgroup
$U_{q_{i}}\rtimes_{\varphi_{i}}Z_{p}$ of $G=Z_{h}\rtimes_{\varphi} Z_{p}$, by
Proposition \ref{PropZqZd} we have
$\sigma(U_{q_{i}}\rtimes_{\varphi_{i}}Z_{p})=q_{i}=\frac{|U_{q_{i}}\rtimes_{\varphi_{i}}Z_{p}|}{p}$,
which as before implies $\sigma(G)\le\frac{|G|}{p}\le\frac{|G|}{l}$, so we are done.
\end{proof}

We have used the following number-theoretic lemma in the proof.

\begin{Lemma}\label{acong1apmod1}
Let $p,q>0$ be coprime, $s>0$ and $a\in\Z$. If
\[
a\equiv 1\mod q\quad\text{ and }\quad a^{p}\equiv 1\mod q^{s},
\]
then
\[
a\equiv 1\mod q^{s}.
\]
\end{Lemma}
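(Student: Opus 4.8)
The plan is to prove the implication directly by induction on $s$, keeping in mind that $q$ is \emph{not} assumed to be prime. Since $a\equiv 1\bmod q$ we may write $a=1+q\,c_1$ for some $c_1\in\ZZ$; the case $s=1$ is precisely the hypothesis $a\equiv 1\bmod q$, so there is nothing to prove. For $s>1$ it suffices to show: if $a\equiv 1\bmod q^{t}$ for some $t$ with $1\le t<s$, then $a\equiv 1\bmod q^{t+1}$. Iterating this from $t=1$ up to $t=s-1$ then yields $a\equiv 1\bmod q^{s}$.

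For the inductive step, write $a=1+q^{t}c$ with $c\in\ZZ$. Expanding by the binomial theorem,
$$a^{p}=(1+q^{t}c)^{p}=1+p\,q^{t}c+\sum_{j=2}^{p}\binom{p}{j}q^{jt}c^{j}\equiv 1+p\,q^{t}c \pmod{q^{t+1}},$$
because every summand with $j\ge 2$ is divisible by $q^{2t}$ and $2t\ge t+1$ (as $t\ge 1$). On the other hand, since $t+1\le s$, the hypothesis $q^{s}\mid a^{p}-1$ gives $q^{t+1}\mid a^{p}-1$. Combining these, $q^{t+1}\mid p\,q^{t}c$, hence $q\mid p\,c$. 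As $\gcd(p,q)=1$, Euclid's lemma (valid for arbitrary $q$) forces $q\mid c$, and therefore $q^{t+1}\mid q^{t}c=a-1$, i.e. $a\equiv 1\bmod q^{t+1}$, completing the step.

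I do not expect a genuine obstacle here: the computation is the usual ``lifting the exponent'' trick, and the coprimality of $p$ and $q$ is exactly what prevents the linear term $p\,q^{t}c$ from being absorbed into a higher power of $q$. The one point deserving care is that $q$ may be composite, so one should not phrase the argument via a single ``$q$-adic valuation''; instead the step uses only the elementary divisibility fact that $q\mid p\,c$ together with $\gcd(p,q)=1$ implies $q\mid c$, which holds without any primality assumption on $q$ (or on $p$).
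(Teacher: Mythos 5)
Your argument is correct: the inductive step is sound (the terms with $j\ge 2$ are divisible by $q^{2t}$ and $2t\ge t+1$), the appeal to the generalized Euclid/Gauss lemma --- $q\mid pc$ and $\gcd(p,q)=1$ imply $q\mid c$, valid for composite $q$ via B\'ezout --- is exactly the right tool, and iterating from $t=1$ to $t=s-1$ gives the claim. The paper reaches the same conclusion without induction: writing $a=1+kq$, it factors
\[
a^{p}-1 \;=\; kq\Bigl(p+kq\sum_{i=2}^{p}\binom{p}{i}(kq)^{i-2}\Bigr),
\]
observes that the second factor is $\equiv p \pmod q$ and hence coprime to $q^{s}$, and concludes directly that $q^{s}\mid kq$, i.e.\ $q^{s}\mid a-1$. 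So both proofs rest on the same binomial expansion and on $\gcd(p,q)=1$; the difference is that the paper extracts all of $q^{s}$ in one step by exhibiting the cofactor of $a-1$ in $a^{p}-1$ as a unit modulo $q^{s}$, whereas you lift one power of $q$ at a time in the style of the lifting-the-exponent lemma. Your version is marginally longer but makes the role of coprimality visible at each level and, as you note, carefully avoids any valuation language that would presuppose $q$ prime; the paper's version is shorter but requires the (easy) observation that coprimality to $q$ implies coprimality to $q^{s}$. Either proof is perfectly adequate for the application in Theorem \ref{propsigmapnilpotent}.
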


\begin{proof}
We have $a=kq+1$ for some $k\in\Z$ by the first assumption. Hence by the second assumption,
\[
a^{p}=(1+kq)^{p}=1+kq \left(\sum_{i=1}^{p}{p\choose i}(kq)^{i-1}\right) \equiv 1\mod
q^{s}.
\]
Therefore,
\[
kq\left(\sum_{i=1}^{p}{p\choose
    i}(kq)^{i-1}\right)=kq\left(p+kq\left(\sum_{i=2}^{p}{p\choose
      i}(kq)^{i-2}\right)\right)\equiv 0\mod q^{s}.
\]
As $p,q$ are coprime, the second factor $p+kq(\cdots)$ is coprime to $q^{s}$,
and 
hence it follows $kq\equiv 0\mod q^{s}$. Thus we have $a=kq+1\equiv 1\mod q^{s}$, which is what we
wanted to prove.
\end{proof}

\begin{proof}[Proof of Theorem \ref{thmsigmaleg} (a).]
It remains only to show the second part of (a). If $\sigma(G)=|G|$, we have
already seen in Corollary \ref{corngpmodp} that $N_{G}(P)/P$ must be cyclic. Now assume
additionally $P$ is abelian and $G\ne P$.  If $N_{G}(P)=P$, then Burnside's Theorem (see
\cite[Theorem 7.50]{RotmanGroups}) implies $G$ is $p$-nilpotent, hence
$\sigma(G)<|G|$ by Theorem \ref{propsigmapnilpotent}, a
contradiction. Therefore, $N_{G}(P)/P$ must be non-trivial.
\end{proof}

It remains an open question to classify those finite groups which satisfy
$\sigma(G)=|G|$. Though we do not have any evidence, the following conjecture was a motivation
for many of our results:

\begin{Conj}
Suppose $G$ is a finite group. Let $P$ be a Sylow-$p$-subgroup of $G$. Then $\sigma(G)=|G|$ implies $P$ is normal in $G$.
\end{Conj}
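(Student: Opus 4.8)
We sketch a strategy for attacking this conjecture.

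The plan is to argue by induction on $|G|$. Assuming the conjecture for all groups of smaller order, let $G$ be a counterexample of least order, so $\sigma(G)=|G|$ while $P\not\trianglelefteq G$. The key elementary observation is that the class of finite groups $H$ with $\sigma(H)=|H|$ is closed under passing to subgroups and to quotients: if $\sigma(H)<|H|$ for some subgroup $H\lneq G$, then Proposition \ref{propsigmasubgroup} gives $\sigma(G)\le\sigma(H)[G:H]\le|G|-[G:H]<|G|$, and if $\sigma(G/N)<|G/N|$ for some $1\ne N\trianglelefteq G$, then Proposition \ref{propsigmanormsubgroup} gives $\sigma(G)\le\sigma(N)\sigma(G/N)\le|N|(|G/N|-1)<|G|$. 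Hence every proper subgroup $H$ of $G$ satisfies $\sigma(H)=|H|$, so by the induction hypothesis a Sylow $p$-subgroup $P_H$ of $H$ is normal in $H$, and then Corollary \ref{corngpmodp} applied to $H$ forces $H/P_H=N_H(P_H)/P_H$ to be cyclic. In particular every proper subgroup of $G$ is solvable, and by Theorem \ref{propsigmapnilpotent} the group $G$ is not $p$-nilpotent.

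I would then strip off the normal $p$- and $p'$-radicals. If $O_p(G)\ne 1$, then $G/O_p(G)$ is a smaller group with $\sigma(G/O_p(G))=|G/O_p(G)|$ whose Sylow $p$-subgroup $P/O_p(G)$ is still non-normal, contradicting minimality; so $O_p(G)=1$. If $O_{p'}(G)\ne 1$, then in $\overline G:=G/O_{p'}(G)$ the image of $P$ is a Sylow $p$-subgroup: if it is non-normal we again obtain a smaller counterexample, while if it is normal then $PO_{p'}(G)\trianglelefteq G$, which is either all of $G$ (making $G$ $p$-nilpotent) or a proper subgroup in which $P$ is a normal, hence characteristic, Sylow $p$-subgroup (making $P\trianglelefteq G$); both are impossible. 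Hence the Fitting subgroup $F(G)$ is trivial. Since a solvable group of order $>1$ has nontrivial Fitting subgroup, $G$ is non-solvable; since $F(G)=1$ forbids abelian minimal normal subgroups, a minimal normal subgroup of $G$ is a direct product of non-abelian simple groups, and as every proper subgroup of $G$ is solvable this forces that product to be all of $G$ and to have a single factor. Thus $G$ is a non-abelian simple group all of whose proper subgroups are solvable, i.e. a minimal simple group. Finally, every $p'$-subgroup $K$ of $G$ is proper with $\sigma(K)=|K|$, so $K=N_K(1)$ is cyclic by Corollary \ref{corngpmodp}; were $p$ odd, the Sylow $2$-subgroup of $G$ would then be cyclic, hence $G$ would have a normal $2$-complement and in particular be solvable, a contradiction. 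So $p=2$.

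To conclude, I would invoke Thompson's classification of the minimal simple groups: $G$ is isomorphic to one of $\mathrm{PSL}_2(2^q)$, $\mathrm{PSL}_2(3^q)$ ($q$ an odd prime), $\mathrm{PSL}_2(r)$ ($r>3$ prime, $r\equiv\pm 2\bmod 5$), $\mathrm{Sz}(2^q)$ ($q$ an odd prime), or $\mathrm{PSL}_3(3)$. In each of the first four families, the normalizer of a suitable cyclic Hall subgroup is a dihedral group of order $2m$ with $m>1$ odd, hence contains a dihedral subgroup of order $2q'$ for some odd prime $q'$, and the Sylow $2$-subgroup of the latter is not normal, contradicting the conclusion of the first paragraph. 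And $\mathrm{PSL}_3(3)$ contains the Singer normalizer $Z_{13}\rtimes Z_3$, a non-cyclic group of odd order, which cannot be a $2$-group extended by a cyclic group — again a contradiction. Hence no minimal counterexample exists and the conjecture holds.

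The main obstacle is precisely this last step. The reduction to a minimal simple group over a field of characteristic $2$ uses only machinery developed in this paper, but disposing of the surviving simple groups appears to require the (deep) classification of minimal simple groups, since at present there is no characteristic-free way to compute — or even to bound from above — $\sigma(G)$ for an almost simple $G$ directly. A genuinely elementary proof would come down to showing that no non-abelian simple group can have all of its proper subgroups of the form (normal $2$-group)-by-(cyclic), equivalently that every non-abelian simple group contains a subgroup isomorphic to $S_3$ or a non-cyclic subgroup of odd order, a statement that would be of independent interest.
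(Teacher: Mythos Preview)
The paper does not prove this statement: it is explicitly left as an open conjecture, with only the remark that the $p$-nilpotent case follows from Theorem~\ref{propsigmapnilpotent}, together with the observation that any $p$-nilpotent subquotient with non-normal Sylow $p$-subgroup already forces $\sigma(G)<|G|$. So there is no proof in the paper to compare against; your proposal goes well beyond what the paper establishes.

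Your reduction is sound. The closure of the class $\{H:\sigma(H)=|H|\}$ under subgroups and quotients follows exactly as you say from Propositions~\ref{propsigmasubgroup} and~\ref{propsigmanormsubgroup} (using that $\sigma$ takes integer values), and together with the induction hypothesis and Corollary~\ref{corngpmodp} this forces every proper subgroup of a minimal counterexample to be of the form (normal Sylow $p$-subgroup)-by-(cyclic). The elimination of $O_p(G)$ and $O_{p'}(G)$ is correct; in the latter, for the subcase $PO_{p'}(G)\lneq G$ you are using that $P$ is the unique, hence characteristic, Sylow $p$-subgroup of this proper normal subgroup. The passage to a minimal simple group then follows. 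The phrase ``hence $G$ would have a normal $2$-complement and in particular be solvable'' is slightly loose---solvability of an odd-order complement is Feit--Thompson---but since Thompson's classification subsumes this there is no extra cost, and in any case simplicity alone already rules out a nontrivial proper normal $2$-complement.

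The case analysis on Thompson's list also checks out. Each $\mathrm{PSL}_2$ and Suzuki group on the list contains a dihedral subgroup $D_{2m}$ with $m>1$ odd (the normalizer of a suitable torus), hence a copy of $S_3$, whose Sylow $2$-subgroup is not normal; and in $\mathrm{PSL}_3(3)$ the normalizer of a Sylow $13$-subgroup has order $39$ and is non-abelian (were it cyclic, Burnside's transfer would give a normal $13$-complement in a simple group), yielding a non-cyclic subgroup of odd order. Thus, granting Thompson's classification of minimal simple groups, your argument is a complete proof of the conjecture. You are right that this is the essential external input; the reduction itself uses only the machinery of the present paper, organised more sharply than the authors' closing remark about $p$-nilpotent subquotients suggests.
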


Note that for $p$-nilpotent
groups, the conjecture follows from Theorem \ref{propsigmapnilpotent}.
From this conjecture, we would get the classification that  $\sigma(G)=|G|$ if and only if $P$ is normal in $G$ and $G/P$ is cyclic.  
Indeed, if $P$ is normal and $G/P$ is cyclic, $\sigma(G)=|G|$ by
Theorem \ref{ThmCyclicFactorOfSylow}. Conversely, if $\sigma(G)=|G|$ and the
conjecture holds, $P$ is
normal in $G$, and then as $|G|=\sigma(G)\le
|P|\sigma(G/P)$ (Proposition \ref{propsigmanormsubgroup}), the result of Cziszter and Domokos, Proposition
\ref{propdcnonmodularsigma}, forces $G/P$ to be cyclic. 

Also note that whenever $G$ contains a $p$-nilpotent subquotient with non-normal
Sylow-$p$-subgroup, $\sigma(G)<|G|$ by our relative results. So for the proof
of the conjecture, a classification of groups not containing such a
$p$-nilpotent subquotient could be the key.\\

In \cite[Question~1]{KohlsKraft}, the authors ask the similar (and also still
open) question which finite groups satisfy $\bsep = |G|$? At least, as a consequence of
Theorem \ref{ThmCyclicFactorOfSylow} and Proposition \ref{DeltaLeSigmaLeBeta},
we can add  groups $G$ with normal Sylow-$p$-subgroup $P$ and $G/P$ cyclic to the list.\\

We conclude with some explicit examples:

\begin{eg}
Assume throughout characteristic $2$. As $S_{3}\cong Z_{3}\rtimes Z_{2}$,
$\sigma(S_{3})=3$ by Proposition \ref{PropZqZd}. More generally, for
$D_{2q}$, the dihedral group of order $2q$ and $q$ an odd prime, we have
$\sigma(D_{2q})=\sigma(Z_{q}\rtimes Z_{2})=q$ by that proposition. Also note
that $\beta_{\sep}(D_{2q})=q+1$ by \cite[Theorem 8]{KohlsSezerD2p}. So here we
have the strict inequalities
$\delta(D_{2q})=2<\sigma(D_{2q})=q<\beta_{\sep}(D_{2q})=q+1$. 
The group $A_{4}$ has the normal
Sylow-$p$-subgroup $P=\langle (12)(34),(14)(23)\rangle$ of order $4$, and its
factor group $A_{4}/P$ is cyclic of order $3$. Hence $\sigma(A_{4})=12$ by
Theorem \ref{ThmCyclicFactorOfSylow}. As
$A_{4}\le S_{4}$, we have $12=\sigma(A_{4})\le \sigma(S_{4})$ by Proposition
\ref{BoundForSubgroups}. Also as $S_{3}\le S_{4}$ we have
$\sigma(S_{4})\le\sigma(S_{3})[S_{4}:S_{3}]=3\cdot 4=12$ by Proposition
\ref{propsigmasubgroup}. This shows $\sigma(S_{4})=12$. 
\end{eg}

\begin{eg}
Assume throughout characteristic $3$. Then $\sigma(S_{3})=6$ by Theorem~\ref{ThmCyclicFactorOfSylow}.
Furthermore $\sigma(A_{4})=4$: The projective indecomposable representations of $A_{4}$ are
obtained by induction of the characters of the Klein four group $H=\langle
(12)(34),(13)(24)\rangle\le A_{4}$, which leads to a three-dimensional
representation. Its matrix group is either the regular representation of
$Z_{3}$, or conjugate to
\[
G=\tiny\langle
\left(\begin{array}{ccc}
0&0&1\\
1&0&0\\
0&1&0
\end{array}\right),\left(\begin{array}{ccc}
2&0&0\\
0&2&0\\
0&0&1
\end{array}\right)\rangle.
\]
Then a computation with \Magma \cite{Magma} yields that the corresponding invariant
ring is minimally generated by 
\[
    x_1^2 + x_2^2 + x_3^2,\quad
    x_1 x_2 x_3,\quad 
    x_1^4 + x_2^4 + x_3^4,\quad 
    x_1^4 x_2^2 + x_1^2 x_3^4 + x_2^4 x_3^2.
\]
It is easily seen that the first three invariants in that list minimally cut
out $0$, which shows the claim.
\end{eg}

It is also worth mentioning that in the non-modular characteristics
(i.e. $p\ne 2,3$), $\sigma(A_{4})=4$ by
\cite[Corollary~4.2]{CziszsterThesis}.

\begin{eg}
Assume throughout arbitrary characteristic $p>0$. If $G$ is a $p$-group, we
have $\sigma(G)=|G|$ either from Theorem~\ref{ThmCyclicFactorOfSylow} or
Proposition~\ref{DeltaLeSigmaLeBeta} and Theorem~\ref{thmdeltag}.
For the dihedral groups
$D_{2p^{n}}$ with $n\ge 0$, we have $\sigma(D_{2p^{n}})=2p^{n}$, since in
characteristic~$2$ it would be a $p$-group, and by Theorem
\ref{ThmCyclicFactorOfSylow} otherwise. This strengthens the corresponding result
on $\beta_{\sep}$, see \cite[Proposition 10]{KohlsKraft}.
\end{eg}

\begin{ack}
This paper was prepared during visits of the first author to TU M\"unchen and
the second author to University of Aberdeen. The second of these visits was supported by the Edinburgh Mathematical Society's Research Support Fund. We want to thank Gregor Kemper
and the Edinburgh Mathematical Society for making these visits possible. We
also thank K. Cziszter for some helpful communication via eMail.
\end{ack}

\bibliographystyle{plain}
\bibliography{MyBib}

\end{document}